\theoremstyle{plain}
\newtheorem{theorem}{Theorem}[section]
\newtheorem{lemma}[theorem]{Lemma}
\newtheorem{proposition}[theorem]{Proposition}
\theoremstyle{definition}
\newtheorem{definition}[theorem]{Definition}
\theoremstyle{remark}
\newtheorem{remark}[theorem]{Remark}
\numberwithin{equation}{section} 
\DeclareMathOperator*{\esssup}{ess\:sup}
\DeclareMathOperator{\trace}{Tr}
\newcommand{\vr}{\varrho}
\newcommand{\vu}{\textbf{\textup{u}}}
\newcommand{\vv}{\textbf{\textup{v}}}
\newcommand{\vt}{\vartheta}
\newcommand{\vm}{\textbf{m}}
\newcommand{\vq}{\textbf{q}}
\newcommand{\ve}{\varepsilon}
\newcommand{\vS}{\mathbb{S}}
\newcommand{\vU}{\textbf{\textup{U}}}
\newcommand{\vphi}{\varphi}
\newcommand{\bfphi}{\bm{\varphi}}
\newcommand{\vrex}{\widetilde{\varrho}_{\varepsilon}}
\newcommand{\vuex}{\widetilde{\textbf{\textup{u}}}_{\varepsilon}}
\newcommand{\vtex}{\widetilde{\vartheta}_{\varepsilon}}
\newcommand{\Div}{{\rm div}_x}
\newcommand{\Grad}{\nabla_x}
\newcommand{\DerTime}{\partial_t}
\newcommand{\dx}{\textup{d}x}
\newcommand{\dt}{\textup{d}t}
\def\softd{{\leavevmode\setbox1=\hbox{d}%
		\hbox to 1.05\wd1{d\kern-0.4ex{\char039}\hss}}}
\begin{document}
	\title{Low Mach number limit on perforated domains\\for the evolutionary Navier--Stokes--Fourier system}
	\author{Danica Basari\'{c} $^{\ast}$  \ and \ Nilasis Chaudhuri $^{\dagger}$}
	\date{}
	
	\address{$^{\ast}$ Dipartimento di Matematica, Politecnico di Milano \\
		Via E. Bonardi 9, 20133 Milano, Italy}
	\email{danica.basaric@polimi.it}

	\address{$^{\dagger}$ Faculty of Mathematics, Informatics and Mechanics, University of Warsaw, ul. Banacha 2, Warsaw 02-097, Poland}
	\email{nchaudhuri@mimuw.edu.pl}
	\keywords{}
	
	\thanks{$^{\ast}$ The work of D. B. was supported by the Czech Sciences Foundation (GA\v CR), Grant Agreement 21--02411S. The Institute of Mathematics of the Academy of Sciences of the Czech Republic is supported by RVO:67985840. The Department of Mathematics of Politecnico di Milano is supported by MUR “Excellence Department 2023-2027”}
	\thanks{$^{\dagger}$ The research of NC was supported by the EPSRC Early Career Fellowship  EP/V000586/1, and by the ``Excellence Initiative Research University (IDUB)" program at the University of Warsaw. }	
	\thanks{The authors wish to thank Prof. Eduard Feireisl for the helpful advice and discussions.}
	
	\keywords{Navier--Stokes--Fourier system; low Mach number limit; homogenization; Oberbeck-Boussinesq approximation}
	\subjclass[2010]{Primary: 35Q30; Secondary: 35B30, 76N10}
	\maketitle 
	
	\begin{abstract}
		We consider the Navier--Stokes--Fourier system describing the motion of a compressible, viscous and heat-conducting fluid on a domain perforated by tiny holes. First, we identify a class of dissipative solutions to the Oberbeck-Boussinesq approximation as a low Mach number limit of the primitive system. Secondly, by proving the weak--strong uniqueness principle, we obtain strong convergence to the target system on the lifespan of the strong solution. 
	\end{abstract}

	\section{Introduction}
	
	The aim of this work is to study the asymptotic analysis of the scaled Navier--Stokes--Fourier system  in a domain perforated with tiny holes. More precisely, we consider the physical situation corresponding to the \textit{low stratification} of a fluid, i.e. the equations describing the motion of a compressible viscous fluid are scaled by a small Mach number $\textup{Ma}=\ve^m$ and Froude number $\textup{Fr}= \ve^{m/2}$ for a fixed positive $m$; in addition, we suppose that the fluid is confined to a bounded spatial domain perforated by many holes, each of them properly contained in a ball of radius $\ve^{\alpha}$ and having mutual distance $\ve$ for some $\alpha>1$. We keep other characteristic numbers Strouhal number, Reynolds number and P\'eclet number as unity. Our goal consists in analyzing what happens when we let $\ve$ go to zero. \par 
	In the absence of holes, the problem reduces to a classical asymptotic analysis problem in a fixed domain, mainly the low Mach number limit, which is also referred to as the incompressible limit in the context of compressible systems in the literature. The first approach, proposed by Klainarman and Majda \cite{KlaMaj}, is based on classical or strong solutions of the compressible system and proves that the limit is an incompressible system. This approach has been followed by Alazard \cite{Ala} to analyze the low Mach number limit for the Navier--Stokes--Fourier system. On the other hand, based on global-in-time weak solutions, Lions and Masmoudi \cite{LioMas}, and Desjardins et al. \cite{DeGrLiMa} studied the low Mach number limit for the compressible Navier--Stokes system and they obtained the incompressible Navier--Stokes system as a limit. This approach has also been extended to the Navier--Stokes--Fourier system. We refer to the monograph of Feireisl and Novotn\'y \cite{FeiNov}, where different multiscale problems (like, $\textup{Ma} = \textup{Fr}$ and $\sqrt{\textup{Ma}} = \textup{Fr}$) are addressed. These multiple scalings explain the stratification of fluid. \par  
	On the other hand, for a fixed Mach and Froude number, the problem coincides with the homogenization problem for fluid dynamics, which aims to describe the macroscopic behavior of microscopically heterogeneous systems. In general, the limiting behavior depends on the size and mutual distance of holes, that is, the relation between the radius of holes $ \ve^\alpha $ and mutual distance $ \ve$. For incompressible stationary Stokes and Navier--Stokes problems with periodically distributed holes, in his seminal works Allaire (in \cite{All}, \cite{All2}, see also Tartar \cite{Tar}) proved that in the case of ``large'' holes, that is, $1\leq \alpha <3$, the limit system is governed by the Darcy law, while for ``tiny'' holes, that is, $\alpha >3$, the limit system remains the same as the original one. The critical case $\alpha=3$ leads to Brinkmann's law. Similar results hold in the context of evolutionary Stokes and incompressible Navier--Stokes systems, as shown by Mikeli\'c \cite{Mik} and Feireisl, Namlyeyeva, and Ne\v casov\'a \cite{FeiNamNec}. All of the above results are in three dimensions.\par 
	In the case of compressible fluids, the situation is more complex than its incompressible counterpart. For the barotropic Navier--Stokes system with Strouhal number proportional to $\ve^2$ and the diameter of the holes proportional to their mutual distance (i.e., ``large'' holes with $\alpha=1$), the problem was considered by Masmoudi \cite{Mas} who deduced that the limit system is the porous medium equation with the nonlinear Darcy's law. For the heat-conducting fluid (Navier--Stokes--Fourier) system with the same $\alpha$, Feireisl, Novotn\'y, and Takahashi \cite{FeiNovTak} achieved similar results. Recently, the case of tiny holes ($\alpha>3$) has been studied in several papers, and the limit problem was identified as the same as in the perforated domain in three dimensions. Along with the mutual distance and diameters of the holes, the results also depend on the adiabatic exponent $\gamma$. For the steady compressible Navier--Stokes equations, Feireisl and Lu \cite{FeiLu} considered $\gamma>3$, while Diening, Feireisl, and Lu \cite{DieFeiLu} considered $\gamma>2$. Lu and Schwarzacher \cite{LuSch} studied the evolutionary compressible Navier--Stokes equations and proved that the presence of tiny holes is negligible for $\gamma>6$, which was recently improved to $\gamma>3$ by Oschmann and Pokorn\'y \cite{OscPok}.  Lu and Pokorn\'{y} \cite{LuPok} proved that the size of holes is negligible in the context of the stationary Navier--Stokes--Fourier system, while the same result was achieved by Pokorn\'{y} and Sk\v{r}\'{i}\v{s}ovsk\'{y} \cite{PokSkr} for the evolutionary case, considering a pressure of the type $p(\vr, \vt)= \vr^{\gamma}+ \vr \vt + \vt^4$ with $\alpha>7$, and $\gamma>6$. Recently, Oschmann and Pokorn\'y \cite{OscPok} improved the above results for the evolutionary compressible Navier--Stokes and Navier--Stokes--Fourier systems to $\alpha>3$, and $\gamma>3$. For the Navier--Stokes system, the challenging situation with dimension two was considered by Ne\v casov\' a and Pan \cite{NecPan} for $\gamma>2$ and by Ne\v casov\' a and Oschmann \cite{NecOsc} for $\gamma>1$. Recently, Bella and Oschmann \cite{BelOsc} considered the case of randomly perforated domains with the random size of holes.\par
	For the low Mach number limit of the compressible Navier--Stokes equation in a perforated domain, H\"ofer, Kowalczyk and Schwarzacher \cite{HofKowSch} recover Darcy's law as a limit of the system by considering $4m>3(\gamma+2)(\alpha-1)$, where the adiabatic exponent $\gamma\geq 2$. Very recently, Bella, Feireisl, and Oschmann \cite{BelFeiOsc} proved that in the case of tiny holes ($ \alpha > 3$) and under the hypothesis $\frac{2m}{\gamma}>\alpha$ with the adiabatic exponent $\gamma> \frac{3}{2}$, weak solutions of the compressible Navier--Stokes equation converge to a dissipative solution of the incompressible Navier--Stokes system for well-prepared initial data. Eventually, the use of the weak-strong uniqueness property ensures the convergence of weak solutions of the primitive system towards the strong solution for the target system, at least in the interval of existence of the strong solution.\par 
	To the best of the authors' knowledge, this is the first time that the low Mach number limit and the homogenization of the spatial domain have been performed simultaneously for the Navier--Stokes--Fourier system, enabling the consideration of general forms for pressure. Following the idea proposed in previous work \cite{BelFeiOsc}, we consider the weak solution for the Navier--Stokes--Fourier system and take the limit as $\varepsilon \rightarrow 0$ to obtain a dissipative solution of the Oberbeck-Boussinesq system for well-prepared initial data. Subsequently, we apply the weak-strong uniqueness property to ensure convergence to the strong solution of the target system, at least in the interval of existence of the latter. The two main ingredients we use are based on the restriction operator constructed by Diening et al. \cite{DieFeiLu} and a suitable extension operator for state variables, mainly for temperature, as suggested by Lu and Pokorn\'y \cite{LuPok} in Sobolev spaces, and later extended by Pokorny and Sk\v r\'i\v sovsk\'y\cite{PokSkr} in time dependent Sobolev spaces. \par 
	It is worth mentioning that the analysis presented in this article is largely motivated by the work of \cite{FeiNov}, where the author successfully recovered the same system without any presence of holes. Additionally, they were able to get the weak solution of the target system by employing a suitable analysis of the acoustic equation to establish the convergence of the convective terms $\vr_{\ve} \vu_\ve \otimes \vu_\ve$ towards $\overline{\vr} \vu \otimes \vu$, with $\overline{ \vr}$ and $\vu$ being the limits of the densities $\vr_{ \ve}$ and velocities $\vu_{\ve}$, respectively.
	Meanwhile, our approach takes a slightly different path to navigate the challenges associated with the convective term. We achieve this by considering only the weak limit of $\vr_{\ve} \vu_\ve \otimes \vu_\ve$ in the weak formulation of the target system, ultimately resulting in obtaining solely a dissipative solution for the target system. Nevertheless, the presence of the holes makes the whole analysis more challenging and the same procedure developed in \cite{FeiNov} is hardly applicable in this context; for a slightly different setup, such limitation on the convergence of nonlinear term $\vr_\ve \vu_\ve \otimes \vu_\ve$ is evident from the work of Masmoudi \cite{Mas}, and in particular, from the non-trivial difficulties arising from the use of the restriction operator $\mathcal{R}_{\ve}$, defined in Proposition \ref{main result} below. Therefore, the convergence of the nonlinear term $\vu_\ve \otimes \vu_\ve$ to $\vu \otimes \vu$ may even fail after the lifetime of the strong solution, making the dissipative solution the best option one can hope to achieve globally in time. Finally, it is worth pointing out that in \cite[Chapter 5]{FeiNov}, the authors were able to recover the result even for ill-prepared initial data; however, weak solutions arising from these type of data may not satisfy the energy inequality, which plays a significant role in proving the weak-strong uniqueness of the Oberbeck-Boussinesq system. Thus, following \cite[Section 5.5.4]{FeiNov}, we restrict ourselves to the framework of \textit{well-prepared} initial data; see Section \ref{Well-prepared data} for more details.
	\par 
	
	\subsection{Primitive system}
	
	Let us consider the scaled Navier--Stokes--Fourier system with small Mach number $\textup{Ma}=\ve^m$ and Froude number $\textup{Fr}= \sqrt{\textup{Ma}}= \ve^{m/2}$, with the positive real number $m$ fixed; specifically, we will consider 
	\begin{align}
		\DerTime \vr + \Div (\vr \vu) &=0, \label{continuity equation}\\
		\DerTime (\vr \vu) + \Div (\vr \vu \otimes \vu) + \frac{1}{\ve^{2m}}\Grad p(\vr, \vt) &= \Div \vS (\vt, 	\Grad \vu) + \frac{1}{\ve^{m}} \vr \Grad G, \label{balance of momentum}\\
		\DerTime \big(\vr e(\vr, \vt)\big) + \Div \big(\vr e(\vr, \vt) \vu\big) + \Div \vq(\vt, \Grad \vt)&= \ve^{2m}\  	\vS (\vt, \Grad  \vu) : \Grad \vu - p(\vr, \vt) \Div \vu. \label{balance internal energy}
	\end{align}
	Here the unknown variables are the density $\vr=\vr(t,x)$, the velocity $\vu=\vu(t,x)$ and the absolute temperature $\vt= \vt(t,x)$ of the fluid, while the pressure $p=p(\vr, \vt)$ and the internal energy $e=e(\vr, \vt)$ are related to a third quantity, the entropy $s=s(\vr, \vt)$, through Gibb's relation
	\begin{equation} \label{Gibb's relation}
		\vt Ds = De + p D \left( \frac{1}{\vr} \right).
	\end{equation}
	Due to the aforementioned relation, equation \eqref{balance internal energy} can be  equivalently rewritten as
	\begin{equation*}
		\begin{aligned}
			\DerTime \big(\vr s(\vr, \vt)\big) &+ \Div \big(\vr s(\vr, \vt) \vu\big) + \Div \left(\frac{\vq(\vt, \Grad \vt)}{\vt}\right)\\
			&= \frac{1}{\vt}\left(\ve^{2m} \ \vS (\vt, \Grad  \vu) : \Grad \vu - \frac{\vq(\vt, \Grad \vt) \cdot \Grad \vt}{\vt}\right). 
		\end{aligned}
	\end{equation*}
	We suppose that the fluid is Newtonian, meaning that the viscous stress tensor $\vS= \vS (\vt, \Grad  \vu)$ is given by 
	\begin{equation} \label{viscous stress tensor}
		\vS(\vt, \Grad \vu) = \mu(\vt) \left( \Grad \vu + \Grad^{\top} \vu -\frac{2}{3} (\Div \vu) \mathbb{I} \right) + \eta(\vt) (\Div \vu) \mathbb{I},
	\end{equation}
	with the shear viscosity $\mu=\mu(\vt)$ and the bulk viscosity $\eta=\eta(\vt)$ coefficients depending on temperature. Similarly, we suppose that the heat flux $\textbf{q}=\vq(\vt, \Grad \vt)$ is determined by Fourier's law,
	\begin{equation} \label{fourier law}
		\textbf{q}(\vt, \nabla_x \vt) = -\kappa(\vt) \Grad \vt,
	\end{equation}
	with the heat conductivity coefficient $\kappa=\kappa(\vt)$. Finally, $G=G(x)$ is a given potential, usually identified with the gravitational one. 
	
	\subsection{Perforated domain}
	
	We study the scaled Navier--Stokes--Fourier system \eqref{continuity equation}--\eqref{fourier law} on $(0,T) \times \Omega_{\ve}$, where the time $T>0$ can be chosen arbitrarily large while $\Omega_{\ve}$ denotes a domain perforated with many obstacles. More precisely, given $\Omega, \textup{U}\subset \mathbb{R}^3$ two bounded $C^{2,\nu}$-domains, $0<\nu<1$, we assume
	\begin{equation} \label{perforated domain}
		\Omega_{\ve}:= \Omega \setminus \bigcup_{n=1}^{N(\ve)} \overline{\textup{U}}_{\ve,n},
	\end{equation}
	where $\{ \textup{U}_{\ve,n} \}_{n=1}^{N(\ve)}$ denotes the family of obstacles given by
		\begin{equation} \label{hole}
			\textup{U}_{\ve, n} := x_{\ve,n} + \ve^{\alpha} \textup{U};
		\end{equation}
		in particular, we suppose that $\textup{U}_{\ve,n} \subset \subset B_{\ve,n}$, with 
		$$B_{\ve,n}:= B(x_{\ve,n}, \ve^{\alpha})$$ 
		denoting the ball centred at $x_{n,\ve}$ and radius $\ve^{\alpha}$, $\alpha >1$. Furthermore, we suppose that the balls $\{ B_{\ve,n} \}_{n=1}^{N(\ve)}$ have mutual distance $\ve$. 
	Specifically, defining 
	\begin{equation*}
		D_{\ve,n}:=  B\left(x_{\ve,n}, \ve^{\alpha}+ \frac{1}{2}  \ve\right)
	\end{equation*}
	we require that the balls $D_{\ve,n}$  are mutually disjoint. The latter condition gives an upper limit on the number of holes as 
	\begin{equation} \label{number holes}
		N(\ve) \simeq  \frac{3}{4\pi} |\Omega| \left( \ve^{\alpha}+\frac{1}{2} \ve\right)^{-3} \lesssim \ve^{-3}.
	\end{equation}
	Note, however, that we do not assume any periodicity for the distribution of the holes.
	\begin{figure}[htp]
		\centering
		\includegraphics[width=5cm]{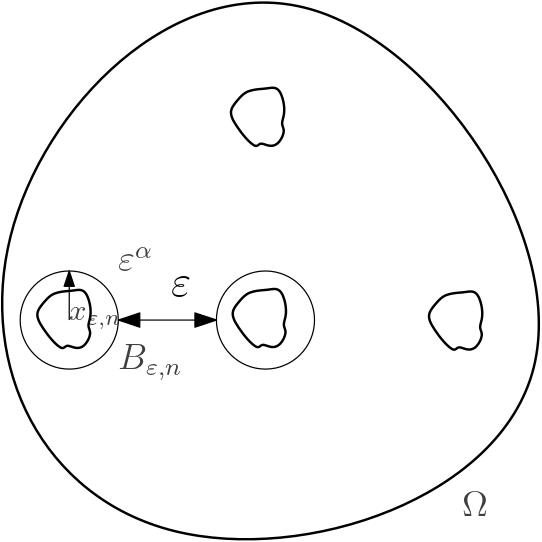}
		\caption{An example of perforated domain}
		\label{domain1}
	\end{figure}
	\begin{figure}[htp]
		\centering
		\includegraphics[width=12cm]{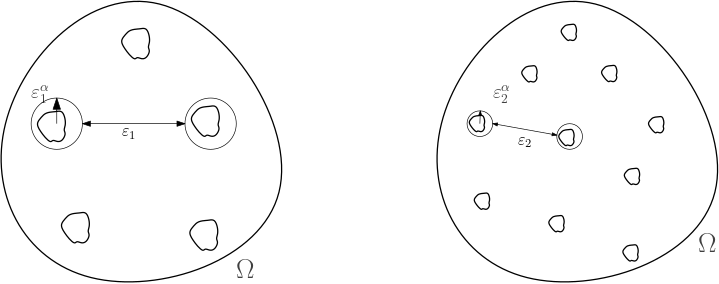}
		\caption{Perforated domains with $ \ve_1 > \ve_2$}
		\label{domain2}
	\end{figure}
	\par We consider the homogeneous Dirichlet and Neumann boundary conditions for the velocity $\vu$ and the temperature $\vt$, respectively; specifically,
	\begin{equation} \label{boundary conditions}
		\vu|_{\partial \Omega_{\ve}}=0, \quad \Grad\vt \cdot \textbf{n}|_{\partial \Omega_{\ve}}=0.
	\end{equation}

	\subsection{Constitutive relations}
	
	In order to motivate the existence of global-in-time weak solutions to system \eqref{continuity equation}--\eqref{fourier law} some extra assumptions are necessary. Motivated by \cite{FeiNov}, we assume 
	\begin{align}
		p(\vr, \vt) =p_{\textup{m}}(\vr,\vt)+ p_{\textup{rad}}(\vt), \quad &\mbox{with} \quad p_{\textup{m}}(\vr, \vt)= \vt^{\frac{5}{2}} P\left( \frac{\vr}{\vt^{\frac{3}{2}}} \right), \ p_{\textup{rad}}(\vt)=  \frac{a}{3} \vt^4, \label{pressure}\\
		e(\vr, \vt) =e_{\textup{m}}(\vr,\vt)+ e_{\textup{rad}}(\vr, \vt), \quad &\mbox{with} \quad e_{\textup{m}}(\vr, \vt)= \frac{3}{2}\frac{\vt^{\frac{5}{2}}}{\vr} P\left( \frac{\vr}{\vt^{\frac{3}{2}}} \right), \ e_{\textup{rad}}(\vr, \vt)=   \frac{a}{\vr} \vt^4, \label{internal energy}\\
		s(\vr, \vt) =s_{\textup{m}}(\vr,\vt)+ s_{\textup{rad}}(\vr, \vt), \quad &\mbox{with} \quad s_{\textup{m}}(\vr, \vt)= \mathcal{S} \left( \frac{\vr}{\vt^{\frac{3}{2}}} \right), \ s_{\textup{rad}}(\vr, \vt)=   \frac{4a}{3} \frac{\vt^3}{\vr}, \label{entropy}
	\end{align}
	where $a>0$, $P \in C^1[0,\infty) \cap C^3(0,\infty)$ satisfies
	\begin{equation} \label{capital P}
		P(0)=0, \quad P'(Z)>0 \mbox{ for } Z \geq 0, \quad 0< \frac{\frac{5}{3} P(Z)-P'(Z)Z}{Z} \leq c \mbox{ for } Z \geq 0, 
	\end{equation}
	and 
	\begin{equation} \label{capital s}
		\mathcal{S}'(Z) = - \frac{3}{2} \frac{\frac{5}{3} P(Z)-P'(Z)Z}{Z^2}.
	\end{equation}
	Consequently, the function $Z \mapsto P(Z)/ Z^{\frac{5}{3}}$ is decreasing and we assume
	\begin{equation} \label{constitutive relation}
		\lim_{Z \rightarrow \infty} \frac{P(Z)}{Z^{\frac{5}{3}}} = p_{\infty}>0.
	\end{equation} 
	Furthermore, we suppose that the transport coefficients $\mu$, $\eta$ and $\kappa$ are continuously differentiable functions of temperature $\vartheta$ satisfying
	\begin{align}
		0< \underline{\mu} (1+\vt) & \leq \mu(\vt) \leq \overline{\mu} (1+\vt), \label{shear condition}\\
		0 & \leq \eta(\vt) \leq \overline{\eta} (1+\vt), \label{bulk condition} \\
		0< \underline{\kappa} (1+\vt^3) & \leq \kappa(\vt) \leq \overline{\kappa} (1+\vt^3). \label{conductivity condition}
	\end{align}
	for all $\vt\geq 0$, with $\underline{\mu}, \overline{\mu}, \overline{\eta}, \underline{\kappa}, \overline{\kappa}$ positive constants. Finally, we suppose that the potential $G \in W^{1,\infty}(\Omega)$ has zero mean,
	\begin{equation} \label{zero mean potential}
		\int_{\Omega} G \ \dx =0.
	\end{equation}
	
	\subsection{Well-prepared initial data} \label{Well-prepared data}
	
	We suppose that
	\begin{equation} \label{initial data}
		\vr(0, \cdot) = \vr_{0,\ve}:= \overline{\vr} + \ve^m \vr_{0,\ve}^{(1)}, \quad \vu(0, \cdot) 
		= \vu_{0,\ve}, \quad \vt(0, \cdot) = \vt_{0,\ve}:= \overline{\vt} + \ve^m \vt_{0,\ve}^{(1)},
	\end{equation}
	where $\vr_{0,\ve}^{(1)}, \vu_{0,\ve}, \vt_{0,\ve}^{(1)}$ are measurable functions and $\overline{\vr}, \overline{\vt}$ are positive constants. Moreover, in order to get uniform bounds on $\Omega$ and to guarantee the extension of the field equations to the whole domain, we suppose that $[\vr_{0,\ve}^{(1)}, \vu_{0,\ve}, \vt_{0,\ve}^{(1)}]$ are extended by zero on $\Omega \setminus \Omega_{\ve}$; more precisely, we denote 
	\begin{equation}
		\widetilde{\vr}_{0,\ve}^{(1)}:= \begin{cases}
			\vr_{0,\ve}^{(1)} &\mbox{in } \Omega_{\ve}, \\
			0 &\mbox{in } \Omega \setminus \Omega_{\ve}, 
		\end{cases} \quad 
		\widetilde{\vu}_{0,\ve}:= \begin{cases}
			\vu_{0,\ve} &\mbox{in } \Omega_{\ve}, \\
			\textbf{0} &\mbox{in } \Omega \setminus \Omega_{\ve}, 
		\end{cases} \quad 
		\widetilde{\vt}_{0,\ve}^{(1)}:= \begin{cases}
			\vt_{0,\ve}^{(1)} &\mbox{in } \Omega_{\ve}, \\
			0 &\mbox{in } \Omega \setminus \Omega_{\ve},
		\end{cases} 
	\end{equation}
	and
	\begin{equation*}
		[ \widehat{\vr}_{0,\ve}, \widehat{\vt}_{0,\ve}] := [\overline{\vr}, \overline{\vt}] + \ve^m [\widetilde{\vr}_{0,\ve}^{(1)}, \widetilde{\vt}_{0,\ve}^{(1)}].
	\end{equation*}
	
	In addition, we suppose that
	\begin{equation} \label{integral initial density and temperature}
		\int_{\Omega} \widetilde{\vr}_{0, \ve}^{(1)} \ \dx=\int_{\Omega} \widetilde{\vt}_{0, \ve}^{(1)} \ \dx =0 \quad \mbox{for all } \ve>0,
	\end{equation}
	and 
	\begin{align}
		\widetilde{\vr}_{0,\ve}^{(1)} &\to \vr_{0}^{(1)} \quad \mbox{weakly-}(*) \mbox{ in } L^{\infty}(\Omega) \mbox{ and a.e. in } \Omega, \label{i1}\\
		\widetilde{\vu}_{0,\ve} &\to \vu_{0} \hspace{0.55cm} \mbox{weakly-}(*) \mbox{ in } L^{\infty}(\Omega; \mathbb{R}^3) \mbox{ and a.e. in } \Omega, \label{i2}\\
		\widetilde{\vt}_{0,\ve}^{(1)} &\to \vt_{0}^{(1)} \quad \mbox{weakly-}(*) \mbox{ in } L^{\infty}(\Omega) \mbox{ and a.e. in } \Omega; \label{convergence initial temperatures}
	\end{align}
	moreover, in order to get the maximal regularity for the dissipative solution of the target system, we suppose
	\begin{equation} \label{Slobodeckii condition}
		\widetilde{\vt}_0^{(1)} \in W^{2-\frac{2}{p},p}(\Omega) \quad \mbox{with} \quad p=\frac{5}{4}.
	\end{equation}
	
	Finally, we suppose that limiting initial data $\vr_{0}^{(1)}, \vt_{0}^{(1)}$ are \textit{well-prepared}, meaning that they satisfy the following relation:
	\begin{equation} \label{well-prepared data}
		\frac{\partial p(\overline{\vr}, \overline{\vt})}{\partial \vr} \vr_{0}^{(1)} + \frac{\partial p(\overline{\vr}, \overline{\vt})}{\partial \vt} \vt_{0}^{(1)} = \overline{\vr}G.
	\end{equation}
	We restrict ourselves to the consideration of well-prepared data only. The problem is more interesting for ill-prepared data, where the presence of acoustic waves play an important role in the analysis of singular limits. We wish to consider it in our future works. 
	
	\subsection{Target system}
	
	Our goal is to show that the low Mach number asymptotic limit on a perforated domain leads to the \textit{Oberbeck-Boussinesq approximation} 
	\begin{align}
		\Div \vU &=0, \label{OB 1} \\
		\overline{\vr} \left[ \DerTime \vU + (\vU \cdot \Grad) \vU \right] + \Grad \Pi - \mu(\overline{ \vt}) \Delta_x \vU&= -A \Theta \Grad G, \label{OB 3}\\
		\overline{ \vr} c_p \left[ \DerTime \Theta + \vU \cdot \Grad \Theta  \right] -  \kappa(\overline{ \vt}) \Delta_x \Theta &=  \overline{ \vt} A \Grad G \cdot \vU \label{OB 2}
	\end{align}
	on the homogenized domain. Here,  $\overline{ \vr}, \overline{ \vt}$ are the positive constants introduced in Section \ref{Well-prepared data} while the positive constant $A$ is defined as 
	\begin{equation} \label{OB 4}
		A:= \overline{ \vr} \ a(\overline{\vr}, \overline{\vt}),
	\end{equation}
	where $a$ denotes the coefficient of thermal extension given by 
	\begin{equation}
		a(\vr, \vt) := \frac{1}{\vr} \frac{\partial_{\vt} p}{\partial_{\vr}p}(\vr, \vt),
	\end{equation}
	and $c_p$ is the specific heat at constant pressure evaluated in $(\overline{ \vr}, \overline{ \vt})$,
	\begin{equation} 
		c_p := \frac{\partial e}{\partial \vt}(\overline{ \vr}, \overline{ \vt}) + a(\overline{ \vr}, \overline{ \vt}) \frac{\overline{\vt}}{\overline{ \vr}} \frac{\partial p}{\partial \vt} (\overline{ \vr}, \overline{ \vt}).
	\end{equation}
	Moreover, the functions $[\vU, \Theta]$ inherit the same boundary conditions of $[\vu, \theta^{(1)}]$; more precisely, we suppose 
	\begin{equation} \label{OB 7}
		\vU|_{\partial \Omega}=0, \quad \Grad \Theta \cdot \textbf{n}|_{\partial \Omega}=0.
	\end{equation}
	
	\begin{remark}
		We point out that if the couple $[\vU, \Theta]$ is a strong solution of system \eqref{OB 1}--\eqref{OB 7}, it is easy to check that $\Div \Grad^{\top} \vU= \Grad \Div \vU=0$.  Therefore, the viscosity term appearing in \eqref{OB 3} can be equivalently written as 
		\begin{equation*}
			\mu(\overline{ \vt}) \Div (\Grad \vU + \Grad^{\top} \vU);
		\end{equation*}
		the latter will be preferred when introducing the concept of dissipative solution, cf. Definition \ref{dissipative solution OB} below. 
	\end{remark}
	
	\subsection{Notation}
	
	To avoid confusion, we fix the notation that will be used throughout the paper. 
	
	Given two positive quantities $A, B$, we write
	\begin{itemize}
		\item $A \simeq B$ if there exist positive constants $c_1, c_2$ such that $c_1 A \leq B \leq c_2 A$;
		\item $A \lesssim B$ if there exists a positive constant $c$ such that $A \leq c B$.
	\end{itemize}
	
	Moreover, given $Q\subseteq \mathbb{R}^N$, $N\geq 1$, an open set, $X$ a Banach space and $M\geq 1$, we denote with
	\begin{itemize}
		\item $\mathcal{D}(Q; X)=C^{\infty}_c(Q; X)$ the space of functions belonging to $C^{\infty}(Q; X)$ and having compact support in $Q$;
		\item $\mathcal{D}'(Q; \mathbb{R}^M)= [C^{\infty}_c(Q; \mathbb{R}^M)]^*$ the space of distributions;
		\item $\mathcal{M}(Q; \mathbb{R}^M)= \big[\overline{C_c(Q; \mathbb{R}^M)}^{\| \cdot \|_{\infty}} \big]^*$ the space of vector-valued Radon measures. If $\Omega \subset \mathbb{R}^N$ is a bounded domain, then $\mathcal{M}(\overline{\Omega})= [C(\overline{\Omega})]^*$.
		\item $\mathcal{M}^+(Q)$ the space of positive Radon measures;
		\item $\mathcal{M}^+(Q; \mathbb{R}^{N\times N}_{\textup{sym}})$ the space of tensor--valued Radon measures $\mathfrak{R}$ such that $\mathfrak{R}: (\xi \otimes \xi) \in \mathcal{M}^+(Q)$ for all $\xi \in \mathbb{R}^d$, and with components $\mathfrak{R}_{i,j}=\mathfrak{R}_{j,i}$;
		\item $L^p(Q;X)$, with $1\leq p\leq \infty$, the Lebesgue space defined on $Q$ and ranging in $X$;
		\item $W^{k,p}(Q; \mathbb{R}^M)$, with $1\leq p\leq \infty$ and $k$ a positive integer, the Sobolev space defined on $Q$;
		\item $W^{s,p}(Q; \mathbb{R}^M)$, with $1\leq p\leq \infty$ and $s \in (0,1)$, the Sobolev-Slobodeckii space defined on $Q$.
	\end{itemize}
	
	\subsection*{Structure of the paper}
	
	The plan for the paper is as follows.
	\begin{itemize}
		\item In Section \ref{Concepts of solution and main result}, we recall the definition of weak solution for the Navier--Stokes--Fourier system, cf. Definition \ref{definition weak solution}, and provide the definition of dissipative solution for the Oberbeck-Boussinesq system, cf. Definition \ref{dissipative solution OB}. Subsequently, we state our main result, cf. Theorem \ref{main theorem}. 
		\item Section \ref{Preparation} is devoted to the extension of the state variables defined on the perforated domain $ \Omega_{\ve} $ to the whole domain $ \Omega $, and to the derivation of all the necessary uniform estimates.
		\item In Section \ref{Field equations on the homogenized domain}, we extend the validity of the field equations to the homogenized domain $ \Omega $.
		\item Section \ref{Convergence} is dedicated to the limit passage, leading to the concept of dissipative solution for the target system, cf. Proposition \ref{First result}.
		\item In Section \ref{Weak--strong uniqueness principle}, we prove the weak-strong uniqueness principle for the Oberbeck-Boussinesq system, cf. Theorem \ref{Weak-strong uniqueness}.
	\end{itemize}
	
	\section{Concepts of solution and main result} \label{Concepts of solution and main result}
	
	\subsection{Weak solution}
	
	We start providing the definition of weak solution to the Navier--Stokes-Fourier system, whose global-in-time existence was proved in \cite[Theorem 3.1]{FeiNov}.
	
	\begin{definition}[Weak solution of the Navier--Stokes--Fourier system on perforated domains] \label{definition weak solution}
		Let $\Omega\subset \mathbb{R}^3$ be a bounded $C^{2,\nu}$-domain. Moreover, let the thermodynamic variables $p$, $e$, $s$ satisfy hypotheses \eqref{pressure}--\eqref{constitutive relation} and the transport coefficients $\mu, \eta, \kappa$ satisfy conditions \eqref{shear condition}--\eqref{conductivity condition}. For any fixed $\ve >0$, we say that the trio of functions $[\vr_{ \ve}, \vu_{\ve}, \vt_{\ve}]$ such that
		\begin{align*}
			\vr_{\ve} &\in C_{\rm weak}([0,T]; L^{\frac{5}{3}}(\Omega_{\ve})), \\
			\vu_{\ve} &\in L^2(0,T; W_0^{1,2}(\Omega_{\ve}; \mathbb{R}^3)), \\
			\vr_\ve \vu_\ve &  \in  C_{\rm weak}([0,T]; L^{\frac{5}{4}}(\Omega_{\ve}; \mathbb{R}^3)),\\
			\vt_{\ve}  &\in L^{\infty}(0,T; L^4(\Omega_{\ve})),\\
			(\log \vt_{\ve}, \vt_{\ve}^{\beta}) & \in L^2(0,T; W^{1,2}(\Omega_{\ve}; \mathbb{R}^2)) \quad \mbox{for any } 1\leq \beta \leq \frac{3}{2},
		\end{align*}
		
		is a \emph{weak solution} of the scaled Navier--Stokes--Fourier system \eqref{continuity equation}--\eqref{fourier law} in $(0,T) \times \Omega_{\ve}$, where $\Omega_{\ve}$ is the perforated domain given by \eqref{perforated domain}, \eqref{hole}, with the boundary conditions \eqref{boundary conditions} and initial data \eqref{initial data} if the following holds.
		\begin{itemize}
			\item[(i)] \textit{Weak formulation of the continuity equation.} The integral identity
			\begin{equation} \label{WF1}
				\left[\int_{\Omega_{\ve}} \vr_{\ve} \varphi(t, \cdot) \ \dx \right]_{t=0}^{t=\tau}= \int_{0}^{\tau} \int_{\Omega_{\ve}} [\vr_{\ve} \DerTime \vphi + \vr_{\ve} \vu_{\ve} \cdot \Grad \vphi] \ \dx \dt
			\end{equation}
			holds for any $\tau \in [0,T]$ and any $\vphi \in C^1([0,T] \times \overline{\Omega}_{\ve})$, with 
			\begin{equation*}
				\vr_{ \ve}(0,\cdot)=\vr_{ 0,\ve} \quad \mbox{a.e. in } \Omega_{\ve}.
			\end{equation*}
			\item[(ii)] \textit{Weak formulation of the renormalized continuity equation.} For any function
			\begin{equation*}
				b \in C^1[0, \infty),\ b' \in C_c[0, \infty)
			\end{equation*}
			the integral identity
			\begin{equation} \label{WF2}
				\begin{aligned}
					\left[\int_{\Omega_{\ve}} b(\vr_{\ve}) \vphi(t,\cdot) \dx\right]_{t=0}^{t=\tau}&= \int_{0}^{\tau} \int_{\Omega_{\ve}} \big[b(\vr_{\ve}) \DerTime\vphi+ b(\vr_{\ve})\vu_{\ve} \cdot \Grad \vphi  \big] \ \dx \textup{d}t \\
					& + \int_{0}^{\tau} \int_{\Omega_{\ve}}  \vphi \ \big( b(\vr_{\ve}) -b'(\vr_{\ve}) \vr_{\ve}  \big)\Div \vu_{\ve} \ \dx \dt
				\end{aligned}
			\end{equation}
			holds for any $\tau \in [0,T]$ and any $\vphi \in C^1([0,T] \times \overline{\Omega}_{\ve})$.
			\item[(iii)] \textit{Weak formulation of the momentum equation.} The integral identity
			\begin{equation} \label{WF3}
				\begin{aligned}
					\left[\int_{\Omega_{\ve}} \vr_{\ve}\vu_{\ve} \cdot \bfphi (t,\cdot) \dx\right]_{t=0}^{t=\tau} &= \int_{0}^{\tau} \int_{\Omega_{\ve}} \left[\vr_{\ve} \vu_{\ve} \cdot \DerTime \bfphi + [(\vr_{\ve} \vu_{\ve} \otimes \vu_{\ve}) -\mathbb{S}(\vt_{\ve}, \Grad \vu_{\ve})] : \Grad \bfphi\right] \dx \dt \\
					&+\frac{1}{\ve^{m}} \int_{0}^{\tau} \int_{\Omega_{\ve}} \left( \frac{1}{\ve^{m}} p(\vr_{\ve}, \vt_{\ve})\Div \bfphi + \vr_{\ve} \Grad G \cdot \bfphi \right) \dx \dt
				\end{aligned}
			\end{equation}
			holds for any $\tau \in [0,T]$ and any $\bfphi \in C^1( [0,T] \times \overline{\Omega}_{\ve}; \mathbb{R}^3)$, $\bfphi|_{\partial \Omega_{\ve}}=0$, with
			\begin{equation*}
				(\vr_{\ve} \vu_{\ve})(0,\cdot)= \vr_{0, \ve}\vu_{0,\ve} \quad \mbox{a.e. in } \Omega_{\ve}.
			\end{equation*}
			\item[(iv)] \textit{Weak formulation of the entropy equality.} There exists a non-negative measure
			\begin{equation*}
				\mathfrak{S}_{\ve} \in \mathcal{M} ([0,T] \times \overline{\Omega}_{\ve}),
			\end{equation*}
			such that the integral identity
			\begin{equation} \label{WF4}
				\begin{aligned}
					- \int_{\Omega_{\ve}} &\vr_{0,\ve} s(\vr_{0,\ve}, \vt_{0,\ve}) \vphi (0,\cdot) \dx \\
					& = \int_{0}^{T} \int_{\Omega_{\ve}} \left[\vr_{\ve} s(\vr_{\ve}, \vt_{\ve}) \big( \DerTime \vphi + \vu_{\ve} \cdot \nabla_x \vphi \big) - \frac{\kappa(\vt_{\ve})}{\vt_{\ve}} \Grad \vt_{\ve} \cdot \nabla_x \vphi\right] \dx\dt \\
					& +  \ve^{2m} \int_{0}^{T} \int_{\Omega_{\ve}} \frac{ \vphi}{\vt_{\ve}} \left[  \vS(\vt_{\ve}, \Grad \vu_{\ve}): \nabla_x \vu_{\ve} + \frac{1}{\ve^{2m}} \frac{\kappa(\vt_{\ve})}{\vt_{\ve}} |\Grad \vt_{\ve}|^2 \right] \dx\dt + \int_{0}^{T} \int_{\overline{\Omega}_{\ve}} \varphi \ \textup{d}\mathfrak{S}_{\ve},
				\end{aligned}
			\end{equation}
			holds for any $\vphi \in C_c^1([0,T) \times \overline{\Omega}_{\ve})$.
			\item[(v)] \textit{Energy equality.} The integral identity 
			\begin{equation} \label{WF5}
				\begin{aligned}
					\int_{\Omega_{\ve}} &\left( \frac{\ve^{2m}}{2} \vr_{\ve} |\vu_{\ve}|^2 + \vr_{\ve} e(\vr_{ \ve}, \vt_{\ve}) - \ve^{m} \vr_{\ve} G\right) (t,\cdot) \ \dx \\
					&=  \int_{\Omega_{\ve}} \left( \frac{\ve^{2m}}{2} \vr_{0,\ve} |\vu_{0,\ve}|^2 + \vr_{0,\ve} e(\vr_{ 0,\ve}, \vt_{0,\ve}) - \ve^{m} \vr_{0,\ve} G\right)  \dx
				\end{aligned}
			\end{equation}
			holds for a.e. $t\in (0,T)$.
		\end{itemize}
	\end{definition}
	
	\begin{remark}
		Even if we are dealing with functions defined only almost everywhere on $(0,T)$, the left-hand sides of equations \eqref{WF1}--\eqref{WF3} are well-defined since the density $\vr_{\ve}$ and the momentum $\vm_{\ve}=\vr_{\ve} \vu_{\ve}$ are weakly continuous in time.
	\end{remark}
	
		\begin{remark}
			We point out that in \cite{FeiNov} the authors introduce in the weak formulation of the entropy equality a positive measure $\sigma_{\varepsilon}$ such that 
			\begin{equation*}
				\sigma_{\varepsilon} \geq \frac{1}{\vartheta_{\varepsilon}} \left[ \ve^{2m} \ \vS(\vt_{\ve}, \Grad \vu_{\ve}): \nabla_x \vu_{\ve} + \frac{\kappa(\vt_{\ve})}{\vt_{\ve}} |\Grad \vt_{\ve}|^2 \right],
			\end{equation*}
			cf. \cite[Theorem 5.1]{FeiNov}. We have simply summed and subtracted the integral 
			\begin{equation*}
				\int_{0}^{T} \int_{\Omega_{\ve}} \frac{ \vphi}{\vt_{\ve}} \left[ \ve^{2m} \  \vS(\vt_{\ve}, \Grad \vu_{\ve}): \nabla_x \vu_{\ve} + \frac{\kappa(\vt_{\ve})}{\vt_{\ve}} |\Grad \vt_{\ve}|^2 \right] \dx\dt
			\end{equation*}
			in \eqref{WF4}, so that in our context the latter contains a new measure $\mathfrak{S}_{\ve}$ such that
			\begin{equation*}
				\mathfrak{S}_{\ve}:= \sigma_{\varepsilon}-  \frac{1}{\vartheta_{\varepsilon}} \left[ \ve^{2m} \ \vS(\vt_{\ve}, \Grad \vu_{\ve}): \nabla_x \vu_{\ve} + \frac{\kappa(\vt_{\ve})}{\vt_{\ve}} |\Grad \vt_{\ve}|^2 \right] \geq 0.
			\end{equation*}
		\end{remark}
	
	\begin{remark}
		Defining the \textit{Helmholtz function} $H_{\overline{\vt}}= H_{\overline{\vt}}(\vr, \vt)$ as 
		\begin{equation*}
			H_{\overline{\vt}}(\vr, \vt) := \vr \big( e(\vr, \vt)-\overline{\vt}s(\vr, \vt) \big),
		\end{equation*}
		combining \eqref{integral initial density and temperature}, \eqref{WF4} and \eqref{WF5}, it is easy to show that the integral equality
		\begin{equation} \label{energy inequality with Helmholtz}
			\begin{aligned}
				\int_{\Omega_{\ve}} &\left[ \frac{1}{2} \vr_{\ve}|\vu_{\ve}|^2 + \frac{1}{\ve^{2m}} \left( H_{\overline{\vt}}(\vr_{\ve},\vt_{\ve})- (\vr_{\ve}-\overline{\vr}) \frac{\partial H_{\overline{\vt}}(\overline{\vr}, \overline{\vt})}{\partial \vr} -H_{\overline{\vt}}(\overline{\vr}, \overline{\vt}) \right) - \frac{\vr_{\ve}-\overline{\vr}}{\ve^m}  G \right] (\tau,\cdot) \ \dx \\
				+ &\int_{0}^{\tau} \int_{\Omega_{\ve}} \frac{\overline{\vt}}{\vt_{\ve}} \left( \vS(\vt_{\ve}, \Grad \vu_{\ve}): \nabla_x \vu_{\ve} + \frac{1}{\ve^{2m}}  \frac{\kappa(\vt_{\ve})}{\vt_{\ve}} |\Grad \vt_{\ve}|^2 \right) \dx\dt + \frac{\overline{\vt}}{\ve^{2m}} \mathfrak{S}_{\ve} ([0,\tau] \times \overline{\Omega}_{\ve}) \\
				= \int_{\Omega_{\ve}} &\left[ \frac{1}{2} \vr_{0,\ve}|\vu_{0,\ve}|^2 + \frac{1}{\ve^{2m}} \left( H_{\overline{\vt}}(\vr_{0,\ve},\vt_{0,\ve})- (\vr_{0,\ve}-\overline{\vr}) \frac{\partial H_{\overline{\vt}}(\overline{\vr}, \overline{\vt})}{\partial \vr} -H_{\overline{\vt}}(\overline{\vr}, \overline{\vt}) \right)  - \frac{\vr_{0,\ve}-\overline{\vr}}{\ve^m}  G \right] \dx 
			\end{aligned}
		\end{equation}
		holds for a.e. $\tau \in (0,T)$, where $\overline{\vr}, \overline{\vt}$ are the positive constants appearing in the definition of the initial density and temperature in \eqref{initial data}. 
	\end{remark}
	
	\subsection{Dissipative solution}
	
	Inspired by \cite{AbbFei1}, we will refer to the concept of dissipative solutions, i.e. solutions that satisfy the target system in the weak sense but with extra defect terms appearing in the equations and in the energy inequality. The motivation of the following definition will be clarified in the proof of Proposition \ref{First result}, when performing the passage to the limit.
	
	\begin{definition}[Dissipative solution of the Oberbeck-Boussinesq system] \label{dissipative solution OB}
		Let $\Omega\subset \mathbb{R}^3$ be a bounded $C^{2,\nu}$-domain. We say that the couple of functions
		\begin{equation} \label{regularity class}
			\begin{aligned}
				\vu  &\in C_{\rm weak}([0,T]; L^2(\Omega; \mathbb{R}^3)) \cap L^2(0,T; W^{1,2}_0(\Omega; \mathbb{R}^3)), \\
				\vt^{(1)}  &\in C([0,T]; W^{2-\frac{2}{p},p}(\Omega)) \cap W^{1,p}(0,T; L^p(\Omega)) \cap L^p(0,T; W^{2,p}(\Omega)), \quad  p=\frac{5}{4}
			\end{aligned}
		\end{equation}
		is a \textit{dissipative solution} of the Oberbeck-Boussinesq system \eqref{OB 1}--\eqref{OB 7} in $[0,T] \times \Omega$ with initial data $[\vu_0, \vt_{0}^{(1)}]$ if the following holds. 
		\begin{itemize}
			\item[(i)] \textit{Incompressibility.} Equation \eqref{OB 1} holds a.e. on $(0,T) \times \Omega$ for $\vU=\vu$.
			\item[(ii)] \textit{Incompressible Navier--Stokes system.} There exists a positive measure 
			\begin{equation*}
				\mathfrak{R} \in L^{\infty}(0,T; \mathcal{M}^{+}(\overline{\Omega}; \mathbb{R}^{3\times 3}_{\rm sym}))
			\end{equation*}
			such that the integral identity
			\begin{equation} \label{DF OB 1}
				\begin{aligned}
					\overline{\vr} \left[\int_{\Omega} \vu \cdot \bfphi (t,\cdot) \dx\right]_{t=0}^{t=\tau} &= \overline{\vr}\int_{0}^{\tau} \int_{\Omega} \left[\vu\cdot \DerTime \bfphi -  (\vu \cdot \Grad) \vu \cdot  \bfphi \right] \dx \dt \\
					&- \int_{0}^{\tau} \int_{\Omega} \left[ \mu(\overline{ \vt}) (\Grad \vu + \Grad^{\top} \vu) : \Grad \bm{\varphi} +A\vt^{(1)}\Grad G \cdot \bfphi \right] \dx \dt \\
					&+ \int_{0}^{\tau} \int_{\overline{\Omega}} \Grad \bm{\varphi}: \textup{d} \mathfrak{R} \ \dt
				\end{aligned}
			\end{equation}
			holds for any $\tau \in [0,T]$ and any $\bm{\varphi} \in C^1 ([0,T] \times \overline{\Omega}; \mathbb{R}^3)$, $\bm{\varphi}|_{\partial \Omega}=0$ such that $\Div \bm{\varphi}=0$, with
			\begin{equation*}
				\vu(0,\cdot)=\vu_{0} \quad \mbox{a.e. in } \Omega.
			\end{equation*}
			\item[(iii)] \textit{Heat equation with insulated boundary}. Equation \eqref{OB 2} holds a.e. on $(0,T) \times \Omega$ for $\Theta= \vt^{(1)}$, $\vU=\vu$, with $\Grad \vt^{(1)} \cdot \textbf{n}|_{\partial \Omega}=0$ in the sense of traces and $\vt^{(1)}(0,\cdot)= \vt_0^{(1)}$ a.e. in $\Omega$.
			\item[(iv)] \textit{Energy inequality.} There exists a positive measure 
			\begin{equation*}
				\mathfrak{E} \in L^{\infty}(0,T; \mathcal{M}^+(\overline{\Omega}))
			\end{equation*}
			such that the integral inequality 
			\begin{equation} \label{DF OB 2}
				\begin{aligned}
					&\int_{\Omega} \left( \frac{1}{2} \overline{ \vr}|\vu|^2+ \frac{c_p}{2} \frac{\overline{ \vr}}{\overline{ \vt}}  \big| \vt^{(1)} \big|^2  \right) (\tau, \cdot) \dx + \int_{\Omega} \textup{d} \mathfrak{E}(\tau) \\
					+ \frac{\mu(\overline{ \vt})}{2} &\int_{0}^{\tau}  \int_{\Omega}|\Grad \vu+ \Grad^{\top} \vu |^2 \ \dx \dt + \frac{\kappa(\overline{\vt})}{\overline{ \vt}} \int_{0}^{\tau}  \int_{\Omega}   |\Grad \vt^{(1)} |^2 \ \dx \dt \\
					\leq &\int_{\Omega} \left( \frac{1}{2} \overline{ \vr}|\vu_{0}|^2+ \frac{c_p}{2} \frac{\overline{ \vr}}{\overline{ \vt}}  \big| \vt_0^{(1)} \big|^2 \right) \dx
				\end{aligned}
			\end{equation}
			holds for a.e. $\tau \in (0,T)$.
			\item[(v)] \textit{Compatibility condition.} There holds
			\begin{equation} \label{compatibility condition}
				\trace [\mathfrak{R}] \simeq \mathfrak{E}.
			\end{equation}
		\end{itemize}
	\end{definition}
	
	\subsection{Main result}
	
	Having collected all the necessary ingredients, we are now ready to state our main result.
	
	\begin{theorem} \label{main theorem}
		Let 
		\begin{itemize}
			\item[-] the constants $\alpha$ and $m$ be fixed such that 
			\begin{equation} \label{hypothesis parameters}
				3< \alpha < m;
			\end{equation}
			\item[-] $\Omega\subset \mathbb{R}^3$ be a bounded $C^{2,\nu}$-domain and $\{\Omega_{\ve}\}_{\ve >0}$ be a family of perforated domains defined by \eqref{perforated domain}, \eqref{hole}; 
			\item[-] the thermodynamic variables $p$, $e$, $s$ satisfy hypotheses \eqref{pressure}--\eqref{constitutive relation}; 
			\item[-] the transport coefficients $\mu, \ \eta, \ \kappa$ satisfy conditions \eqref{shear condition}--\eqref{conductivity condition}; 
			\item[-] the potential $G$ have zero mean \eqref{zero mean potential}; 
			\item[-] $\{[\vr_{ 0,\ve}, \vu_{0,\ve}, \vt_{0, \ve}]\}_{\ve >0}$ be a family of initial data satisfying conditions \eqref{initial data}--\eqref{well-prepared data}.
		\end{itemize}
		
		Moreover, let 
		\begin{itemize}
			\item[-] $\{ [\vr_{ \ve}, \vu_\ve, \vt_{ \ve}] \}_{\ve >0}$ be the family of weak solutions to the scaled Navier--Stokes--Fourier system on the perforated domains, emanating from $\{[\vr_{ 0,\ve}, \vu_{0,\ve}, \vt_{0, \ve}]\}_{\ve >0}$ in the sense of Definition \ref{definition weak solution}; 
			\item[-] $\{[ \widehat{\vr}_{\ve}, \vuex, \widehat{\vt}_{\ve}]\}_{\ve >0}$ be the family of their extensions to the homogenized domain $\Omega$, specified in Section \ref{Extensions} below. 
		\end{itemize}
		
		Then there exists a positive time $T^*$ such that, passing to suitable subsequences as the case may be,
		\begin{align}
			\vuex \rightharpoonup \vU \quad &\mbox{in } L^2(0, T^*; W^{1,2}_0(\Omega; \mathbb{R}^3)), \label{m1}\\
			\frac{ \widehat{\vt}_{\ve}-\overline{ \vt}}{\ve^m} \rightharpoonup \Theta \quad &\mbox{in } L^2(0,T^*; W^{1,2}(\Omega)), \label{m2}
		\end{align}
		where $[\vU, \Theta]$ is the strong solution to the Oberbeck-Boussinesq system emanating from $[\vU_0, \Theta_0]= [\vu_0, \vt_{0}^{(1)}]$, with $\vu_0, \vt_{0}^{(1)}$ the weak limits appearing in \eqref{i2}, \eqref{convergence initial temperatures}, respectively.
	\end{theorem}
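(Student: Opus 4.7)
The strategy decomposes into two steps, in the spirit of the framework of \cite{BelFeiOsc}. In the first step, I would identify every weak limit $[\vU, \Theta]$ of the subsequences in \eqref{m1}--\eqref{m2} as a dissipative solution of the Oberbeck-Boussinesq system on $[0,T] \times \Omega$ in the sense of Definition \ref{dissipative solution OB}; this is the content of Proposition \ref{First result} referenced in the outline. In the second step, I would invoke local-in-time existence of a strong solution $[\vU^\ast,\Theta^\ast]$ emanating from $[\vu_0,\vt_0^{(1)}]$ together with the weak-strong uniqueness principle (Theorem \ref{Weak-strong uniqueness}) to conclude that $[\vU,\Theta]=[\vU^\ast,\Theta^\ast]$ on the lifespan $[0,T^\ast]$ of the strong solution, and that the defect measures $\mathfrak{R}$, $\mathfrak{E}$ vanish there. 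Since the limit is then uniquely identified, the convergence in \eqref{m1}--\eqref{m2} holds for the whole families.

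For the first step, I would start from the Helmholtz-form energy inequality \eqref{energy inequality with Helmholtz}. Combined with the structural hypotheses \eqref{pressure}--\eqref{constitutive relation} on the state functions and the well-preparedness condition \eqref{well-prepared data}, this produces uniform bounds on $\vuex$ in $L^2(0,T;W^{1,2}_0(\Omega;\mathbb{R}^3))$, on the scaled temperature fluctuation $(\widehat{\vt}_\ve-\overline{\vt})/\ve^m$ in $L^2(0,T;W^{1,2}(\Omega))$, and analogous control on the density fluctuation. These bounds are obtained once the state variables and the test fields have been extended from $\Omega_\ve$ to the homogenized domain $\Omega$ as detailed in Section \ref{Preparation}, using zero extension for velocity and the Pokorn\'y--Sk\v r\'i\v sovsk\'y temperature extension. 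Weak compactness, Aubin--Lions, and the continuity equation rescaled by $1/\ve^m$ deliver the limit $[\vU,\Theta]$ with the regularity \eqref{regularity class} and the incompressibility $\Div \vU=0$.

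The principal technical obstacle is to pass to the limit in the momentum equation to reach \eqref{DF OB 1}. Here, any solenoidal test field $\bfphi\in C^1([0,T]\times\overline{\Omega};\mathbb{R}^3)$ vanishing only on $\partial\Omega$ must first be composed with the restriction operator $\mathcal{R}_\ve$ of Diening, Feireisl and Lu, and plugged into \eqref{WF3}; the hypothesis $\alpha>3$ together with the quantitative estimates on $\mathcal{R}_\ve$ (Proposition \ref{main result}) is what makes the error $\bfphi-\mathcal{R}_\ve\bfphi$ negligible in every linear term. The pressure, scaled by $\ve^{-2m}$, is handled by decomposing it along \eqref{pressure} around $(\overline{\vr},\overline{\vt})$; the linear part becomes a gradient after using \eqref{well-prepared data} and is absorbed into the limiting pressure $\Pi$, while the remainder vanishes quadratically in the fluctuations, exploiting $m>\alpha>3$ to dominate the $\ve^{-m}$-scaled gravity term $\vr_\ve\Grad G$. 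The convective term $\vr_\ve\vu_\ve\otimes\vu_\ve$ is \emph{not} upgraded to $\overline{\vr}\,\vU\otimes\vU$; instead, only its weak limit is retained, the defect being recorded as the measure $\mathfrak{R}\in L^\infty(0,T;\mathcal{M}^+(\overline{\Omega};\mathbb{R}^{3\times 3}_{\rm sym}))$. Passing to the limit in the entropy identity \eqref{WF4} after division by $\ve^m$ and linearization around $(\overline{\vr},\overline{\vt})$ gives the heat equation \eqref{OB 2}, while the residual in the energy inequality yields $\mathfrak{E}$ satisfying the compatibility \eqref{compatibility condition}.

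For the second step, the Slobodeckii regularity \eqref{Slobodeckii condition} of $\vt_0^{(1)}$ together with the square-integrable divergence-free $\vu_0$ places the data in the natural trace space for parabolic $L^p$-maximal regularity, which provides a local strong solution $[\vU^\ast,\Theta^\ast]$ on some interval $[0,T^\ast]$ with $T^\ast>0$. The weak-strong uniqueness principle (Theorem \ref{Weak-strong uniqueness}) then forces $[\vU,\Theta]\equiv[\vU^\ast,\Theta^\ast]$ and $\mathfrak{R}\equiv 0$, $\mathfrak{E}\equiv 0$ on $[0,T^\ast]$. The single most delicate point in the entire argument is the simultaneous management of the three small parameters --- Mach number $\ve^m$, Froude number $\ve^{m/2}$, and hole radius $\ve^\alpha$ --- which is exactly what dictates the scaling window $3<\alpha<m$ of hypothesis \eqref{hypothesis parameters}: $\alpha>3$ renders the tiny holes asymptotically invisible through $\mathcal{R}_\ve$, whereas $m>\alpha$ controls the amplification by $\ve^{-m}$ of the residual hole corrections in the singular terms.
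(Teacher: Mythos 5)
Your overall architecture coincides with the paper's: first the limit passage to a dissipative solution with defect measures $\mathfrak{R},\mathfrak{E}$ (Proposition \ref{First result}), then local existence of a strong solution (Theorem \ref{Existence strong solutions OB}) combined with the weak--strong uniqueness principle (Theorem \ref{Weak-strong uniqueness}). There is, however, a genuine gap in how you propose to neutralize the singular pressure/gravity terms and, as a consequence, in how you would reach the target equations. In the dissipative formulation only solenoidal test functions appear, and $\mathcal{R}_{\ve}$ of Proposition \ref{main result} preserves the divergence-free constraint, so the $\ve^{-2m}$-scaled pressure simply drops out of the momentum identity; there is no ``limiting pressure $\Pi$'' to absorb anything into. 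What cannot be dispensed with is the \emph{Boussinesq relation at positive times}, i.e. $\partial_{\vr}p(\overline{\vr},\overline{\vt})\,\vr^{(1)}+\partial_{\vt}p(\overline{\vr},\overline{\vt})\,\vt^{(1)}=\overline{\vr}\,G$ a.e. in $(0,T)\times\Omega$, equation \eqref{rho 1}. You invoke the well-preparedness \eqref{well-prepared data} for this purpose, but that hypothesis concerns only the limiting \emph{initial} data and gives no information for $t>0$. In the paper this relation is derived separately (Section \ref{Boussinesq relation}) by multiplying \eqref{WF3} by $\ve^m$ and testing with $g_{\ve}\bfphi$, where $g_{\ve}$ is a hole-adapted cut-off; $\mathcal{R}_{\ve}$ cannot be used there because the resulting bounds require $W^{1,\infty}$-control of the test functions, and it is exactly at this point that both exponents $\alpha-3>0$ and $m-\alpha>0$ are needed, cf. \eqref{p1}--\eqref{p2}; the additive function of time is then fixed via \eqref{zero mean potential}, \eqref{integral density zero}, \eqref{integral temperature zero}. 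Without \eqref{rho 1} you cannot convert the limit of the gravity contribution $\vr^{(1)}\Grad G$ into the buoyancy term $-A\Theta\Grad G$ of \eqref{OB 3}, nor obtain the heat equation \eqref{OB 2}: the limit entropy balance only controls the combination $\partial_{\vr}s\,\vr^{(1)}+\partial_{\vt}s\,\vt^{(1)}$, and it is the substitution of \eqref{rho 1}, together with Gibbs' relation, that produces $c_p\,\vt^{(1)}$ and the source $\overline{\vt}A\Grad G\cdot\vU$.

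Two further points you gloss over, which the paper must (and does) address. First, the identification of the limit of $\widehat{\vr}_{\ve}\widetilde{s}^{(1)}_{\ve}\widetilde{\vu}_{\ve}$ as the product of the individual limits is not automatic; no Aubin--Lions-type time compactness is established for the entropy, and the paper instead applies the Div--Curl lemma to the essential parts. Second, the passage to the limit in the energy inequality requires the $\delta$-truncation argument of Pokorn\'y--Sk\v r\'i\v sovsk\'y to control the extended dissipation $|\Grad\ell^{(1)}_{\ve}|^2$ on the holes; this is where $\mathfrak{E}$ and the compatibility condition \eqref{compatibility condition} actually come from, and the compatibility is what makes the Gronwall step in Theorem \ref{Weak-strong uniqueness} close.
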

	
	\begin{remark}
		The positive time $T^*$ appearing in \eqref{m1}, \eqref{m2} denotes the maximal time of existence of strong solution to the Oberbeck-Boussinesq system \eqref{OB 1}--\eqref{OB 7}, cf. Theorem \ref{Existence strong solutions OB}.
	\end{remark}
	
		\begin{remark}
			Few comments regarding the optimality of the assumption \eqref{hypothesis parameters} are in order. Due to the fact that the $p$-capacity of the union of all the holes in $\Omega_{\ve}$ is approximately corresponding to $\ve^{(3-p)\alpha-3}$ (see \cite[Remark 2.4]{Lu}), combined with the low integrability of the pressure terms $p_{\ve}^{(1)}$, we must conclude that estimates \eqref{p1} and \eqref{p2} deduced in the proof of Lemma \ref{Entropy inequality ext} below are sharp. Therefore, conditions 
			\begin{equation*}
				\alpha-3 >0, \quad m-\alpha >0,
			\end{equation*}
			are necessary and cannot be improved. 
		\end{remark}
	
	
	Theorem \ref{main theorem} is a direct consequence of two results: first, we will show that the extended weak solutions of the Navier--Stokes-Fourier system converge to the dissipative solution of the Oberbeck--Boussinesq system, cf. Proposition \ref{First result}; secondly, by proving the weak--strong uniqueness principle, we are able to conclude that the dissipative solution must coincide with the strong solution of the target system, as long as the latter exists, cf. Theorem \ref{Weak-strong uniqueness}.

	\section{Preparation} \label{Preparation}
	\subsection{Extension of functions} \label{Extensions}
	
	In order to get the uniform bounds on the homogenized domain $\Omega$ and the correspondent convergences necessary to pass to the limit, we first need to properly extend all the quantities appearing in the system. 
	
	From now on, we will denote
	\begin{equation} \label{d1}
		\vr_{\ve}^{(1)}:= \frac{\vr_{ \ve}- \overline{\vr} }{\ve^{m}}, \quad \vt_{\ve}^{(1)}:= \frac{\vt_{ \ve}- \overline{\vt} }{\ve^{m}}, \quad \ell_{\ve}^{(1)}:= \frac{\log(\vt_{ \ve})- \log(\overline{\vt})}{\ve^m}.
	\end{equation}
	We can simply extend $\big[\vr_{\ve}^{(1)}, \vu_{\ve} \big]$ by zero on $\Omega \setminus \Omega_{\ve}$; more precisely, we consider
	\begin{align}\label{extn-rho,u}
		\widetilde{\vr}_{\ve}^{(1)} :=
		\begin{cases}
			\vr_{\ve}^{(1)}         & \text{in } \Omega_{\ve}  \\
			0        &\text{in } \Omega\setminus \Omega_{\ve} 
		\end{cases},\quad 
		\widetilde{\vu}_\ve :=
		\begin{cases}
			\vu_\ve     &   \text{in } \Omega_{\ve}  \\
			\textbf{0}       &\text{in } \Omega \setminus\Omega_{\ve} 
		\end{cases}.
	\end{align}
	The extension of $\vt_{ \ve}^{(1)}$ and $\ell_{\ve}^{(1)}$ is more delicate due to the Neumann boundary condition for the temperature: the extension by zero may not preserve the $W^{1,2}$-regularity. However, we may use the spatial extension $E_{\ve}$ constructed in \cite[Lemma 4.1]{LuPok}.
		\begin{lemma} \label{operator E}
			Suppose $ \Omega_{\ve} $ is given by \eqref{perforated domain}, \eqref{hole}. For any $\ve \in (0,1)$, there exists an extension operator
			\begin{equation*}
				E_\ve : W^{1,2}(\Omega_{\ve}) \rightarrow W^{1,2}(\Omega)
			\end{equation*}
			such that for each $ \varphi \in W^{1,2}(\Omega_{\ve})  $ and any $1\leq q\leq \infty$ we have 
			\begin{align}
				E_{\ve}(\varphi) &= \varphi \quad \text{in } \Omega_{\ve},\\
				\Vert  E_{\ve}(\varphi)  \Vert_{W^{1,2}(\Omega)} &\leq c \Vert \varphi  \Vert_{ W^{1,2}(\Omega_{\ve})}, \label{e10} \\
				\Vert  E_{\ve}(\varphi)  \Vert_{L^q(\Omega)} &\leq c \Vert \varphi  \Vert_{ L^q(\Omega_{\ve})}, \label{e11}
			\end{align}
			where the positive constant $c$ is independent of $\ve$.
		\end{lemma}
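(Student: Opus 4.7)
The plan is to build $E_{\ve}$ hole by hole by transporting a single fixed, scale-independent extension defined on a reference configuration. Since $U$ is a bounded $C^{2,\nu}$--domain, after a translation and rescaling we may assume $\overline{U}\subset B_1:=B(0,1)$. A Stein-type extension theorem furnishes a linear operator
\[ \tilde{E}\colon W^{1,p}(B_1\setminus\overline{U})\to W^{1,p}(B_1),\qquad \tilde{E}g\equiv g\ \text{on }B_1\setminus\overline{U}, \]
which is bounded uniformly in $1\leq p\leq\infty$ and, simultaneously, as an operator $L^{p}(B_1\setminus\overline U)\to L^{p}(B_1)$ for every $1\leq p\leq\infty$. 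For each $n$, the dilation $y\mapsto x_{\ve,n}+\ve^{\alpha}y$ maps $B_1\setminus\overline U$ onto $B_{\ve,n}\setminus\overline U_{\ve,n}$ and $B_1$ onto $B_{\ve,n}$, and conjugating $\tilde E$ by this dilation gives a local extension $\tilde E_{\ve,n}$ from $B_{\ve,n}\setminus\overline U_{\ve,n}$ to $B_{\ve,n}$.

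The main technical obstacle is that direct transport of $\tilde E$ is \emph{not} uniform in $\ve$: by the homogeneity of the Dirichlet integral, rescaling $\|\tilde E g\|_{W^{1,2}(B_1)}\leq C\|g\|_{W^{1,2}(B_1\setminus\overline U)}$ yields
\[ \|\nabla \tilde E_{\ve,n}\varphi\|_{L^2(B_{\ve,n})}\leq C\left(\ve^{-\alpha}\|\varphi\|_{L^2(B_{\ve,n}\setminus\overline U_{\ve,n})}+\|\nabla \varphi\|_{L^2(B_{\ve,n}\setminus\overline U_{\ve,n})}\right), \]
and the prefactor $\ve^{-\alpha}$ explodes as $\ve\to 0$. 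I would circumvent this via a mean-subtraction trick: setting
\[ c_{\ve,n}:=\fint_{B_{\ve,n}\setminus\overline U_{\ve,n}}\varphi\,\dx,\qquad E_{\ve}\varphi(x):=\tilde E_{\ve,n}(\varphi-c_{\ve,n})(x)+c_{\ve,n}\ \text{for }x\in U_{\ve,n}, \]
and $E_{\ve}\varphi:=\varphi$ on $\Omega_{\ve}$. The constant $c_{\ve,n}$ drops out of the gradient, while the Poincar\'e inequality on the fixed annulus $B_1\setminus\overline U$, transported to $B_{\ve,n}\setminus\overline U_{\ve,n}$, delivers
\[ \|\varphi-c_{\ve,n}\|_{L^2(B_{\ve,n}\setminus\overline U_{\ve,n})}\leq C\ve^{\alpha}\|\nabla \varphi\|_{L^2(B_{\ve,n}\setminus\overline U_{\ve,n})}; \]
the factor $\ve^{\alpha}$ exactly cancels the bad $\ve^{-\alpha}$, producing the scale-invariant estimate $\|\nabla E_{\ve}\varphi\|_{L^2(B_{\ve,n})}\leq C\|\nabla\varphi\|_{L^2(B_{\ve,n}\setminus\overline U_{\ve,n})}$; an analogous computation bounds $\|E_{\ve}\varphi\|_{L^2(B_{\ve,n})}$ by $\|\varphi\|_{W^{1,2}(B_{\ve,n}\setminus\overline U_{\ve,n})}$.

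The bound \eqref{e11} is easier: the reference $L^{q}\to L^{q}$ boundedness of $\tilde E$ rescales identically on both sides, giving $\|\tilde E_{\ve,n}(\varphi-c_{\ve,n})\|_{L^q(B_{\ve,n})}\leq C\|\varphi-c_{\ve,n}\|_{L^q(B_{\ve,n}\setminus\overline U_{\ve,n})}$ with $C$ independent of $\ve$; H\"older's inequality controls $|c_{\ve,n}|\,|U_{\ve,n}|^{1/q}\leq C\|\varphi\|_{L^q(B_{\ve,n}\setminus\overline U_{\ve,n})}$, and combining via the triangle inequality closes the local $L^{q}$-estimate.

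Finally, the enlarged balls $\{D_{\ve,n}\}_n$ being pairwise disjoint by the distribution assumption, and the local modifications supported inside $U_{\ve,n}\subset B_{\ve,n}\subset D_{\ve,n}$, the local pieces glue into a well-defined function on $\Omega$; summing the local estimates over $n$ produces \eqref{e10} and \eqref{e11} with constants depending only on the reference pair $(U,B_1)$, hence independent of $\ve$.
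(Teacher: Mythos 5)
Your construction is correct and is exactly the standard argument behind this lemma: the paper itself gives no proof but cites \cite[Lemma 4.1]{LuPok}, whose construction is precisely a fixed reference extension on the unit configuration $U\subset\subset B_1$, transported by the dilation $x_{\ve,n}+\ve^{\alpha}y$, with subtraction of the mean over the annulus so that the Poincar\'e inequality (scaling like $\ve^{\alpha}$) cancels the $\ve^{-\alpha}$ from the rescaled gradient bound, the $L^q$ bound following by scale invariance and H\"older for the mean, and the global estimates \eqref{e10}, \eqref{e11} obtained by summing over the disjoint balls. So you have reproduced essentially the same proof as the cited source.
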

		In view of the aforementioned Lemma, we define
	\begin{equation}
		\widehat{\vt}_{\ve}^{(1)}:= 
		\begin{cases}
			\vt_{\ve}^{(1)}      & \text{in } \Omega_{\ve}  \\
			E_\ve(\vt_{\ve}^{(1)} )       &\text{in } \Omega \setminus \Omega_{\ve} 
		\end{cases}, \quad 
		\widehat{\ell}_{\ve}^{(1)}:= 
		\begin{cases}
			\ell_{\ve}^{(1)}      & \text{in } \Omega_{\ve}  \\
			E_\ve(\ell_{\ve}^{(1)} )       &\text{in } \Omega \setminus \Omega_{\ve} 
		\end{cases}.
	\end{equation}
	Accordingly, we consider the following extensions 
	\begin{equation*}
		[ \widehat{\vr}_{\ve}, \widehat{\vt}_{\ve}, \widehat{\ell}_{\ve}] := [\overline{\vr} , \overline{\vt}, \log(\overline{\vt})] + \ve^m [\widetilde{\vr}_{\ve}^{(1)}, \widehat{\vt}_{\ve}^{(1)}, \widehat{\ell}_{\ve}^{(1)}].
	\end{equation*}
	Next, we introduce analogous quantities to \eqref{d1} for the thermodynamic functions, 
	\begin{equation*}
		p_{\ve}^{(1)}:= \frac{p(\vr_{\ve}, \vt_{\ve})- p(\overline{\vr}, \overline{\vt})}{\ve^m}, \quad e_{\ve}^{(1)}:= \frac{e(\vr_{\ve}, \vt_{\ve})- e(\overline{\vr}, \overline{\vt})}{\ve^m}, \quad s_{\ve}^{(1)}:= \frac{s(\vr_{\ve}, \vt_{\ve})- s(\overline{\vr}, \overline{\vt})}{\ve^m},
	\end{equation*}
	and for the heat conductivity coefficient,
	\begin{equation*}
		\kappa_{\ve}^{(1)}:= \frac{\kappa(\vt_{\ve})- \kappa(\overline{\vt})}{\ve^m},
	\end{equation*}
	extending them by zero on $\Omega \setminus \Omega_{\ve}$: 
	\begin{equation*}
		[\widetilde{p}_{\ve}^{(1)}, \widetilde{e}_{\ve}^{(1)}, \widetilde{s}_{\ve}^{(1)}, \widetilde{\kappa}_{\ve}^{(1)}]:=
		\begin{cases}
			[p_{\ve}^{(1)}, e_{\ve}^{(1)}, s_{\ve}^{(1)}, \kappa_{\ve}^{(1)}]     &   \text{in } \Omega_{\ve}  \\
			\textbf{0}       &\text{in } \Omega \setminus\Omega_{\ve} 
		\end{cases}.
	\end{equation*}
	Proceeding as before, we consider the following extensions 
	\begin{align}
		[\widehat{p}_{\ve}, \widehat{e}_{\ve}, \widehat{s}_{\ve}, \widehat{\kappa}_{\ve}]&:= [p(\overline{ \vr}, \overline{ \vt}), e(\overline{\vr}, \overline{\vt}), s(\overline{\vr}, \overline{\vt}), \kappa(\overline{\vt})]+ \ve^m [\widetilde{p}_{\ve}^{(1)}, \widetilde{e}_{\ve}^{(1)}, \widetilde{s}_{\ve}^{(1)}, \widetilde{\kappa}_{\ve}^{(1)}];
	\end{align}
	notice, in particular, that $\widehat{\vr}_{\ve}= \overline{\vr}$ and $[ \widehat{p}_{\ve}, \widehat{e}_{\ve}, \widehat{s}_{\ve}, \widehat{\kappa}_{\ve}]= [p(\overline{ \vr}, \overline{ \vt}), e(\overline{\vr}, \overline{\vt}), s(\overline{\vr}, \overline{\vt}), \kappa(\overline{\vt})]$ on $\Omega \setminus \Omega_{\ve}$. Finally, we let the non-negative measure $\mathfrak{S}_{\ve}$ to be zero in $\Omega \setminus \Omega_{\ve} $
	\begin{equation*}
		\widetilde{\mathfrak{S}}_{\ve} := \begin{cases}
			\mathfrak{S}_{\ve}  & \text{in } \Omega_{\ve}  \\
			0     &\text{in } \Omega \setminus \Omega_{\ve} 
		\end{cases}.
	\end{equation*}
	
	\subsection{Essential and residual parts}
	
	Following \cite{FeiNov}, we introduce the set of essential values $\mathcal{O}_{\rm ess} \subset (0,\infty)^2$ together with its residual counterpart $\mathcal{O}_{\rm res} \subset (0,\infty)^2$ as 
	\begin{align*}
		\mathcal{O}_{\rm ess}&:= \left\{ (\vr,\vt) \in \mathbb{R}^2 \  \Big| \  \frac{\overline{\vr}}{2} < \vr < 2\overline{\vr},\ \frac{\overline{\vt}}{2} < \vt < 2\overline{\vt}\right\}, \\
		\mathcal{O}_{\rm res}&:= (0, \infty)^2 \setminus \mathcal{O}_{\rm ess},
	\end{align*}
	while the essential set $\mathcal{M}_{\rm ess} \subset (0,T)  \times \Omega_{\ve}$ and its residual counterpart $\mathcal{M}_{\rm res} \subset (0,T)  \times \Omega_{\ve}$ are defined as 
	\begin{align*}
		\mathcal{M}_{\rm ess}&:=  \left\{ (t,x) \in (0,T) \times \Omega_{\ve} \ \big| \ \big( \vr_{\ve}(t,x), \vt_{\ve}(t,x) \big) \in \mathcal{O}_{\rm ess} \right\}, \\
		\mathcal{M}_{\rm res}&:= \big((0,T)  \times \Omega_{\ve} \big) \setminus \mathcal{M}_{\rm ess}.
	\end{align*}
	We point out that $\mathcal{O}_{\rm ess}, \mathcal{O}_{\rm res}$ are fixed subsets of $(0,\infty)^2$, while $\mathcal{M}_{\rm ess}, \mathcal{M}_{\rm res}$ are measurable subsets of the time-space cylinder $(0,T)  \times \Omega_{\ve}$ depending on $\vr_{\ve}, \vt_{\ve}$. Moreover, in view of the extensions introduced in section \ref{Extensions}, along with $\mathcal{M}_{\rm ess}, \mathcal{M}_{\rm res}$ it makes sense to consider a third set $\mathcal{M}_{\rm holes}$ defined as 
	\begin{equation*}
		\mathcal{M}_{\rm holes} := (0,T) \times (\Omega \setminus \Omega_\ve).
	\end{equation*}
	
	Denoting with $ h_{\ve, {\rm ex}}$ the extension of any measurable function $h_{\ve}$ defined on $(0,T) \times \Omega_\ve$, it makes sense to write 
	\begin{equation} \label{holes part}
		\begin{aligned}
			h_{\ve, {\rm ex}} &= h_{\ve} \mathbbm{1}_{(0,T) \times\Omega_{\ve}} + h_{\ve, {\rm ex}} \mathbbm{1}_{(0,T) \times(\Omega \setminus \Omega_{\ve})} := [h_{\ve}]_{\rm ess} + [h_{\ve}]_{\rm res} + [h_{\ve}]_{\rm holes }, \\
			[h_{\ve}]_{\rm ess}:= h_{\ve} \mathbbm{1}_{\mathcal{M}_{\rm ess}}, \quad &[h_{\ve}]_{\rm res}:= h_{\ve} \mathbbm{1}_{\mathcal{M}_{\rm res}}= h_{\ve} - [h_{\ve}]_{\rm ess}, \quad [h_{\ve}]_{\rm holes } : = h_{\ve, {\rm ex}} \mathbbm{1}_{\mathcal{M}_{\rm holes}} =  h_{\ve, {\rm ex}} - h_{\ve} \mathbbm{1}_{(0,T) \times\Omega_{\ve}}.
		\end{aligned}
	\end{equation}
	
	\subsection{Uniform bounds} 
	
	We are now ready to establish the uniform bounds on the whole domain $\Omega$.
	
	\begin{lemma}\label{uni-bound-Omega}
		Under the hypotheses of Theorem \ref{main theorem}, the following uniform bounds hold.
		\begin{align}
			\esssup_{t\in (0,T)} \left| \mathcal{M}_{\rm holes}(t) \right| &\leq c \ve^{3(\alpha-1)}, \label{c3.1}\\[0.1cm]
			\esssup_{t\in (0,T)} \vert \mathcal{M}_{\rm res}(t) \vert & \leq c \ve^{2m} , \label{e4}\\[0.1cm]
			\widetilde{\mathfrak{S}}_{\ve} ([0,T] \times \overline{\Omega}) &\leq c\ve^{2m}, \label{e13}\\[0.1cm]
			\esssup_{t\in (0,T)}   \left[\big\|[\vr_{\ve}(t)]_{\rm res}\big\|_{L^{\frac{5}{3}}(\Omega)}^{\frac{5}{3}}+ \big\|[\vt_{\ve}(t)]_{\rm res}\big\|_{L^4(\Omega)}^4\right]   & \leq c \ve^{2m} , \label{e5} \\
			\left\| \left(\left[ \vr_{\ve}^{(1)} \right]_{\rm ess}, \left[ \vt_{\ve}^{(1)} \right]_{\rm ess}\right) \right\|_{L^{\infty}(0,T; L^2(\Omega; \mathbb{R}^2))} &\leq c, \label{e1}\\[0.1cm]
			\| \vuex\|_{L^2(0,T; W^{1,2}_0(\Omega; \mathbb{R}^3))} &\leq c, \label{e2}\\[0.1cm]
			\| \sqrt{ \widehat{\vr}_{\ve}} \vuex \|_{L^{\infty}(0,T; L^2 (\Omega; \mathbb{R}^3))} &\leq c, \label{e9.1}\\[0.1cm]
			\| \widehat{\vr}_{\ve} \vuex \|_{L^{\infty}(0,T; L^{\frac{5}{4}} (\Omega; \mathbb{R}^3))} &\leq c, \label{e9}\\[0.1cm]
			\| \widehat{\vr}_{\ve} \vuex \otimes \vuex \|_{L^{1}(0,T; L^{\frac{15}{14}} (\Omega; \mathbb{R}^{3\times 3}))}  & \leq c, \label{e9.2}\\[0.1cm]
			\left\|\left( \widehat{\vt}^{(1)}, \widehat{\ell}_{\ve}^{(1)}\right) \right\|_{L^2(0,T; W^{1,2}(\Omega; \mathbb{R}^2))} &\leq c, \label{e3} \\[0.1cm]
			\left\| \left[\frac{  p(\vr_{\ve}, \vt_{\ve})}{\ve^m}\right]_{\rm res} \right\|_{L^{\infty}(0,T; L^1(\Omega))} & \leq c\ve^m, \label{e15} \\[0.1cm]
			\left\| \left[\frac{ \vr_{\ve}  s(\vr_{\ve}, \vt_{\ve})}{\ve^m}\right]_{\rm res} \right\|_{L^2(0,T; L^{\frac{30}{23}}(\Omega))} & \leq c, \label{e6} \\[0.1cm]
			\left\| \left[\frac{ \vr_{\ve}  s(\vr_{\ve}, \vt_{\ve})}{\ve^m}\right]_{\rm res} \vuex \right\|_{L^2(0,T; L^{\frac{30}{29}}(\Omega; \mathbb{R}^3))} & \leq c, \label{e7} \\[0.1cm]
			\left\| \left[ \frac{\kappa(\vt_{\ve})}{\vt_{\ve}} \Grad \left( \frac{\vt_{\ve}}{\ve^m} \right) \right]_{\rm res}  \right\|_{L^\frac{14}{13}(0,T; L^{\frac{14}{13}}(\Omega; \mathbb{R}^3))} &\leq c. \label{e12}
		\end{align}
	\end{lemma}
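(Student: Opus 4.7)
The proof strategy is to derive every bound from a single master estimate, namely the energy inequality \eqref{energy inequality with Helmholtz} combined with the well-prepared assumptions \eqref{initial data}--\eqref{convergence initial temperatures}, and then to transfer the bounds from $\Omega_\ve$ to the homogenized domain $\Omega$ using trivial extension for $[\vr_\ve^{(1)},\vu_\ve]$ and the extension operator $E_\ve$ from Lemma \ref{operator E} for the temperature quantities. The preliminary combinatorial bound \eqref{c3.1} is immediate from \eqref{number holes}, since $|\mathcal{M}_{\rm holes}(t)|=|\Omega\setminus\Omega_\ve|\leq N(\ve)\cdot|B_{\ve,n}|\lesssim \ve^{-3}\cdot\ve^{3\alpha}=\ve^{3(\alpha-1)}$.

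First I would analyze the right-hand side of \eqref{energy inequality with Helmholtz}. Using the Taylor expansion of the convex function $\vr\mapsto H_{\overline{\vt}}(\vr,\vt)-(\vr-\overline{\vr})\partial_\vr H_{\overline{\vt}}(\overline{\vr},\overline{\vt})-H_{\overline{\vt}}(\overline{\vr},\overline{\vt})$ around $(\overline{\vr},\overline{\vt})$, which vanishes to second order, together with the well-prepared structure $\vr_{0,\ve}=\overline{\vr}+\ve^m \vr_{0,\ve}^{(1)}$, $\vt_{0,\ve}=\overline{\vt}+\ve^m\vt_{0,\ve}^{(1)}$ and the $L^\infty$ bounds \eqref{i1}--\eqref{convergence initial temperatures}, the Helmholtz contribution at $t=0$ is $O(\ve^{2m})$, so its $\ve^{-2m}$-scaled version remains $O(1)$. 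The kinetic and potential terms are trivially uniformly bounded. Consequently the entire RHS of \eqref{energy inequality with Helmholtz} is $O(1)$. From the LHS we read off, using the classical Feireisl--Novotn\'y coercivity estimates on the Helmholtz perturbation (linear/quadratic behavior on $\mathcal{O}_{\rm ess}$ and growth like $1+\vr^{5/3}+\vt^4$ on $\mathcal{O}_{\rm res}$), the bounds \eqref{e4}, \eqref{e13}, \eqref{e5} and \eqref{e1}. The viscous dissipation term yields $\int_0^T\int_{\Omega_\ve}|\Grad\vu_\ve|^2\dx\dt\lesssim 1$, which, combined with Poincar\'e's inequality and trivial extension, produces \eqref{e2}; likewise the heat-flux dissipation $\ve^{-2m}\int\kappa(\vt_\ve)\vt_\ve^{-2}|\Grad\vt_\ve|^2$ gives $\|\Grad\vt_\ve^{(1)}\|_{L^2 L^2(\Omega_\ve)}\lesssim 1$ and the same for $\ell_\ve^{(1)}$.

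Next I would transfer these bounds to $\Omega$. The density-weighted momentum and energy bounds \eqref{e9.1}, \eqref{e9}, \eqref{e9.2} follow from \eqref{e2} together with \eqref{e1} and \eqref{e5} via H\"older's inequality (for \eqref{e9}: the essential part is bounded because $\widehat\vr_\ve\lesssim 1$; the residual contribution, absent in $\Omega\setminus\Omega_\ve$, is controlled through \eqref{e5} and the Sobolev embedding of $\vuex$; the $L^{15/14}$ tensor bound \eqref{e9.2} comes from interpolating $\vuex\in L^2 L^6$ with $\widehat\vr_\ve\vuex\in L^\infty L^{5/4}$). For \eqref{e3} one applies Lemma \ref{operator E} to $\vt_\ve^{(1)}$ and $\ell_\ve^{(1)}$: by \eqref{e10} the $W^{1,2}(\Omega)$-norm of the extension is controlled by the $W^{1,2}(\Omega_\ve)$-norm of the original, and adding a constant shift does not affect the gradient, while the zero-mean condition on $\vt_{0,\ve}^{(1)}$ combined with the renormalized continuity equation yields the required $L^2(\Omega)$ control by Poincar\'e.

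Finally I would handle the residual pressure, entropy and heat-flux estimates. For \eqref{e15}, the growth hypotheses \eqref{pressure}--\eqref{capital P} imply $p(\vr,\vt)\lesssim \vr^{5/3}+\vt^4$, so on $\mathcal{M}_{\rm res}$ the $L^1$-norm of $p(\vr_\ve,\vt_\ve)$ is controlled by $\ve^{2m}$ via \eqref{e5}, giving the $\ve^{-m}$-scaled bound $\lesssim \ve^m$. For the entropy bound \eqref{e6}, the relations \eqref{entropy} and the coercivity of $\mathcal{S}$ imply $|\vr s(\vr,\vt)|\lesssim \vr(1+|\log\vr|+\log_+\vt)+\vt^3$ on the residual set; interpolation with \eqref{e4} and \eqref{e5} delivers the $L^2 L^{30/23}$ bound, and multiplying by $\vuex\in L^2 L^6$ produces \eqref{e7} via H\"older ($\tfrac{23}{30}+\tfrac{1}{6}=\tfrac{29}{30}$). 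The heat-flux residual bound \eqref{e12} comes from splitting $\kappa(\vt_\ve)\vt_\ve^{-1}\Grad(\vt_\ve/\ve^m)$ into a factor $\kappa^{1/2}(\vt_\ve)\vt_\ve^{-1}\Grad\vt_\ve/\ve^m\in L^2 L^2$ and a factor $\kappa^{1/2}(\vt_\ve)\lesssim 1+\vt_\ve^{3/2}$; on the residual set the second factor lies in a suitable $L^q L^q$ space thanks to \eqref{e5}, and H\"older gives the exponent $14/13$.

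The genuinely delicate point will be the residual heat-flux estimate \eqref{e12}: one must carefully book-keep how much $\vt_\ve$-integrability on the residual set \eqref{e5} is available and balance it against the $L^2 L^2$ gradient control so that the resulting exponent matches $14/13$, rather than a weaker one. The other subtle ingredient is the use of Lemma \ref{operator E} for $\ell_\ve^{(1)}=\log(\vt_\ve)/\ve^m-\log(\overline\vt)/\ve^m$, which must be handled with care because $\ell_\ve^{(1)}$ is only in $L^2 W^{1,2}(\Omega_\ve)$ and the extension constant in \eqref{e10} must be independent of $\ve$; this is guaranteed by Lemma \ref{operator E} itself and is precisely the point where the perforation geometry enters.
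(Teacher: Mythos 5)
Your overall strategy is in substance the same as the paper's: the paper obtains the core estimates by citing the dissipation bounds of \cite[Proposition 5.1]{FeiNov}, which is precisely the argument you re-derive from \eqref{energy inequality with Helmholtz} (quadratic/coercive behaviour of the Helmholtz perturbation on the essential set, growth $1+\vr^{5/3}+\vt^4$ on the residual set, viscous and heat dissipation), and it transfers the temperature bounds to $\Omega$ via Lemma \ref{operator E} and the zero extension for $[\vr^{(1)}_{\ve},\vu_{\ve}]$, exactly as you propose; \eqref{c3.1} and \eqref{e15} are handled as you indicate. One point you gloss over: the term $-\ve^{-m}(\vr_{\ve}-\overline{\vr})G$ on the left-hand side of \eqref{energy inequality with Helmholtz} is not sign-definite, so you cannot simply ``read off'' \eqref{e4}--\eqref{e1}; it must first be absorbed (essential/residual splitting plus a Gronwall-type argument), which is part of the Feireisl--Novotn\'y scheme you invoke.

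There are, however, three concrete steps that fail as written, all concerning the $L^{\infty}$-in-time or exponent bookkeeping. First, \eqref{e9.1} cannot follow ``from \eqref{e2} together with \eqref{e1} and \eqref{e5} via H\"older'': \eqref{e2} controls $\vuex$ only in $L^2$ in time, so no H\"older combination of these yields an $L^{\infty}(0,T)$ bound; \eqref{e9.1} is instead the $L^\infty_t$ kinetic-energy bound read off directly from the left-hand side of \eqref{energy inequality with Helmholtz}. Second, for \eqref{e9} your treatment of the residual part through ``\eqref{e5} and the Sobolev embedding of $\vuex$'' has the same defect (only $L^2$ in time); the correct route, and the one the paper takes, writes $[\vr_{\ve}]_{\rm res}\vu_{\ve}=\sqrt{[\vr_{\ve}]_{\rm res}}\,\sqrt{\vr_{\ve}}\vu_{\ve}$ and combines \eqref{e5} with \eqref{e9.1}, giving $L^\infty(0,T;L^{5/4})$ since $\tfrac{3}{10}+\tfrac12=\tfrac45$. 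Third, for \eqref{e9.2} the pairing of $\widehat{\vr}_{\ve}\vuex\in L^\infty L^{5/4}$ with $\vuex\in L^2L^6$ gives only $L^2(0,T;L^{30/29})$, and $L^{30/29}$ does not control the spatial exponent $\tfrac{15}{14}$; one should instead pair $\widehat{\vr}_{\ve}\in L^\infty(0,T;L^{5/3})$ with $\vuex\otimes\vuex\in L^1(0,T;L^3)$ (from \eqref{e2} and Sobolev embedding), since $\tfrac35+\tfrac13=\tfrac{14}{15}$. These are fixable slips rather than structural gaps -- all the needed ingredients are already among your bounds -- but as stated those three derivations do not produce the claimed norms.
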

	\begin{proof}
		The uniform bounds \eqref{e4}--\eqref{e9.1} and \eqref{e6}--\eqref{e12} are a direct consequence of \cite[Proposition 5.1]{FeiNov} since all the involved quantities vanish on $\Omega \setminus \Omega_{\ve}$. Bound \eqref{c3.1} follows from \eqref{number holes}, while bound \eqref{e9} can be deduced combining \eqref{e5}, \eqref{e1} and \eqref{e9.1}, and similarly, estimate \eqref{e9.2} is a consequence of bounds \eqref{e5}--\eqref{e2}. Estimate \eqref{e3} can be deduced from \cite[Proposition 5.1, equations (5.52), (5.53)]{FeiNov}, namely
		\begin{equation} \label{e3.1}
			\left\|\left(\vt_{\ve}^{(1)}, \ell_{\ve}^{(1)}\right) \right\|_{L^2(0,T; W^{1,2}(\Omega_{\ve}; \mathbb{R}^2))} \leq c, 
		\end{equation}
		combined with estimate \eqref{e10}. Finally, estimate \eqref{e15} follows from bounds \eqref{e4}, \eqref{e5} and the fact that, from hypotheses \eqref{capital P} and \eqref{constitutive relation}, we have
			\begin{equation*}
				\left[\frac{p(\vr_{ \ve}, \vt_{ \ve})}{\ve^m}\right]_{\rm res} \leq c \left( \left[\frac{1}{\ve^m}\right]_{\rm res} + \left[ \frac{\vr^{\frac{5}{3}}}{\ve^m}\right]_{\rm res} + \left[ \frac{\vt_{ \ve}^4}{\ve^m}\right]_{\rm res}\right);
			\end{equation*}
			see \cite[Section 5.3.3]{FeiNov} for more details.
	\end{proof}

	\section{Field equations on the homogenized domain} \label{Field equations on the homogenized domain}
	
	Before passing to the limit, along with the extension of all the quantities appearing in the primitive system \eqref{continuity equation}--\eqref{balance internal energy}, it is also necessary to extend the validity of the integral identities of Definition \ref{definition weak solution} to arbitrary test functions defined on the whole domain $\Omega$: the latter is the purpose of this section.
	
	\subsection{Continuity equation}
	
	\begin{lemma}
		Under the hypotheses of Theorem \ref{main theorem}, the integral identity 
		\begin{equation} \label{WF1 ext}
			\left[\int_{\Omega} \widehat{\vr}_{\ve} \vphi(t,\cdot) \ \dx\right]_{t=0}^{t=\tau}  = \int_{0}^{\tau} \int_{\Omega} [ \widehat{\vr}_{\ve} \DerTime \vphi +  \widehat{\vr}_{\ve} \widetilde{\vu}_{\ve} \cdot \Grad \vphi] \ \dx \dt,
		\end{equation}
		holds for any $\tau \in [0,T]$ and any $\vphi \in C^1([0,T] \times \overline{\Omega})$.
	\end{lemma}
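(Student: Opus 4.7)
The plan is to split every integral in \eqref{WF1 ext} into its contribution from the perforated part $\Omega_{\ve}$ and from the complement $\Omega\setminus\Omega_{\ve}$ (i.e.\ the holes), use the original weak continuity identity \eqref{WF1} on $\Omega_{\ve}$, and observe that the contribution from the holes reduces to a trivial identity.

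First I note that \eqref{WF1} applies to any $\vphi \in C^1([0,T]\times\overline{\Omega}_{\ve})$, with no requirement that $\vphi$ vanish on $\partial\Omega_{\ve}$ (unlike the momentum identity \eqref{WF3}). Consequently, given $\vphi\in C^1([0,T]\times\overline{\Omega})$, its restriction to $\overline{\Omega}_{\ve}$ is an admissible test function, so
\begin{equation*}
\left[\int_{\Omega_{\ve}}\vr_{\ve}\vphi(t,\cdot)\,\dx\right]_{t=0}^{t=\tau}
= \int_{0}^{\tau}\!\!\int_{\Omega_{\ve}}\bigl[\vr_{\ve}\DerTime\vphi+\vr_{\ve}\vu_{\ve}\cdot\Grad\vphi\bigr]\,\dx\,\dt.
\end{equation*}

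Next, I would invoke the construction of $\widehat{\vr}_{\ve}$ and $\widetilde{\vu}_{\ve}$ in Section \ref{Extensions}: on $\Omega_{\ve}$ they coincide with $\vr_{\ve}$ and $\vu_{\ve}$, while on $\Omega\setminus\Omega_{\ve}$ we have $\widehat{\vr}_{\ve}\equiv\overline{\vr}$ and $\widetilde{\vu}_{\ve}\equiv\mathbf{0}$. Splitting both sides of \eqref{WF1 ext} accordingly, the only remaining piece to verify is the identity on the holes:
\begin{equation*}
\overline{\vr}\int_{\Omega\setminus\Omega_{\ve}}\bigl[\vphi(\tau,\cdot)-\vphi(0,\cdot)\bigr]\,\dx
= \overline{\vr}\int_{0}^{\tau}\!\!\int_{\Omega\setminus\Omega_{\ve}}\DerTime\vphi\,\dx\,\dt,
\end{equation*}
which is immediate by Fubini and the fundamental theorem of calculus, since $\Omega\setminus\Omega_{\ve}$ is fixed in time and $\vphi\in C^1([0,T]\times\overline{\Omega})$. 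Adding this to the identity for $\Omega_{\ve}$ yields exactly \eqref{WF1 ext}.

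I do not expect any genuine obstacle here: the continuity identity does not require the test function to vanish on $\partial\Omega_{\ve}$, and the extended density is a \emph{constant} on the holes paired with a \emph{zero} velocity, so no capacity estimate, cut-off, or restriction operator is needed. The only mild subtlety is to verify that the initial value $\widehat{\vr}_{\ve}(0,\cdot)$ is consistent with the decomposition, which follows from $\widehat{\vr}_{\ve}(0,\cdot)=\overline{\vr}+\ve^{m}\widetilde{\vr}_{0,\ve}^{(1)}$ agreeing with $\vr_{0,\ve}$ on $\Omega_{\ve}$ and with $\overline{\vr}$ on $\Omega\setminus\Omega_{\ve}$.
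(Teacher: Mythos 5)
Your proposal is correct and follows essentially the same route as the paper: restrict the test function to $\overline{\Omega}_{\ve}$, use it in \eqref{WF1}, and add the trivial identity $\overline{\vr}\left[\int_{\Omega\setminus\Omega_{\ve}}\vphi(t,\cdot)\,\dx\right]_{t=0}^{t=\tau}=\overline{\vr}\int_{0}^{\tau}\int_{\Omega\setminus\Omega_{\ve}}\DerTime\vphi\,\dx\,\dt$ on the (time-independent) holes, using that $\widehat{\vr}_{\ve}=\overline{\vr}$ and $\widetilde{\vu}_{\ve}=\mathbf{0}$ there. Your explicit remarks that \eqref{WF1} requires no vanishing of the test function on $\partial\Omega_{\ve}$ and that the initial datum is consistent with the extension are correct and match the paper's (more terse) argument.
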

	\begin{proof}
		Let $\varphi \in C^1_c([0,T]\times \overline{\Omega})$; then $\varphi|_{\overline{\Omega}_{\ve}} \in C^1_c([0,T]\times \overline{\Omega}_{\ve})$ can be used as test function in the weak formulation of the continuity equation \eqref{WF1}, obtaining
		\begin{equation*}
			\left[\int_{\Omega_{\ve}} \vr_{\ve} \vphi(t,\cdot) \ \dx\right]_{t=0}^{t=\tau} + \overline{\vr}  \left[\int_{\Omega \setminus \Omega_{\ve}} \vphi(t,\cdot) \ \dx\right]_{t=0}^{t=\tau}= \int_{0}^{\tau} \int_{\Omega_{\ve}} [\vr_{\ve} \DerTime \vphi + \vr_{\ve} \vu_{\ve} \cdot \Grad \vphi] \ \dx \dt + \overline{\vr}\int_{0}^{\tau} \int_{\Omega \setminus \Omega_{\ve}} \DerTime \vphi \ \dx \dt.
		\end{equation*}
		Now, it is enough to use the fact that $ \widehat{\vr}_{\ve}=\widehat{\vr}_{0,\ve}= \overline{\vr}$ and $\widetilde{\vu}_{\ve}= \textbf{0}$ on $\Omega \setminus \Omega_{\ve}$ to get \eqref{WF1 ext}.
	\end{proof}
	
	\subsection{Momentum equation} \label{Ex momentum equation}
	
	The extension of the weak formulation of the balance of momentum \eqref{WF3} is delicate due to the fact that the latter holds for test functions that vanish on the boundary of the perforated domain $\Omega_{\ve}$. Therefore, given an arbitrary test function defined on $\Omega$, we need to apply a suitable restriction operator 
	\begin{equation*}
		\mathcal{R}_{\ve}: W_0^{1,p}(\Omega; \mathbb{R}^3) \rightarrow W_0^{1,p}(\Omega_{\ve}; \mathbb{R}^3),
	\end{equation*}
	preserving the ``divergence-free" property; in particular, we need the following result, which can be found in \cite[Theorem 2.1]{Lu}.
		\begin{proposition} \label{main result}
			Let $p\in \left( 1, \infty\right)$ be fixed and let $\Omega_\ve$ be the perforated domain defined by \eqref{perforated domain}, \eqref{hole}. For any $\ve \in (0,1)$, there exists a linear operator 
			\begin{equation*}
				\mathcal{R}_{\ve}: W_0^{1,p}(\Omega; \mathbb{R}^3) \rightarrow W_0^{1,p}(\Omega_{\ve}; \mathbb{R}^3)
			\end{equation*}
			such that for any $\bm{\varphi} \in W^{1,p}_0(\Omega; \mathbb{R}^3)$,  
			\begin{equation} \label{estimate R}
				\| \mathcal{R}_{\ve} (\bm{\varphi}) \|_{W^{1,p}_0(\Omega_{\ve}; \mathbb{R}^{3})} \leq c \left( 1 + \ve^{\frac{3(\alpha-1)}{p}- \alpha} \right) \| \bm{\varphi}\|_{W^{1,p}_0(\Omega; \mathbb{R}^3)},
			\end{equation}
			where the positive constant $c$ does not depend on $\ve$. Furthermore, if $\Div \bm{\varphi}=0$ then $\Div \mathcal{R}_{\ve}(\bm{\varphi})=0$.
		\end{proposition}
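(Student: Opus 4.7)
The plan is to construct $\mathcal{R}_\ve$ hole by hole, modifying $\bm{\varphi}$ only on the annular shell $A_{\ve,n} := D_{\ve,n} \setminus \overline{\textup{U}}_{\ve,n}$ surrounding each obstacle, so as to kill its trace on $\partial \textup{U}_{\ve,n}$ while not altering its divergence. First, for each $n$, I would fix a smooth cutoff $\chi_{\ve,n} \in C_c^\infty(D_{\ve,n})$ with $\chi_{\ve,n} \equiv 1$ on $B_{\ve,n}$ and $|\Grad \chi_{\ve,n}| \lesssim \ve^{-1}$, exploiting the fact that $D_{\ve,n} \setminus B_{\ve,n}$ is a spherical shell of width of order $\ve$. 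Then set
\begin{equation*}
    \mathcal{R}_\ve(\bm{\varphi}) := \begin{cases} (1-\chi_{\ve,n})\bm{\varphi} + \mathcal{B}_{A_{\ve,n}}\bigl(\Div(\chi_{\ve,n}\bm{\varphi})\bigr) & \text{on } A_{\ve,n}, \\ \bm{\varphi} & \text{on } \Omega_\ve \setminus \bigcup_{n} A_{\ve,n}, \end{cases}
\end{equation*}
where $\mathcal{B}_{A_{\ve,n}}$ denotes a Bogovskii-type right inverse of the divergence on $A_{\ve,n}$ with zero trace on $\partial A_{\ve,n}$. A direct computation gives $\Div \mathcal{R}_\ve(\bm{\varphi}) = \Div \bm{\varphi}$ in $A_{\ve,n}$ provided the right-hand side of the Bogovskii problem has zero mean; the latter follows from the divergence theorem together with the properties of $\chi_{\ve,n}$ whenever $\Div \bm{\varphi} = 0$, while the general case can be handled by subtracting the mean at the cost of a lower-order correction. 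Because $\chi_{\ve,n} = 1$ in a neighbourhood of $\partial \textup{U}_{\ve,n}$ and the Bogovskii correction vanishes on the whole of $\partial A_{\ve,n}$, the resulting vector field indeed belongs to $W^{1,p}_0(\Omega_\ve; \mathbb{R}^3)$, and the divergence-free preservation stated in the last claim of the proposition follows by construction.

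The main obstacle is quantifying the $W^{1,p}$-norm of the Bogovskii correction on $A_{\ve,n}$, a domain with two vastly different length scales: inner radius of order $\ve^\alpha$ and outer radius of order $\ve$. The strategy I would follow is to rescale $A_{\ve,n}$ by the factor $\ve^{-\alpha}$, mapping it to an annulus $\widehat{A}$ with inner radius of order $1$ and outer radius of order $\ve^{1-\alpha}\gg 1$. An explicit Bogovskii construction on $\widehat{A}$, carried out by decomposing it into dyadic spherical shells of fixed aspect ratio and adding the local Bogovskii right inverses on each shell, produces an operator whose norm grows only polynomially in the ratio of the two radii. Undoing the change of variables and carefully tracking the powers of $\ve^{\alpha}$ arising in the $L^p$ and $W^{1,p}$ norms under the dilation yields the local estimate
\begin{equation*}
    \| \mathcal{R}_\ve(\bm{\varphi}) - \bm{\varphi} \|_{W^{1,p}(A_{\ve,n})}^{p} \lesssim \left( 1 + \ve^{3(\alpha-1) - \alpha p} \right) \| \bm{\varphi} \|_{W^{1,p}(D_{\ve,n})}^{p},
\end{equation*}
the constant $1$ accounting for the unproblematic term $(1-\chi_{\ve,n})\bm{\varphi}$ and the second factor tracking the Bogovskii correction.

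It then remains to sum the local estimates over the $N(\ve) \lesssim \ve^{-3}$ pairwise disjoint domains $D_{\ve,n}$ and add the trivial contribution from $\Omega_\ve \setminus \bigcup_n A_{\ve,n}$ (on which $\mathcal{R}_\ve$ is the identity); since the $D_{\ve,n}$ are mutually disjoint, the sum of the $W^{1,p}$ norms of $\bm{\varphi}$ over them is controlled by $\| \bm{\varphi} \|_{W^{1,p}(\Omega)}^{p}$, and taking the $p$-th root yields exactly \eqref{estimate R}. The sharpness of the exponent ultimately reflects the $p$-capacity of the holes, which is why no scaling argument shortcutting the dyadic Bogovskii analysis can improve it.
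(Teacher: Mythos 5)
The paper does not actually prove this proposition: it is imported verbatim from Lu \cite[Theorem 2.1]{Lu}, whose construction implements Allaire's technique (local corrections built \emph{at the scale of the holes} $\ve^{\alpha}$, with the capacity of the holes producing the factor $\ve^{\frac{3(\alpha-1)}{p}-\alpha}=\ve^{\frac{(3-p)\alpha-3}{p}}$). Your divergence bookkeeping (the compatibility of the Bogovskii datum when $\Div \bm{\varphi}=0$, the zero trace on $\partial \textup{U}_{\ve,n}$, the matching across $\partial D_{\ve,n}$, and the summation over the disjoint sets $D_{\ve,n}$) is fine. The quantitative core of your argument, however, fails, and the failure is exactly at the step you declare ``unproblematic''.

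Your cutoff lives at the wrong scale: $\chi_{\ve,n}\equiv 1$ on $B_{\ve,n}$ with $|\Grad \chi_{\ve,n}|\lesssim \ve^{-1}$ forces the transition region to sweep out a set of volume $\simeq \ve^{3}$ around each hole, so the term $\Grad\chi_{\ve,n}\otimes\bm{\varphi}$ (present both in $\Grad[(1-\chi_{\ve,n})\bm{\varphi}]$ and, through $\Div(\chi_{\ve,n}\bm{\varphi})$, in the Bogovskii correction) costs $\ve^{-1}\|\bm{\varphi}\|_{L^p(D_{\ve,n})}$. Testing your claimed local estimate with $\bm{\varphi}$ essentially equal to a nonzero constant $c_0$ on $D_{\ve,n}$ gives a left-hand side $\gtrsim |c_0|^p\,\ve^{-p}\ve^{3}$ against a right-hand side $\lesssim \bigl(1+\ve^{3(\alpha-1)-\alpha p}\bigr)|c_0|^p\,\ve^{3}$, which for $p=2$ and $\alpha>3$ is $\simeq |c_0|^2\ve^{3}$: the estimate is off by $\ve^{-p}$, and there is no cancellation available since your proof bounds the two contributions separately. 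Globally your operator therefore only yields a constant of order $\ve^{-1}$ in \eqref{estimate R} (apply it to a fixed smooth solenoidal field of order one on a subdomain containing $\simeq\ve^{-3}$ holes), which is strictly weaker than the stated bound whenever $p<3$ and $\alpha>\frac{3}{3-p}$ — precisely the range ($p=\frac{6}{5},\frac{3}{2},2,\dots$ with $\alpha>3$) in which the paper later needs \eqref{estimate R} to be uniformly bounded. To obtain the sharp capacity factor, the correction must be localized at the hole scale, i.e.\ a cutoff with $|\Grad\chi|\simeq\ve^{-\alpha}$ supported in a ball of radius $\simeq\ve^{\alpha}$; and even then the naive per-hole bound $\ve^{-\alpha}\|\bm{\varphi}\|_{L^p(B(x_{\ve,n},2\ve^{\alpha}))}$ does not sum to the stated global constant by H\"older alone — one needs a genuine two-scale Poincar\'e--Sobolev (or Stokes-problem) estimate comparing $\bm{\varphi}$ on the $\ve^{\alpha}$-neighbourhood of the hole with its behaviour on the surrounding cell of size $\ve$. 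That two-scale estimate is the real content of Allaire's construction as carried out in Lu's Theorem 2.1, and it is missing from your proposal; the dyadic Bogovskii analysis on $A_{\ve,n}$, by contrast, is a side issue (the annulus is uniformly a John domain, so the Bogovskii constant is harmless) and cannot compensate for the cutoff loss.
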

		
		The operator $\mathcal{R}_{\ve}$ can be constructed implementing the technique developed by Allaire for the case $p=2$, cf. \cite[Section 2.2]{All}. Notice that an analogous restriction operator was considered by Diening, Feireisl and Lu when constructing the inverse of the divergence operator on perforated domains, cf. \cite[equation (3.12)]{DieFeiLu}. It is worth to point out that, as we are considering $C^{2,\nu}$- domains, the additional condition $\frac{3}{2}<p<3$ imposed in \cite[Theorem 2.1]{Lu} is not necessary in this context, cf. \cite[Remark 1.1]{Lu}.
		
		Adapting the construction presented in \cite[Theorem 2.1]{Lu}, it can be shown that the function $\bm{\varphi}- \mathcal{R}_{\ve}(\bm{\varphi})$ vanishes everywhere with the exception of the disjoints sets $D_{\ve, n}$, $n=1, \dots, N(\ve)$. Therefore, if we fix $r\in (1,\infty)$  and $p \in \left[ \frac{3}{2}, \infty \right)$ with $p>r$, from H\"{o}lder's inequality, estimate \eqref{estimate R} and the fact that $|D_{\ve,n}| \simeq \ve^{3}$, we have for any $\bfphi \in C_c^{\infty}(\Omega; \mathbb{R}^3)$,
		\begin{equation} \label{estimate difference}
			\begin{aligned}
				\| \bfphi - \mathcal{R}_{\ve}(\bfphi) \|_{L^r(\Omega; \mathbb{R}^3)} &= \sum_{n=1}^{N(\ve)} \| \bfphi - \mathcal{R}_{\ve}(\bfphi) \|_{L^r(D_{\ve,n}; \mathbb{R}^3)} \leq |D_{\ve,n}|^{\frac{1}{r}-\frac{1}{p}} \sum_{n=1}^{N(\ve)} \| \bfphi - \mathcal{R}_{\ve}(\bfphi) \|_{L^p(D_{\ve,n}; \mathbb{R}^3)} \\
				&\lesssim \ve^{3 \left(\frac{1}{r}-\frac{1}{p}\right)} \| \bfphi - \mathcal{R}_{\ve}(\bfphi) \|_{L^p(\Omega; \mathbb{R}^3)} \\
				& \lesssim \ve^{3\left(\frac{1}{r}-\frac{1}{p}\right)} \left( 1+ \ve^{\frac{3(\alpha-1)}{p}- 1} \right) \| \bfphi \|_{W_0^{1, \frac{3p}{p+3}}(\Omega; \mathbb{R}^3)};
			\end{aligned}
		\end{equation}
		notice that in the last line we have also used the Sobolev embedding
		\begin{equation*}
			W_0^{1, \frac{3p}{p+3}}(\Omega) \hookrightarrow L^p(\Omega), \quad \mbox{for } p \in \left[ \frac{3}{2}, \infty \right).
		\end{equation*} 
		Similarly, for any fixed $r, q \in (1,\infty)$ with $q>r$, it is easy to obtain that for any $\bfphi \in C_c^{\infty}(\Omega; \mathbb{R}^3)$,
		\begin{equation} \label{estimate difference gradient}
			\left\| \Grad \big[\bfphi - \mathcal{R}_{\ve}(\bfphi)\big] \right\|_{L^r(\Omega_{\ve})} \lesssim \ve^{3\left(\frac{1}{r}-\frac{1}{q}\right)} \left( 1+ \ve^{\frac{3(\alpha-1)}{q}- \alpha} \right) \| \bfphi \|_{W_0^{1, q}(\Omega)}. 
		\end{equation}
		
		We are now ready to extend the validity of the weak formulation of the balance of momentum \eqref{WF3} to test functions defined on the whole domain $\Omega$.
		
	\begin{lemma}
		Under the hypotheses of Theorem \ref{main theorem}, the integral identity
		\begin{equation} \label{WF3 ext}
			\begin{aligned}
				\left[\int_{\Omega} \widehat{\vr}_{\ve}\vuex \cdot \bfphi (t,\cdot) \dx\right]_{t=0}^{t=\tau} &= \int_{0}^{\tau} \int_{\Omega} \left[ \widehat{\vr}_{\ve} \widetilde{\vu}_{\ve} \cdot \DerTime \bfphi + (\widehat{\vr}_{\ve} \widetilde{\vu}_{\ve} \otimes \widetilde{\vu}_{\ve}) : \Grad \bfphi\right] \dx \dt \\
				&- \int_{0}^{\tau} \int_{\Omega} \left(\mathbb{S}(\widehat{\vt}_{\ve}, \Grad \widetilde{\vu}_{\ve}): \Grad \bfphi- \widetilde{\vr}_{\ve}^{(1)} \Grad G \cdot \bfphi \right) \dx \dt + \langle \bm{r}_{1,\ve}, \bm{\varphi} \rangle_{\mathcal{M}, C}, 
			\end{aligned}
		\end{equation}
		holds for any $\tau \in [0,T]$ and any $\bfphi \in C^1([0,T] \times \overline{\Omega}; \mathbb{R}^3)$, $\bfphi|_{\partial \Omega}=0$ such that $\Div \bfphi =0$, where the residual measure $\bm{r}_{1,\ve} \in \mathcal{M}([0,T] \times \overline{\Omega}; \mathbb{R}^3)$ satisfies
		\begin{equation} \label{distribution 1}
			| \langle \bm{r}_{1,\ve}, \bm{\varphi} \rangle_{\mathcal{M}, C}| \lesssim  \ve^{\gamma_1} \| \bm{\varphi}\|_{W_0^{1,\infty}((0,T) \times\Omega; \mathbb{R}^3)},
		\end{equation}
		with $\gamma_1$ the positive exponent defined in \eqref{gamma 1} below.
	\end{lemma}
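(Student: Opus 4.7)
The idea is to plug the restricted test function $\mathcal{R}_{\ve}(\bm{\varphi})$ into the weak formulation \eqref{WF3} valid on $\Omega_{\ve}$, and then absorb all the discrepancy between $\bfphi$ and $\mathcal{R}_{\ve}(\bfphi)$ into the residual distribution $\bm{r}_{1,\ve}$. Specifically, fix $\bm{\varphi} \in C^1([0,T] \times \overline{\Omega}; \mathbb{R}^3)$ with $\bm{\varphi}|_{\partial \Omega}=0$ and $\Div \bm{\varphi}=0$. By Proposition \ref{main result}, the function $\mathcal{R}_{\ve}(\bm{\varphi})$ belongs to $W_0^{1,p}(\Omega_{\ve};\mathbb{R}^3)$ for every $p$, is divergence-free, and may be used as a test function in \eqref{WF3}. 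Because $\Div \mathcal{R}_{\ve}(\bfphi)=0$, the pressure term disappears at once, which avoids having to handle the singular factor $\ve^{-2m}\,p(\vr_\ve,\vt_\ve)$.

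Next I would treat the gravitational forcing separately. Splitting $\vr_\ve = \overline{\vr} + \ve^m \vr_\ve^{(1)}$ yields
\begin{equation*}
\frac{1}{\ve^m}\int_{\Omega_\ve} \vr_\ve \Grad G \cdot \mathcal{R}_{\ve}(\bfphi)\,\dx
= \frac{\overline{\vr}}{\ve^m}\int_{\Omega_\ve} \Grad G \cdot \mathcal{R}_{\ve}(\bfphi)\,\dx
+ \int_{\Omega_\ve} \vr_\ve^{(1)} \Grad G \cdot \mathcal{R}_{\ve}(\bfphi)\,\dx,
\end{equation*}
and integration by parts, combined with $\Div \mathcal{R}_{\ve}(\bfphi)=0$ and the boundary condition $\mathcal{R}_{\ve}(\bfphi)|_{\partial\Omega_\ve}=0$, kills the first term. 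This is crucial because $\overline{\vr}/\ve^m$ would otherwise be unbounded. The remaining convective, viscous, inertial and forcing contributions all involve quantities whose extensions by zero (or to the constant $\overline{\vr}$, for the density) to $\Omega\setminus\Omega_\ve$ agree with $\widehat{\vr}_\ve \widetilde{\vu}_\ve$, $\widehat{\vr}_\ve \widetilde{\vu}_\ve\otimes\widetilde{\vu}_\ve$, $\vS(\widehat{\vt}_\ve,\Grad\widetilde{\vu}_\ve)$ and $\widetilde{\vr}_\ve^{(1)} \Grad G$ identically on $\Omega$; this is because $\widetilde{\vu}_\ve$ (hence also $\Grad\widetilde{\vu}_\ve$) and $\widetilde{\vr}_\ve^{(1)}$ vanish on $\Omega \setminus \Omega_\ve$. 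Replacing $\mathcal{R}_\ve(\bfphi)$ with $\bfphi$ in each integral and collecting the remainders, the natural candidate for the residual is
\begin{equation*}
\begin{aligned}
\langle \bm{r}_{1,\ve}, \bm{\varphi}\rangle_{\mathcal{M},C}
&= \Bigl[\int_{\Omega_\ve}\vr_\ve \vu_\ve\cdot(\bfphi-\mathcal{R}_\ve(\bfphi))(t,\cdot)\,\dx\Bigr]_{t=0}^{t=\tau}
-\int_0^\tau\!\!\int_{\Omega_\ve} \vr_\ve\vu_\ve\cdot\partial_t(\bfphi-\mathcal{R}_\ve(\bfphi))\,\dx\dt\\
&\quad - \int_0^\tau\!\!\int_{\Omega_\ve}\bigl[\vr_\ve\vu_\ve\otimes\vu_\ve - \vS(\vt_\ve,\Grad\vu_\ve)\bigr]:\Grad(\bfphi-\mathcal{R}_\ve(\bfphi))\,\dx\dt\\
&\quad + \int_0^\tau\!\!\int_{\Omega_\ve}\vr_\ve^{(1)}\Grad G\cdot(\bfphi-\mathcal{R}_\ve(\bfphi))\,\dx\dt.
\end{aligned}
\end{equation*}

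What remains is to estimate each of these five contributions by a positive power of $\ve$ times $\|\bfphi\|_{W^{1,\infty}}$. For each term I would apply H\"older's inequality together with the uniform bounds in Lemma \ref{uni-bound-Omega} on the nonlinear quantities ($\vr_\ve\vu_\ve\in L^\infty_tL^{5/4}_x$, $\vr_\ve\vu_\ve\otimes\vu_\ve\in L^1_tL^{15/14}_x$, $\vS\in L^q_{t,x}$ for a suitable $q$, $\widetilde{\vr}_\ve^{(1)}\in L^\infty_{t,x}$ up to residual parts) and then invoke \eqref{estimate difference}, \eqref{estimate difference gradient} with cleverly chosen parameters $r,p,q$. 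Since $\bfphi\in W^{1,\infty}$, the Sobolev norms $\|\bfphi\|_{W^{1,q}}$ are bounded independently of $q$, so the choice of $p,q$ in \eqref{estimate difference}, \eqref{estimate difference gradient} can be optimized against the dual H\"older exponent dictated by the given nonlinear term. Taking the minimum of the resulting exponents defines $\gamma_1>0$.

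The main obstacle will be the convective term $\vr_\ve\vu_\ve\otimes\vu_\ve$, which has the worst integrability; the value of $\gamma_1$ is driven by this estimate, and its positivity forces the two-sided restriction $3<\alpha<m$ of Theorem \ref{main theorem}. The gain $\ve^{3(1/r-1/p)}$ of \eqref{estimate difference gradient} must beat the loss $\ve^{3(\alpha-1)/p-\alpha}$ coming from the operator norm of $\mathcal{R}_\ve$, which becomes tight precisely at $\alpha=3$; this is why $\alpha>3$ is needed. The analogous computations for the time-derivative and inertial terms are easier since their factors carry better integrability. Once $\gamma_1>0$ is obtained, passing from smooth compactly supported $\bfphi$ to $C^1([0,T]\times\overline{\Omega};\mathbb{R}^3)$-test functions is standard by density.
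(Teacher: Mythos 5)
Your overall skeleton is the same as the paper's: test \eqref{WF3} with $\mathcal{R}_{\ve}(\bfphi)$, use $\Div \mathcal{R}_{\ve}(\bfphi)=0$ to eliminate both the pressure term and the singular part $\overline{\vr}\,\ve^{-m}\Grad G$ of the forcing, collect the discrepancy $\bfphi-\mathcal{R}_{\ve}(\bfphi)$ into the residual $\bm{r}_{1,\ve}$, estimate it via Lemma \ref{uni-bound-Omega} together with \eqref{estimate difference}, \eqref{estimate difference gradient}, and conclude by density. The time-derivative and gravity remainders indeed go through exactly as you indicate.

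However, there is a genuine gap in your estimation strategy for the convective and viscous remainders. You propose to pair them with the combined bounds $\widehat{\vr}_{\ve}\vuex\otimes\vuex \in L^{1}(0,T;L^{\frac{15}{14}}(\Omega))$ and $\vS(\widehat{\vt}_{\ve},\Grad\vuex)$ in some $L^q$, and then optimize the exponents in \eqref{estimate difference gradient}. This cannot produce a positive power of $\ve$. For the convective term the spatial dual exponent forced by $L^{\frac{15}{14}}$ is $r=15$, and for every admissible $q>15$ the dominant exponent in \eqref{estimate difference gradient} is $3\left(\tfrac{1}{15}-\tfrac{1}{q}\right)+\tfrac{3(\alpha-1)}{q}-\alpha$, which is maximized as $q\downarrow 15$ at the value $-\tfrac15-\tfrac45\alpha<0$; the restriction error in $L^{15}$ thus blows up and its product with the $O(1)$ bound \eqref{e9.2} is uncontrolled. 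The same failure occurs for the viscous term if you only use $\vS\in L^{\frac54}_{t,x}$ or $L^1(0,T;L^{\frac32}(\Omega))$ (the corresponding exponents are $-\tfrac{3+2\alpha}{5}$ and $-1$). The missing idea, which is also the only way the parameter $m$ can enter the estimate at all, is the decomposition $\vr_{\ve}=\overline{\vr}+\ve^m\vr_{\ve}^{(1)}$ inside the inertial and convective remainders, and the analogous bound $\mu(\vt_{\ve})\leq \overline{\mu}(1+\overline{\vt})+\overline{\mu}\,\ve^m\vt_{\ve}^{(1)}$ (similarly for $\eta$) in the viscous one. The leading parts then have much better integrability ($\overline{\vr}\,\vu_{\ve}\otimes\vu_{\ve}\in L^1(0,T;L^3(\Omega))$, $\Grad\vu_{\ve}\in L^2$) and can be paired with small exponents $r=\tfrac32$ or $r=2$ in \eqref{estimate difference gradient}, where $\alpha>3$ gives positive rates such as $\tfrac{\alpha-2}{2}$ and $\tfrac{\alpha-3}{2}$, while the badly integrable parts carry the explicit prefactor $\ve^m$ and $m>\alpha$ renders exponents like $m-\tfrac{9}{10}\alpha$ positive; this is precisely how $\gamma_1$ in \eqref{gamma 1} is produced. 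Without this splitting your scheme does not close. (A minor additional inaccuracy: $\widetilde{\vr}_{\ve}^{(1)}$ is not uniformly bounded in $L^\infty_{t,x}$, only in $L^{\infty}(0,T;L^2)$ for the essential part and $L^{\infty}(0,T;L^{\frac53})$ overall, which is what the paper uses for the forcing remainder.)
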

	\begin{proof}
		First, let $\psi \in C_c^{\infty}(0,T)$ and $\bm{\varphi} \in C_c^{\infty}(\Omega; \mathbb{R}^3)$ be such that $\Div \bm{\varphi}=0$; then, we can use $\psi \ \mathcal{R}_{\ve}(\bm{\varphi})$ as test function in the weak formulation of the balance of momentum  \eqref{WF3}, where $\mathcal{R}_{\ve}$ is the linear operator constructed in Proposition \ref{main result}, obtaining the following identity
			\begin{equation} \label{wfbm with R}
				\begin{aligned}
					\int_{0}^{T} \psi' \int_{\Omega_{\ve}}  \vr_{\ve} \vu_{\ve} \cdot \mathcal{R}_{\ve}(\bfphi) \ \dx\dt &+ \int_{0}^{T} \psi \int_{\Omega_{\ve}} \left[ (\vr_{\ve} \vu_{\ve} \otimes \vu_{\ve}) - \mathbb{S}( \vt_{\ve}, \Grad \vu_{\ve}) \right] : \Grad \mathcal{R}_{\ve}(\bfphi) \ \dx \dt \\
					&+  \frac{1}{\ve^m} \int_{0}^{T} \psi \int_{\Omega_{\ve}} \vr_{\ve} \Grad G \cdot \mathcal{R}_{\ve}(\bfphi) \ \dx \dt =0.
				\end{aligned}
			\end{equation}
			Notice, in particular, that the term involving the pressure vanishes due to the fact that $\Div [ \mathcal{R}_{\ve}(\bm{\varphi})]=0$. Moreover, using the fact that 
			\begin{equation*}
				\int_{\Omega_{\ve}} \Grad G \cdot \mathcal{R}_{\ve}(\bfphi) \ \dx= - \int_{\Omega_{\ve}} G \ \Div [ \mathcal{R}_{\ve}(\bm{\varphi})]  \ \dx =0, 
			\end{equation*}
			we can equivalently write the last term appearing in \eqref{wfbm with R} as 
			\begin{equation*}
				\frac{1}{\ve^m} \int_{\Omega_{\ve}} \vr_{\ve} \Grad G \cdot \mathcal{R}_{\ve}(\bfphi) \ \dx = \int_{\Omega_{\ve}} \frac{\vr_{\ve}- \overline{ \vr}}{\ve^m} \ \Grad G \cdot \mathcal{R}_{\ve}(\bfphi) \ \dx  = \int_{\Omega_{\ve}} \vr_{\ve}^{(1)} \Grad G \cdot \mathcal{R}_{\ve}(\bfphi) \ \dx. 
			\end{equation*}
			If we now sum and subtract from identity \eqref{wfbm with R} the quantity 
			\begin{equation*}
				\int_{0}^{T} \psi' \int_{\Omega} \widehat{\vr}_{\ve} \widetilde{\vu}_{\ve} \cdot \bfphi \ \dx\dt + \int_{0}^{T} \psi \int_{\Omega} \left[ ( \widehat{\vr}_{\ve}\widetilde{\vu}_{\ve} \otimes \widetilde{\vu}_{\ve}) : \Grad \bfphi - \mathbb{S}( \widehat{\vt}_{\ve}, \Grad \widetilde{\vu}_{\ve}) : \Grad \bfphi + \widetilde{\vr}_{\ve}^{(1)} \Grad G \cdot \bfphi \right] \dx \dt,
			\end{equation*}
			we get
		\begin{equation*}
			\int_{0}^{T} \psi' \int_{\Omega} \widehat{\vr}_{\ve} \widetilde{\vu}_{\ve} \cdot \bfphi \ \dx\dt + \int_{0}^{T} \psi \int_{\Omega} \left[ ( \widehat{\vr}_{\ve} \widetilde{\vu}_{\ve} \otimes \widetilde{\vu}_{\ve}) : \Grad \bfphi - \mathbb{S}(\widehat{\vt}_{\ve}, \Grad \widetilde{\vu}_{\ve}) : \Grad \bfphi + \widetilde{\vr}_{\ve}^{(1)} \Grad G \cdot \bfphi \right] \dx \dt = \sum_{k=1}^{4} I_{\ve,k}
		\end{equation*}
		where, from the decomposition $\vr_{\ve}= \overline{\vr} + \ve^{m} \vr_{\ve}^{(1)}$, we have 
		\begin{align*}
			I_{\ve,1}&:= \int_{0}^{T} \psi' \int_{\Omega_{\ve}} \vr_{\ve} \vu_{\ve} \cdot  \big[\bfphi - \mathcal{R}_{\ve}(\bfphi) \big]\ \dx \dt \\
			&= \overline{\vr} \int_{0}^{T} \psi' \int_{\Omega_{\ve}} \vu_\ve \cdot  \big[\bfphi - \mathcal{R}_{\ve}(\bfphi) \big]\ \dx \dt + \ve^{m} \int_{0}^{T} \psi' \int_{\Omega_{\ve}} \vr_{\ve}^{(1)}\vu_\ve \cdot  \big[\bfphi - \mathcal{R}_{\ve}(\bfphi) \big]\ \dx \dt= I_{\ve,1}^{(1)}+ I_{\ve,1}^{(2)}\\
			I_{\ve,2}&:= \int_{0}^{T} \psi \int_{\Omega_{\ve}} (\vr_{\ve} \vu_{\ve} \otimes \vu_{\ve}): \nabla_x \big[\bfphi - \mathcal{R}_{\ve}(\bfphi) \big]  \ \dx \dt \\
			&= \overline{\vr}\int_{0}^{T} \psi \int_{\Omega_{\ve}} ( \vu_{\ve} \otimes \vu_{\ve}): \nabla_x \big[\bfphi - \mathcal{R}_{\ve}(\bfphi) \big]  \ \dx \dt + \ve^m \int_{0}^{T} \psi \int_{\Omega_{\ve}} (\vr_{\ve}^{(1)} \vu_{\ve} \otimes \vu_{\ve}): \nabla_x \big[\bfphi - \mathcal{R}_{\ve}(\bfphi) \big]  \ \dx \dt \\
			&= I_{\ve,2}^{(1)}+ I_{\ve,2}^{(2)}, \\
			I_{\ve,3}&:= -\int_{0}^{T} \psi \int_{\Omega_{\ve}}  \left[\mu(\vt_{\ve}) \left( \Grad \vu_{\ve} + \Grad^{\top} \vu_{\ve} -\frac{2}{d} (\Div \vu_{\ve}) \mathbb{I} \right) + \eta(\vt_{\ve}) (\Div \vu_{\ve}) \mathbb{I} \right]: \nabla_x \big[\bfphi - \mathcal{R}_{\ve}(\bfphi) \big]  \  \dx \dt, \\
			I_{\ve,4}&:=  \int_{0}^{T} \psi \int_{\Omega_{\ve}} \vr_{\ve}^{(1)} \nabla_x G \cdot \big[\bfphi - \mathcal{R}_{\ve}(\bfphi) \big] \ \dx \dt.
		\end{align*}
		Moreover, using the fact that 
		\begin{align*}
			\mu(\vt_{ \ve}) &\leq \overline{\mu}\left( 1+ \overline{\vt} \right) + \overline{\mu} \ve^m \vt_{\ve}^{(1)}, \\
			\eta(\vt_{ \ve}) &\leq \overline{\eta}\left( 1+ \overline{\vt} \right) + \overline{\eta} \ve^m \vt_{\ve}^{(1)},
		\end{align*}
		we obtain
		\begin{align*}
			|I_{\ve,3}| &\leq \left( 1+ \overline{\vt} \right) \int_{0}^{T} |\psi| \int_{\Omega_{\ve}}  \left[\overline{\mu} \left| \Grad \vu_{\ve} + \Grad^{\top} \vu_{\ve} -\frac{2}{d} (\Div \vu_{\ve}) \mathbb{I} \right| + \overline{\eta} |\Div \vu_{\ve}|  \right] \left|\nabla_x \big[\bfphi - \mathcal{R}_{\ve}(\bfphi) \big]\right|  \  \dx \dt, \\
			&+ \ve^{m} \int_{0}^{T} |\psi| \int_{\Omega_{\ve}}  \left[\overline{\mu} |\vt_{\ve}^{(1)}| \left| \Grad \vu_{\ve} + \Grad^{\top} \vu_{\ve} -\frac{2}{d} (\Div \vu_{\ve}) \mathbb{I} \right| + \overline{\eta} |\vt_{\ve}^{(1)}| |\Div \vu_{\ve}|  \right] \left|\nabla_x \big[\bfphi - \mathcal{R}_{\ve}(\bfphi) \big]\right|  \  \dx \dt \\
			& = I_{\ve,3}^{(1)}+ I_{\ve,3}^{(2)},
		\end{align*}
		We can now use the uniform bounds established in Lemma \ref{uni-bound-Omega} and estimates \eqref{estimate difference}, \eqref{estimate difference gradient} to deduce
		\begin{align}
			|I_{\ve,1}^{(1)}| &\lesssim \| \psi' \|_{L^2(0,T)}\| \vuex \|_{L^2 (0,T; L^6(\Omega))} \| \bfphi - \mathcal{R}_{\ve}(\bfphi) \|_{L^{\frac{6}{5}}(\Omega_{\ve})} \lesssim \left[\ve^{3} + \ve^{\frac{3}{2}(\alpha-1)}\right] \| \bfphi \|_{W_0^{1, \frac{6}{5}}(\Omega)}; \label{A1} \\[0.1cm]
			|I_{\ve,1}^{(2)}| &\lesssim \ve^{m} \| \psi' \|_{L^2(0,T)}\| \vrex^{(1)}\vuex \|_{L^2 (0,T; L^{\frac{30}{23}}(\Omega))} \| \bfphi - \mathcal{R}_{\ve}(\bfphi) \|_{L^{\frac{30}{7}}(\Omega_{\ve})} \notag \\
			&\lesssim  \left[\ve^{m+ \frac{2}{5}} + \ve^{m+\frac{3}{10}(\alpha-3)}\right] \| \bfphi \|_{W_0^{1, \frac{30}{13}}(\Omega)}; \label{A2} \\
			|I_{\ve,2}^{(1)}| &\lesssim \| \psi \|_{L^{\infty}(0,T)} \| \vuex \otimes \vuex\|_{L^1(0,T; L^3(\Omega))} \left\| \Grad \big[\bfphi - \mathcal{R}_{\ve}(\bfphi) \big] \right\|_{L^{\frac{3}{2}}(\Omega_{\ve})} \lesssim \left[\ve^{\frac{1}{2}}  + \ve^{\frac{1}{2}(\alpha-2) } \right] \| \bfphi \|_{W_0^{1,2}(\Omega)}; \label{A3} \\
			|I_{\ve,2}^{(2)}| &\lesssim  \ve^{m} \| \psi \|_{L^{\infty}(0,T)} \| \vrex^{(1)} \vuex \otimes \vuex\|_{L^1(0,T; L^{\frac{15}{14}}(\Omega))} \left\| \Grad \big[\bfphi - \mathcal{R}_{\ve}(\bfphi) \big] \right\|_{L^{15}(\Omega_{\ve})} \notag \\
			&\lesssim  \left[\ve^{m+\frac{1}{10} }  + \ve^{m-\frac{9}{10}\alpha}\right] \| \bfphi \|_{W_0^{1, 30}(\Omega)}; \label{A4} \\
			|I_{\ve,3}^{(1)}| & \lesssim  \| \psi \|_{L^2(0,T)}  \| \nabla_x \vuex \|_{L^2(0,T; L^2(\Omega))} \left\| \Grad \big[\bfphi - \mathcal{R}_{\ve}(\bfphi) \big] \right\|_{L^2(\Omega_{\ve})} \notag \\
			&\lesssim \left[\ve^{\frac{3}{2} \frac{\alpha-3}{5\alpha-9}}  + \ve^{ \frac{1}{2} \frac{
					(2\alpha-3)(\alpha-3)}{5\alpha-9}} \right] \| \bfphi \|_{W_0^{1, p}(\Omega)} \quad \mbox{with} \quad p:= 2+ \frac{\alpha-3}{2\alpha-3};  \label{A5}\\
			|I_{\ve,3}^{(2)}| & \lesssim \ve^m\| \psi \|_{L^{\infty}(0,T)}  \| \widehat{\vt}^{(1)} \nabla_x \vuex \|_{L^1(0,T; L^{\frac{3}{2}}(\Omega))} \left\| \Grad \big[\bfphi - \mathcal{R}_{\ve}(\bfphi) \big] \right\|_{L^3(\Omega_{\ve})} \notag \\
			&\lesssim \left[\ve^{m + \frac{1}{2}}  + \ve^{m-\frac{1}{2}\alpha } \right] \| \bfphi \|_{W_0^{1, 6}(\Omega)}; \\
			|I_{\ve,4}| & \lesssim \| \psi \|_{L^1(0,T)} \| \vrex^{(1)} \|_{L^{\infty}(0,T; L^{\frac{5}{3}}(\Omega))} \| \bfphi - \mathcal{R}_{\ve}(\bfphi) \|_{L^{\frac{5}{2}}(\Omega_{\ve})} \lesssim\left[\ve^{\frac{3}{5}} + \ve^{\frac{3}{5}\alpha-1}\right] \| \bfphi \|_{W_0^{1, \frac{15}{8}}(\Omega)}. \label{A7}
		\end{align}
		Due to hypothesis \eqref{hypothesis parameters}, the exponent for $\ve$ is positive in \eqref{A1}-\eqref{A7}. Therefore, condition \eqref{distribution 1} is satisfied choosing
		\begin{equation} \label{gamma 1}
			\gamma_1:= \min \left\{ m-\frac{9}{10}\alpha, \ \frac{3}{2} \frac{\alpha-3}{5\alpha-9} \right\}.
		\end{equation}
		To conclude the proof, it is now enough to use a density argument; indeed, without loss of generality, we can consider test functions of the form $\psi(t)\bm{\varphi}(x)$ with $\psi \in C_c^{\infty}(0,T)$, $\bm{\varphi} \in C_c^{\infty}(\Omega; \mathbb{R}^3)$, and then use the density of the smooth and compactly supported functions in the $L^p$-spaces for any $p\in (1,\infty)$.
	\end{proof}

	\subsection{Entropy equality}
	
	\begin{lemma} \label{Entropy inequality ext}
		Under the hypotheses of Theorem \ref{main theorem}, the integral equality
		\begin{equation} \label{WF4 ext}
			\begin{aligned}
				- \int_{\Omega} &\widetilde{\vr}_{0,\ve} \frac{s( \widehat{\vr}_{0,\ve}, \widehat{\vt}_{0,\ve})- s(\overline{\vr}, \overline{\vt})}{\ve^m} \vphi (0,\cdot) \dx \\
				& = \int_{0}^{T} \int_{\Omega} \left[ \widehat{\vr}_{\ve} \widetilde{s}_{\ve}^{(1)} \big( \DerTime \vphi + \widetilde{\vu}_{\ve} \cdot \nabla_x \vphi \big) - \widehat{\kappa}_{\ve} \Grad \widehat{\ell}_{\ve}^{(1)} \cdot \nabla_x \vphi\right] \dx\dt \\
				& + \ve^{m}\int_{0}^{T} \int_{\Omega} \varphi \left( \widehat{\vt}_{\ve}^{-1}  \vS( \widehat{\vt}_{\ve}, \Grad \widetilde{\vu}_{\ve}): \nabla_x \widetilde{\vu}_{\ve} + \widehat{\kappa}_{\ve} |\Grad  \widehat{\ell}_{\ve}^{(1)}|^2 \right) \dx\dt + \frac{1}{\ve^m} \int_{0}^{T} \int_{\overline{\Omega}} \varphi \ \textup{d}\widetilde{\mathfrak{S}}_{\ve} + \langle \bm{r}_{2,\ve}, \varphi \rangle_{\mathcal{M}, C}
			\end{aligned}
		\end{equation}
		holds for any $\vphi \in C_c^1([0,T) \times \overline{\Omega})$, where the residual measure $\bm{r}_{2,\ve} \in \mathcal{M}([0,T] \times \overline{\Omega}; \mathbb{R}^3)$ satisfies
			\begin{equation} \label{distribution 3}
				| \langle \bm{r}_{2,\ve}, \bm{\varphi} \rangle_{\mathcal{M}, C}| \lesssim  \ve^{\gamma_2} \| \bm{\varphi}\|_{W_0^{1,\infty}((0,T) \times\Omega; \mathbb{R}^3)},
			\end{equation}
			with $\gamma_2$ the positive exponent defined in \eqref{gamma 3} below.
	\end{lemma}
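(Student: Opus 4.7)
The plan is to follow the same template used for the momentum equation in Section \ref{Ex momentum equation}, now adapted to the entropy equality \eqref{WF4} and combined with the Sobolev extension $E_{\ve}$ from Lemma \ref{operator E}. Given any $\varphi \in C_c^1([0,T) \times \overline{\Omega})$, its restriction to $\overline{\Omega}_{\ve}$ is admissible in \eqref{WF4} (the entropy test functions carry no boundary condition on $\partial \Omega_{\ve}$), so I first substitute this restriction and divide the resulting identity by $\ve^m$, thereby working with the perturbations introduced in \eqref{d1}.

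After this normalization, the density-entropy contribution still carries the singular summand $\vr_{\ve} s(\overline{\vr}, \overline{\vt})/\ve^m$. I cancel it by testing the continuity equation \eqref{WF1} against the same $\varphi$, multiplying by $s(\overline{\vr}, \overline{\vt})/\ve^m$, and subtracting. What remains involves only the renormalized entropy $s_{\ve}^{(1)}$. The heat flux rewrites as $\kappa(\vt_{\ve}) \Grad \ell_{\ve}^{(1)} \cdot \Grad \varphi$ via $\Grad \vt_{\ve}/(\vt_{\ve} \ve^m) = \Grad \ell_{\ve}^{(1)}$, and the entropy-production term becomes $\ve^m \varphi \vt_{\ve}^{-1} \vS(\vt_{\ve}, \Grad \vu_{\ve}) : \Grad \vu_{\ve} + \ve^m \varphi \kappa(\vt_{\ve}) |\Grad \ell_{\ve}^{(1)}|^2$. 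Since $\widehat{\kappa}_{\ve} = \kappa(\vt_{\ve})$ and $\widehat{\ell}_{\ve}^{(1)} = \ell_{\ve}^{(1)}$ on $\Omega_{\ve}$, these expressions agree pointwise on $\Omega_{\ve}$ with the corresponding integrands of \eqref{WF4 ext}.

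Next I turn each $\int_{\Omega_{\ve}}$ into $\int_{\Omega} - \int_{\Omega \setminus \Omega_{\ve}}$. By the constructions in Section \ref{Extensions}, the products $\widehat{\vr}_{\ve} \widetilde{s}_{\ve}^{(1)}$, $\widetilde{\vu}_{\ve}$, $\vS(\widehat{\vt}_{\ve}, \Grad \widetilde{\vu}_{\ve}) : \Grad \widetilde{\vu}_{\ve}$, and the measure $\widetilde{\mathfrak{S}}_{\ve}$ vanish on the holes, so the convection, initial data, viscous-dissipation and entropy-production-measure terms extend to $\Omega$ without cost. The only nontrivial contributions on $\Omega \setminus \Omega_{\ve}$ come from the heat flux $\widehat{\kappa}_{\ve} \Grad \widehat{\ell}_{\ve}^{(1)} \cdot \Grad \varphi$ and the squared-gradient term $\ve^m \widehat{\kappa}_{\ve} |\Grad \widehat{\ell}_{\ve}^{(1)}|^2 \varphi$, because there $\widehat{\kappa}_{\ve} = \kappa(\overline{\vt})$ and $\widehat{\ell}_{\ve}^{(1)} = E_{\ve}(\ell_{\ve}^{(1)})$ is in general nonzero. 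Collecting these two hole integrals defines the distribution $\bm{r}_{2,\ve}$.

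The bound \eqref{distribution 3} is then obtained from H\"older's inequality combined with $|\Omega \setminus \Omega_{\ve}| \lesssim \ve^{3(\alpha-1)}$ from \eqref{c3.1} and the uniform control $\|\Grad E_{\ve}(\ell_{\ve}^{(1)})\|_{L^2(0,T; L^2(\Omega))} \leq c$ that follows from \eqref{e3} via Lemma \ref{operator E}: the linear residual gives a factor $|\Omega \setminus \Omega_{\ve}|^{1/2} \lesssim \ve^{3(\alpha-1)/2}$ paired with $\|\Grad \varphi\|_{L^{\infty}}$, whereas the quadratic one is absorbed by its explicit $\ve^m$ prefactor against $\|\varphi\|_{L^{\infty}}$. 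Both exponents are strictly positive under \eqref{hypothesis parameters} and jointly determine $\gamma_2$. The main obstacle I anticipate is not the algebraic manipulations but the sharp bookkeeping of exponents in the residual estimate, since the pair of inequalities so produced is expected to be the sharpest possible in view of the optimality comment preceding the Lemma, reflecting the balance between the $p$-capacity of the holes and the low integrability of the renormalized thermodynamic quantities.
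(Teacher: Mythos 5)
Your proposal is correct and follows essentially the same route as the paper: restrict the test function to $\overline{\Omega}_{\ve}$ in \eqref{WF4}, renormalize by $\ve^m$ (using the continuity equation to absorb the singular $s(\overline{\vr},\overline{\vt})/\ve^m$ part, a step the paper leaves implicit), note that all extended quantities except the heat-flux terms vanish on the holes, and collect the two residual integrals $\kappa(\overline{\vt})\int\int_{\Omega\setminus\Omega_{\ve}}\Grad E_{\ve}(\ell_{\ve}^{(1)})\cdot\Grad\varphi$ and $\ve^m\kappa(\overline{\vt})\int\int_{\Omega\setminus\Omega_{\ve}}|\Grad E_{\ve}(\ell_{\ve}^{(1)})|^2\varphi$ into $\bm{r}_{2,\ve}$. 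Your estimates, via \eqref{e3}, Lemma \ref{operator E} and $|\Omega\setminus\Omega_{\ve}|\lesssim\ve^{3(\alpha-1)}$, reproduce exactly the paper's bounds and the exponent $\gamma_2=\min\{3(\alpha-1)/2,\,m\}$.
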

	
	\begin{proof}
		Let $\varphi \in C^1_c([0,T)\times \overline{\Omega})$; then $\varphi|_{\overline{\Omega}_{\ve}} \in C^1_c([0,T)\times \overline{\Omega}_{\ve})$ can be used as test function in the weak formulation of the entropy equality \eqref{WF4}, obtaining
		\begin{equation*} 
			\begin{aligned}
				- \int_{\Omega} &\widetilde{\vr}_{0,\ve} \frac{s( \widehat{\vr}_{0,\ve}, \widehat{\vt}_{0,\ve})- s(\overline{\vr}, \overline{\vt})}{\ve^m} \vphi (0,\cdot) \dx \\
				& = \int_{0}^{T} \int_{\Omega} \left[ \widehat{\vr}_{\ve}\widetilde{s}_{\ve}^{(1)} \big( \DerTime \vphi + \widetilde{\vu}_{\ve} \cdot \nabla_x \vphi \big) - \widehat{\kappa}_{\ve} \Grad \widehat{\ell}_{\ve}^{(1)} \cdot \nabla_x \vphi\right] \dx\dt \\
				& + \ve^{m}\int_{0}^{T} \int_{\Omega} \varphi \left( \widehat{\vt}_{\ve}^{-1}  \vS( \widehat{\vt}_{\ve}, \Grad \widetilde{\vu}_{\ve}): \nabla_x \widetilde{\vu}_{\ve} + \widehat{\kappa}_{\ve} |\Grad  \widehat{\ell}_{\ve}^{(1)}|^2 \right) \dx\dt + \frac{1}{\ve^m} \int_{0}^{T} \int_{\overline{\Omega}} \varphi \ \textup{d}\widetilde{\mathfrak{S}}_{\ve} \\
				& + \kappa(\overline{\vt}) \left(\int_{0}^{T}\int_{\Omega \setminus \Omega_{\ve}}  \Grad E_{\ve}(\ell_{\ve}^{(1)}) \cdot \Grad \varphi \   \dx \dt + \ve^m \int_{0}^{T}\int_{\Omega \setminus \Omega_{\ve}} | \Grad E_{\ve}(\ell_{\ve}^{(1)})  |^2 \varphi  \ \dx \dt  \right).
			\end{aligned}
		\end{equation*}
		We now focus on the last two integrals; from \eqref{e3}, we have
			\begin{equation*}
				\left| \int_{0}^{T}\int_{\Omega \setminus \Omega_{\ve}}  \Grad E_{\ve}(\ell_{\ve}^{(1)}) \cdot \Grad \varphi \   \dx \dt \right| \lesssim \| \widetilde{\ell}_{\ve}^{(1)}\|_{L^2(0,T; W^{1,2}(\Omega))} \| \varphi\|_{W_0^{1,\infty}} | \Omega \setminus \Omega_\ve|^{\frac{1}{2}} \lesssim \ve^{\frac{3(\alpha-1)}{2}},
			\end{equation*} 
			\begin{equation*}
				\left| \int_{0}^{T}\int_{\Omega \setminus \Omega_{\ve}} | \Grad E_{\ve}(\ell_{\ve}^{(1)})  |^2 \varphi  \ \dx \dt \right| \lesssim  \| \widetilde{\ell}_{\ve}^{(1)}\|_{L^2(0,T; W^{1,2}(\Omega))}^2 \| \varphi\|_{W_0^{1,\infty}} \lesssim 1.
			\end{equation*}
			Therefore, condition \eqref{distribution 3} is verified choosing 
			\begin{equation} \label{gamma 3}
				\gamma_2:= \min \left\{ \frac{3(\alpha-1)}{2}, \ m \right\}.
			\end{equation}
	\end{proof}

	\subsection{Boussinesq relation} \label{Boussinesq relation}
	
	Along with \eqref{WF1}-\eqref{WF4}, when letting $\ve$ go to zero, we need to consider an additional integral identity known as \textit{Boussinesq relation}, obtained by multiplying \eqref{WF3} by $\ve^m$. Similarly to Section \ref{Ex momentum equation}, we have to solve the problem of vanishing test function on the boundary of $\Omega_{\ve}$. Since in this context the $L^{\infty}$-norms of the gradient of the test functions will be necessary, we cannot use the restriction $\mathcal{R}_{\ve}$ as we did in Section \ref{Ex momentum equation}. Instead, we multiply our arbitrary test function defined on the whole $\Omega$ by a suitable smooth and compactly supported function on $\Omega_{\ve}$, cf. equation \eqref{g epsilon}.
	
	\begin{lemma}
		Under the hypotheses of Theorem \ref{main theorem}, the integral equality 
		\begin{equation} \label{WF6}
			\int_{0}^{T} \int_{\Omega} \left(\widetilde{p}_{\ve}^{(1)} \Div \bm{\varphi} + \widehat{\vr}_{\ve} \Grad G \cdot \bm{\varphi} \right)\ \dx \dt =  \langle \bm{r}_{3,\ve}, \bm{\varphi} \rangle_{\mathcal{D}', \mathcal{D}}
		\end{equation}
		holds for any $\bm{\varphi} \in C^{\infty}_c((0,T) \times \Omega; \mathbb{R}^3)$, where the residual distribution $\bm{r}_{3,\ve} \in \mathcal{D}'((0,T)\times \Omega; \mathbb{R}^3)$ satisfies
		\begin{equation} \label{distribution 2}
			| \langle \bm{r}_{3,\ve}, \bm{\varphi} \rangle_{\mathcal{D}', \mathcal{D}}| \lesssim  \ve^{\gamma_3} \| \bm{\varphi}\|_{W_0^{1,\infty}((0,T) \times \Omega; \mathbb{R}^3)},
		\end{equation}
		with $\gamma_3$ the positive exponent defined in \eqref{gamma 2} below.
	\end{lemma}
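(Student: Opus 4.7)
My plan is to follow the strategy of Section \ref{Ex momentum equation}, but since the norm on $\bm{\varphi}$ required here is $W^{1,\infty}$ rather than $W^{1,p}$ with finite $p$, the Allaire-type restriction operator $\mathcal{R}_{\ve}$ would introduce useless $L^p$ losses. Instead, I would multiply $\bm{\varphi}$ by a scalar cutoff. Take $g_{\ve} : \overline{\Omega} \to [0,1]$ smooth, vanishing on $\bigcup_{n=1}^{N(\ve)} \overline{\textup{U}}_{\ve,n}$, identically one on $\Omega \setminus \bigcup_n B(x_{\ve,n}, 2\ve^{\alpha})$, with $|\Grad g_{\ve}| \lesssim \ve^{-\alpha}$. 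The supports of $(1-g_{\ve})$ and of $\Grad g_{\ve}$ have measure $\lesssim N(\ve)\ve^{3\alpha} \lesssim \ve^{3(\alpha-1)}$, hence $\|\Grad g_{\ve}\|_{L^q(\Omega)} \lesssim \ve^{-\alpha + 3(\alpha-1)/q}$ for $q \in [1,\infty]$. For $\bm{\varphi} \in C_c^{\infty}((0,T) \times \Omega; \mathbb{R}^3)$, the product $g_{\ve}\bm{\varphi}$ vanishes on $\partial \Omega_{\ve}$ and is therefore admissible in \eqref{WF3}.

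Testing \eqref{WF3} with $g_{\ve}\bm{\varphi}$ and multiplying by $\ve^m$, the left-hand side vanishes due to the compact time support; the decomposition $p(\vr_{\ve}, \vt_{\ve}) = p(\overline{\vr}, \overline{\vt}) + \ve^m p_{\ve}^{(1)}$ combined with $\int_{\Omega_{\ve}} p(\overline{\vr}, \overline{\vt}) \Div(g_{\ve}\bm{\varphi}) \dx = 0$ then yields
\[
\int_0^T \int_{\Omega_{\ve}} \big[ p_{\ve}^{(1)} g_{\ve} \Div \bm{\varphi} + g_{\ve} \vr_{\ve} \Grad G \cdot \bm{\varphi} + p_{\ve}^{(1)} \Grad g_{\ve} \cdot \bm{\varphi} \big] \dx \dt = \ve^m R_{\ve}(g_{\ve}\bm{\varphi}),
\]
where $R_{\ve}$ collects the time-derivative, convective and viscous contributions from \eqref{WF3}. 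Writing $\vr_{\ve} = \overline{\vr} + \ve^m \vr_{\ve}^{(1)}$, using $\widehat{\vr}_{\ve} = \overline{\vr}$ and $\widetilde{p}_{\ve}^{(1)} = 0$ on $\Omega \setminus \Omega_{\ve}$, and replacing $g_{\ve}$ by $1$ in the two leading terms, one isolates the desired expression $\int \widetilde{p}_{\ve}^{(1)} \Div \bm{\varphi} + \int \widehat{\vr}_{\ve} \Grad G \cdot \bm{\varphi}$, up to three families of errors: (a) terms multiplied by $(1-g_{\ve})$, supported on a set of measure $\lesssim \ve^{3(\alpha-1)}$; (b) the product-rule piece $\int p_{\ve}^{(1)} \Grad g_{\ve} \cdot \bm{\varphi}$; (c) the $\ve^m$-prefactored contributions from $R_{\ve}$.

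Families (a) and (c) are handled by routine H\"older applications using the uniform bounds of Lemma \ref{uni-bound-Omega}, producing positive powers of $\ve$ thanks to $\alpha > 3$ and $m > \alpha$. The main obstacle is family (b): I would split $p_{\ve}^{(1)} = [p_{\ve}^{(1)}]_{\rm ess} + [p_{\ve}^{(1)}]_{\rm res}$. A Taylor expansion of $p$ around $(\overline{\vr}, \overline{\vt})$ combined with \eqref{e1} shows that $[p_{\ve}^{(1)}]_{\rm ess}$ is bounded in $L^{\infty}(0,T;L^2(\Omega))$; paired with $\|\Grad g_{\ve}\|_{L^2} \lesssim \ve^{(\alpha-3)/2}$, it contributes $O(\ve^{(\alpha-3)/2})$. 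By \eqref{e15} one gets $\|[p_{\ve}^{(1)}]_{\rm res}\|_{L^{\infty}(L^1)} \lesssim \ve^m$, which paired with $\|\Grad g_{\ve}\|_{L^{\infty}} \lesssim \ve^{-\alpha}$ contributes $O(\ve^{m-\alpha})$. Both exponents are strictly positive precisely under \eqref{hypothesis parameters}, yielding \eqref{distribution 2} with
\[
\gamma_3 := \min \Big\{ \tfrac{\alpha-3}{2},\ m - \alpha,\ \ldots \Big\} > 0,
\]
where the ellipsis denotes the (larger) contributions coming from families (a) and (c).
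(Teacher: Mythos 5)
Your proposal is correct and is essentially the paper's own argument: the paper likewise multiplies \eqref{WF3} by $\ve^m$, tests with $g_{\ve}\bm{\varphi}$ for a cutoff $g_{\ve}$ vanishing on the holes with $\|1-g_{\ve}\|_{L^q}\lesssim \ve^{3(\alpha-1)/q}$ and $\|\Grad g_{\ve}\|_{L^q}\lesssim \ve^{3(\alpha-1)/q-\alpha}$, kills the constant pressure part by the divergence structure, and splits $p_{\ve}^{(1)}$ into essential and residual parts estimated in $L^{\infty}(L^2)$ and $L^{\infty}(L^1)$ respectively, arriving at the same dominant exponents $\gamma_3=\min\{m-\alpha,\ (\alpha-3)/2\}$. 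The only cosmetic differences are the precise construction of $g_{\ve}$ (transition band of width $\sim\ve^{\alpha}$ versus a sum of bump functions supported in $B_{\ve,n}$) and the observation, worth keeping in mind, that the $(1-g_{\ve})\,p_{\ve}^{(1)}$ error in your family (a) needs the same essential/residual split you carried out for family (b), though it is strictly easier.
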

	
	\begin{proof}
		We start noticing that it is not difficult to construct a cut-off function $\phi_{\ve,n}$ such that 
			\begin{equation}
				\phi_{\ve,n} \in C^{\infty}_c ( B_{\ve,n} ), \quad 0\leq \phi_{\ve, n}\leq 1, \quad \phi \big|_{\overline{U_{\ve,n}}}=1, 
		\end{equation}
		and for any $1\leq p \leq \infty$, 
		\begin{align}
			\| \phi_{\ve,n} \|_{L^p(\mathbb{R}^3)} & \lesssim   \ve^{\frac{3}{p} \alpha}, \label{phi 1}\\
			\| \Grad \phi_{\ve,n} \|_{L^p(\mathbb{R}^3; \mathbb{R}^3)} & \lesssim  \ \ve^{\left( \frac{3}{p}-1 \right)\alpha}. \label{phi 2}
		\end{align}
		Let us now consider the function
		\begin{equation} \label{g epsilon}
			g_{\ve}(x) := 1- \sum_{n=1}^{N(\ve)} \phi_{\ve,n} (x);
		\end{equation}
		clearly $g_{\ve} \in C^{\infty}_c(\Omega_{\ve})$, $0\leq g_{\ve} \leq 1$, and from \eqref{number holes} and estimates \eqref{phi 1}, \eqref{phi 2}, we can deduce that 
		\begin{align}
			\| 1-g_{\ve} \|_{L^p(\Omega)} &\leq N(\ve)^{\frac{1}{p}} \| \phi_{\ve,n} \|_{L^p(\mathbb{R}^3)}  \lesssim \ve^{\frac{3(\alpha-1)}{p}}, \label{g1}\\
			\| \Grad g_{\ve} \|_{L^p(\Omega; \mathbb{R}^3)} &\leq N(\ve)^{\frac{1}{p}} \| \Grad \phi_{\ve,n} \|_{L^p(\mathbb{R}^3; \mathbb{R}^3)}  \lesssim \ve^{\frac{3(\alpha-1)}{p}-\alpha}. \label{g2}
		\end{align}
		
		Let $\bm{\varphi} \in C_c^{\infty}((0,T) \times \Omega; \mathbb{R}^3)$. Then we can multiply \eqref{WF3} by $\ve^m$ and use 
		\begin{equation*}
			\bm{\varphi}_{\ve}(t,x) := g_{\ve}(x) \bm{\varphi}(t,x) 
		\end{equation*}
		as test function in the resulting integral identity, obtaining
			\begin{align*}
				&\int_{0}^{T} \int_{\Omega_{\ve}} p_{\ve}^{(1)} \left( \nabla_x g_{\ve} \cdot \bm{\varphi} + g_{\ve} \Div \bm{\varphi}\right) \dx \dt + \int_{0}^{T} \int_{\Omega_{\ve}} g_{\ve} \vr_{ \ve} \nabla_x G \cdot \bm{\varphi} \ \dx \dt \\
				+ \ve^m \int_{0}^{T} \int_{\overline{\Omega}_{\ve}}& \left(g_{\ve}\vr_{ \ve} \vu_{\ve} \cdot \DerTime \bm{\varphi} + \big[ (\vr_{ \ve} \vu_{\ve} \otimes \vu_{\ve}) - \vS (\vt_{ \ve}, \nabla_x \vu_{\ve}) \big]: ( \nabla_x g_{\ve} \otimes \bm{\varphi} + g_{\ve} \nabla_x \bm{\varphi}) \right) \dx \dt =0. 
			\end{align*}
			Summing and subtracting from the previous identity the quantity 
			\begin{equation*}
				\int_{0}^{T} \int_{\Omega} \left(\widetilde{p}_{\ve}^{(1)} \Div \bm{\varphi} +\widehat{\vr}_{\ve}\Grad G \cdot \bm{\varphi} \right)\ \dx \dt 
			\end{equation*}
			we can equivalently write
		\begin{equation*}
			\int_{0}^{T} \int_{\Omega} \left(\widetilde{p}_{\ve}^{(1)} \Div \bm{\varphi} + \widehat{\vr}_{\ve} \Grad G \cdot \bm{\varphi} \right)\ \dx \dt = \sum_{k=1}^{6} I_{\ve, k},
		\end{equation*}
		where 
		\begin{align*}
			I_{\ve, 1}&:=  \int_{0}^{T} \int_{\Omega_{\ve}} p_{\ve}^{(1)} \ [- \Grad g_{\ve} \cdot \bm{\varphi} + (1-g_{\ve}) \Div \bm{\varphi}]  \  \dx \dt\\
			&= \int_{0}^{T} \int_{\Omega_{\ve}} [p_{\ve}^{(1)}]_{\rm ess} \ [- \Grad g_{\ve} \cdot \bm{\varphi} + (1-g_{\ve}) \Div \bm{\varphi}]  \ \dx \dt \\
			&+ \int_{0}^{T} \int_{\Omega_{\ve}} [p_{\ve}^{(1)}]_{\rm res} \ [- \Grad g_{\ve} \cdot \bm{\varphi} + (1-g_{\ve}) \Div \bm{\varphi}]  \ \dx \dt =  I_{\ve, 1}^{(1)}+ I_{\ve, 1}^{(2)}, \\
			I_{\ve, 2}&:=  \int_{0}^{T} \int_{\Omega_{\ve}} (1-g_{\ve}) \vr_{\ve} \Grad G \cdot \bm{\varphi}  \ \dx \dt, \\
			I_{\ve, 3}&:=  \overline{\vr}\int_{0}^{T} \int_{\Omega \setminus \Omega_{\ve}} \Grad G \cdot  \bm{\varphi} \ \dx \dt, \\
			I_{\ve, 4}&:=- \ve^m  \int_{0}^{T}  \int_{\Omega_{\ve}} g_{\ve}\vr_{ \ve} \vu_{\ve} \cdot \DerTime 
			\bm{\varphi} \  \dx \dt, \\
			I_{\ve, 5}&:=- \ve^m  \int_{0}^{T} \int_{\Omega_{\ve}} (\vr_{ \ve} \vu_{\ve} \otimes \vu_{\ve}) : (\Grad g_{\ve} \otimes \bm{\varphi}+ g_{\ve} \Grad \bm{\varphi}) \ \dx \dt \\
			I_{\ve, 6}&:= \ve^m  \int_{0}^{T}  \int_{\Omega_{\ve}} \vS (\vt_{ \ve}, \Grad \vu_{\ve}) : (\Grad g_{\ve} \otimes \bm{\varphi}+ g_{\ve} \Grad \bm{\varphi}) \  \dx \dt. 
		\end{align*}
		We can now use the uniform bounds established in Lemma \ref{uni-bound-Omega} and estimates \eqref{g1}, \eqref{g2} to get the following bounds.
		\begin{align}
			|I_{\ve,1}^{(1)}| &\lesssim  \left\| [p_{\ve}^{(1)}]_{\rm ess} \right\|_{L^{\infty}(0,T; L^2(\Omega))} \left( \| \Grad g_{\ve} \|_{L^2(\Omega)}+ \| 1-g_{\ve}\|_{L^2(\Omega)} \right) \| \bm{\varphi}\|_{W_0^{1,\infty}} \lesssim \ve^{\frac{1}{2}(\alpha-3)} \| \bm{\varphi}\|_{W_0^{1,\infty}}; \label{p1} \\
			|I_{\ve,1}^{(2)}| &\lesssim \left\| [p_{\ve}^{(1)}]_{\rm res} \right\|_{L^{\infty}(0,T; L^1(\Omega))} \left( \| \Grad g_{\ve} \|_{L^{\infty}(\Omega)}+ \| 1-g_{\ve}\|_{L^{\infty}(\Omega)} \right) \| \bm{\varphi}\|_{W_0^{1,\infty}} \lesssim  \ve^{m-\alpha} \| \bm{\varphi}\|_{W_0^{1,\infty}};  \label{p2}\\
			|I_{\ve,2}| &\lesssim  \| G\|_{W^{1,\infty}(\Omega)} \left\| \widehat{\vr}_{\ve} \right\|_{L^{\infty}(0,T; L^{\frac{5}{3}}(\Omega))}   \| 1-g_{\ve}\|_{L^{\frac{5}{2}}(\Omega)} \| \bm{\varphi}\|_{W_0^{1,\infty}} \lesssim  \ve^{\frac{6}{5}(\alpha-1)} \| \bm{\varphi}\|_{W_0^{1,\infty}}; \\
			|I_{\ve,3}| &\lesssim   \| G\|_{W^{1,\infty}(\Omega)} \| \bm{\varphi}\|_{W_0^{1,\infty}} |\Omega \setminus \Omega_{\ve}|   \lesssim \ve^{3(\alpha-1)} \| \bm{\varphi}\|_{W_0^{1,\infty}}; \label{B6} \\
			|I_{\ve,4}| &\lesssim  \ve^{m} \| \widehat{\vr}_{\ve}\vuex \|_{L^{\infty} (0,T; L^{\frac{5}{4}}(\Omega))} \| g_{\ve}\|_{L^5(\Omega)} \| \bm{\varphi}\|_{W_0^{1,\infty}} \lesssim  \ve^m  \| \bm{\varphi}\|_{W_0^{1,\infty}}; \label{B1}\\
			|I_{\ve,5}| &\lesssim \ve^{m} \|  \widehat{\vr}_{\ve} \vuex \otimes \vuex \|_{L^1(0,T; L^{\frac{15}{14}}(\Omega))}  \| \Grad g_{\ve} \|_{L^{15}(\Omega)} \| \bm{\varphi}\|_{W_0^{1,\infty}} \lesssim \ve^{m-\frac{1}{5} (4\alpha +1)} \| \bm{\varphi}\|_{W_0^{1,\infty}}; \\
			|I_{\ve,6}| &\lesssim  \ve^{m} \| \vS ( \widehat{\vt}_{\ve}, \Grad \vuex) \|_{L^1(0,T; L^{\frac{3}{2}}(\Omega))}  \| \Grad g_{\ve} \|_{L^3(\Omega)} \| \bm{\varphi}\|_{W_0^{1,\infty}} \lesssim  \ve^{m-1}\| \bm{\varphi}\|_{W_0^{1,\infty}}.
		\end{align}
		Due to hypothesis \eqref{hypothesis parameters}, the exponent for $\ve$ is positive in \eqref{B1}-\eqref{B6}. Therefore, condition \eqref{distribution 2} is satisfied choosing
		\begin{equation} \label{gamma 2}
			\gamma_3:= \min \left\{ m-\alpha, \ \frac{\alpha-3}{2}  \right\}.
		\end{equation}
	\end{proof}
	
	\section{Convergence} \label{Convergence}
	
	From the uniform bounds established in Lemma \ref{uni-bound-Omega}, we deduce the following convergences.
	
	\begin{lemma} \label{Convergences}
		Under the hypotheses of Theorem \ref{main theorem}, the following convergences hold for $\ve \to 0$, passing to suitable subsequences as the case may be.
		\begin{align}
			\widetilde{\vr}_{\ve}^{(1)} \overset{*}{\rightharpoonup} \vr^{(1)} \quad & \mbox{in } L^{\infty}(0,T; L^{\frac{5}{3}}(\Omega)), \label{convergence vr1}\\ 
			\widehat{\vt}_{\ve}^{(1)}\overset{*}{\rightharpoonup}  \vt^{(1)} \quad  & \mbox{in } L^{\infty}(0,T; L^2(\Omega)), \label{convergence vt1 1} \\
			\widehat{\vt}_{\ve}^{(1)}\rightharpoonup \vt^{(1)} \quad & \mbox{in } L^2(0,T; W^{1,2}(\Omega)), \label{convergence vt1 2} \\
			\widehat{\vr}_{\ve} \to \overline{\vr} \quad &\mbox{in } L^{\infty}(0,T; L^{\frac{5}{3}}(\Omega)), \label{c1} \\
			\widehat{\vt}_{\ve} \to \overline{\vt} \quad &\mbox{in } L^{\infty}(0,T; L^2(\Omega)) \cap L^2(0,T; W^{1,2}(\Omega)) \label{c2}, \\
			\widetilde{\vu}_{\ve} \rightharpoonup\vu \quad &\mbox{in } L^2(0,T; W^{1,2}(\Omega; \mathbb{R}^3)), \label{c3} \\
			\widehat{\vr}_{\ve} \widetilde{\vu}_{\ve} \overset{*}{\rightharpoonup} \overline{\vr} \vu \quad &\mbox{in } L^{\infty}(0,T; L^{\frac{5}{4}}(\Omega; \mathbb{R}^3)), \label{c4} \\
			\sqrt{\widehat{\vr}_{\ve}} \widetilde{\vu}_{\ve} \overset{*}{\rightharpoonup} \sqrt{\overline{\vr}} \vu \quad &\mbox{in } L^{\infty}(0,T; L^2(\Omega; \mathbb{R}^3)),\\
			\widehat{\vr}_{\ve} \widetilde{\vu}_{\ve} \otimes \widetilde{\vu}_{\ve} \rightharpoonup \overline{\vr} \ \overline{\vu \otimes \vu} \quad &\mbox{in } L^2(0,T; L^{\frac{30}{29}}(\Omega; \mathbb{R}^{3\times 3})), \label{c5} \\
			\vS(\widehat{\vt}_{\ve}, \Grad \widetilde{\vu}_{\ve}) \rightharpoonup \vS(\overline{\vt}, \nabla_x \vu) \quad &\mbox{in } L^{\frac{5}{4}}(0,T; L^{\frac{5}{4}}(\Omega; \mathbb{R}^{3\times 3})), \label{c6} \\
			\widetilde{p}_{\ve}^{(1)} \rightharpoonup \frac{\partial p(\overline{\vr}, \overline{\vt})}{\partial \vr} \vr^{(1)}+ \frac{\partial p(\overline{\vr}, \overline{\vt})}{\partial \vt} \vt^{(1)} \quad &\mbox{in } L^{\infty}(0,T; L^1(\Omega)). \label{c10} \\
			\widehat{\vr}_{\ve} \widetilde{s}_{\ve}^{(1)} \rightharpoonup \overline{\vr}\left( \frac{\partial s(\overline{\vr}, \overline{\vt})}{\partial \vr} \vr^{(1)}+ \frac{\partial s(\overline{\vr}, \overline{\vt})}{\partial \vt} \vt^{(1)}\right) \quad &\mbox{in } L^2(0,T; L^{\frac{30}{23}}(\Omega), \label{c7} \\
			\widehat{\vr}_{\ve} \widetilde{s}_{\ve}^{(1)} \widetilde{\vu}_{\ve} \rightharpoonup \overline{\vr}\left( \frac{\partial s(\overline{\vr}, \overline{\vt})}{\partial \vr} \vr^{(1)}+ \frac{\partial s(\overline{\vr}, \overline{\vt})}{\partial \vt} \vt^{(1)}\right) \vu  \quad &\mbox{in } L^2(0,T; L^{\frac{30}{29}}(\Omega; \mathbb{R}^3)), \label{c8} 	\\
			\widehat{\kappa}_{\ve} \nabla_x \widehat{\ell}_{\ve}^{(1)} \rightharpoonup \frac{\kappa(\overline{\vt})}{\overline{\vt}} \nabla_x \vt^{(1)} \quad &\mbox{in } L^\frac{14}{13}(0,T; L^{\frac{14}{13}}(\Omega; \mathbb{R}^3)). \label{c9} 
		\end{align}
	\end{lemma}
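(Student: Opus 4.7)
The plan is to organize the convergences into three groups according to the argument required: (a) those obtained directly by Banach--Alaoglu from Lemma \ref{uni-bound-Omega}; (b) those following from the scaling $\widehat{\vr}_\ve=\overline\vr+\ve^m\widetilde{\vr}_\ve^{(1)}$ and $\widehat{\vt}_\ve=\overline\vt+\ve^m\widehat{\vt}_\ve^{(1)}$, which yield strong convergence of $\widehat{\vr}_\ve,\widehat{\vt}_\ve$ to constants; and (c) identification of weak limits of nonlinear quantities via Taylor expansion around the equilibrium state $(\overline\vr,\overline\vt)$ combined with the essential/residual decomposition.

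\textbf{Direct extractions and strong limits.} I would first extract subsequences: \eqref{convergence vr1} follows from \eqref{e1} and \eqref{e5}; \eqref{convergence vt1 1}--\eqref{convergence vt1 2} follow from \eqref{e1} and \eqref{e3}; and \eqref{c3} follows from \eqref{e2}, in each case by reflexivity/duality. Next, since $\widehat{\vr}_\ve-\overline\vr=\ve^m\widetilde{\vr}_\ve^{(1)}$ with $\widetilde{\vr}_\ve^{(1)}$ uniformly bounded in $L^{\infty}(0,T;L^{5/3}(\Omega))$ (combining \eqref{e5} and \eqref{e1}), the factor $\ve^m\to0$ produces \eqref{c1}; the analogous computation for $\widehat{\vt}_\ve-\overline\vt$ with the bound \eqref{e3} yields \eqref{c2}. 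With these strong convergences in hand, \eqref{c4} and the $\sqrt{\widehat{\vr}_\ve}$-variant follow by the strong$\,\times\,$weak principle using \eqref{e9}, \eqref{e9.1}, and \eqref{c3}; the uniqueness of the weak limit identifies it as $\overline\vr\vu$. Convergence \eqref{c5} follows similarly from the uniform bound \eqref{e9.2} and the strong limit of $\widehat{\vr}_\ve$, but here, since we cannot upgrade $\widetilde{\vu}_\ve$ to strong convergence, the limit of $\widetilde{\vu}_\ve\otimes\widetilde{\vu}_\ve$ is only characterized as a weak limit and we denote it by $\overline{\vu\otimes\vu}$.

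\textbf{Identification of thermodynamic weak limits.} For \eqref{c10}, \eqref{c7}, \eqref{c9} and the viscous stress \eqref{c6} I would perform the Taylor expansion
\[
f(\vr_\ve,\vt_\ve)-f(\overline\vr,\overline\vt)=\partial_\vr f(\overline\vr,\overline\vt)(\vr_\ve-\overline\vr)+\partial_\vt f(\overline\vr,\overline\vt)(\vt_\ve-\overline\vt)+R_\ve,
\]
where $f\in\{p,s\}$ and the remainder $R_\ve$ is quadratic in $(\vr_\ve-\overline\vr,\vt_\ve-\overline\vt)$. On the essential set $\mathcal{M}_{\rm ess}$, the quantities are bounded so that dividing by $\ve^m$ gives
\[
\bigl[\widetilde{p}_\ve^{(1)}\bigr]_{\rm ess}=\partial_\vr p(\overline\vr,\overline\vt)\bigl[\widetilde{\vr}_\ve^{(1)}\bigr]_{\rm ess}+\partial_\vt p(\overline\vr,\overline\vt)\bigl[\widehat{\vt}_\ve^{(1)}\bigr]_{\rm ess}+O(\ve^m);
\]
on $\mathcal{M}_{\rm res}\cup\mathcal{M}_{\rm holes}$, the bound \eqref{e15} (resp.\ \eqref{e6}) together with \eqref{e4}, \eqref{c3.1} forces the residual/hole contribution to vanish in $L^\infty(L^1)$ (resp.\ $L^2(L^{30/23})$). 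This proves \eqref{c10} and \eqref{c7}. For \eqref{c9}, I write $\nabla_x\widehat{\ell}_\ve^{(1)}=\vt_\ve^{-1}\nabla_x\widehat{\vt}_\ve^{(1)}$ on $\Omega_\ve$, extract weak convergence of $\nabla_x\widehat{\vt}_\ve^{(1)}$ from \eqref{e3}, and use the strong convergence $\kappa(\widehat{\vt}_\ve)/\widehat{\vt}_\ve\to\kappa(\overline\vt)/\overline\vt$ in $L^q$ for any finite $q$ (Vitali's theorem via \eqref{c2} and the pointwise a.e.\ convergence of $\widehat{\vt}_\ve$), with the residual part controlled by \eqref{e12}. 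Convergence \eqref{c8} follows from \eqref{c7} and the weak convergence of $\widetilde{\vu}_\ve$ since the product of a weakly convergent sequence in $L^2(L^{30/23})$ with one in $L^2(L^6)$ is bounded in $L^1(L^{30/29})$, and the limit is identified by passing to the limit against smooth test functions using \eqref{e7}. Finally, \eqref{c6} uses the explicit form \eqref{viscous stress tensor} together with the strong convergence of $\mu(\widehat{\vt}_\ve),\eta(\widehat{\vt}_\ve)$ (again via Vitali and \eqref{shear condition}--\eqref{bulk condition}) and weak convergence of $\Grad\widetilde{\vu}_\ve$.

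\textbf{Principal obstacle.} The main technical subtlety is the identification of limits of products where one factor has only weak compactness. For thermodynamic products, this is circumvented by combining Taylor expansion on $\mathcal{M}_{\rm ess}$ with the sharp control of the residual and hole contributions provided by Lemma \ref{uni-bound-Omega}; the crucial point is that the integrability exponents on the residual set (e.g.\ $L^{5/3}$ for density, $L^4$ for temperature) are strong enough to make the quadratic Taylor remainders and the truncated tails vanish as $\ve\to0$. For the convective term $\widehat{\vr}_\ve\widetilde{\vu}_\ve\otimes\widetilde{\vu}_\ve$, however, no such identification is possible without additional compactness on $\widetilde{\vu}_\ve$, which would require a dedicated acoustic-wave analysis absent in the present framework; this is precisely why the limit in \eqref{c5} is only the weak limit $\overline{\vu\otimes\vu}$ and motivates the dissipative formulation of the target system.
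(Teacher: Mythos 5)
Most of your argument follows the paper's own route (essential/residual/holes splitting, Taylor expansion of the thermodynamic functions on the essential set in the spirit of \cite[Proposition 5.2]{FeiNov}, strong convergence of $\widehat{\vr}_{\ve},\widehat{\vt}_{\ve}$ to constants, strong$\times$weak arguments for \eqref{c4}--\eqref{c6}, and keeping only the weak limit $\overline{\vu\otimes\vu}$ in \eqref{c5}). However, your treatment of \eqref{c8} has a genuine gap. You claim that \eqref{c8} "follows from \eqref{c7} and the weak convergence of $\widetilde{\vu}_{\ve}$" because the product is bounded in $L^1(L^{30/29})$ and "the limit is identified by passing to the limit against smooth test functions using \eqref{e7}." This is not a proof: both factors $\widehat{\vr}_{\ve}\widetilde{s}_{\ve}^{(1)}$ and $\widetilde{\vu}_{\ve}$ converge only \emph{weakly}, and the weak limit of their product need not be the product of the weak limits; the uniform bound \eqref{e7} controls only the residual contribution and gives no identification whatsoever. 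Note that this is exactly the same obstruction you correctly point out for $\widetilde{\vu}_{\ve}\otimes\widetilde{\vu}_{\ve}$, so your own reasoning is internally inconsistent: if boundedness plus testing sufficed, it would also identify the convective term, which you (rightly) say it cannot.

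The paper closes this step by compensated compactness: after isolating the essential parts, the weak limit of $\left[\vr_{\ve}\right]_{\rm ess}\left[s_{\ve}^{(1)}\right]_{\rm ess}\widetilde{\vu}_{\ve}$ is identified via the Div--Curl Lemma \cite[Proposition 3.3]{FeiNov}, using that $\vr_{\ve}s_{\ve}^{(1)}$ enjoys time equicontinuity through the entropy balance \eqref{WF4} (its space-time divergence structure yields precompactness in a negative Sobolev space), while $\widetilde{\vu}_{\ve}$ is bounded in $L^2(0,T;W^{1,2}(\Omega;\mathbb{R}^3))$ by \eqref{c3}; see the passages of \cite[Section 5.3.2]{FeiNov} invoked in the proof. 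Without this (or an Aubin--Lions-type compactness argument for $\widehat{\vr}_{\ve}\widetilde{s}_{\ve}^{(1)}$ in, say, $C_{\rm weak}([0,T];W^{-1,2})$), the identification of the limit in \eqref{c8} — which is essential later when passing to the limit in the entropy equality — does not follow from the estimates you cite. You should add this compensated-compactness step; the remaining convergences in your proposal are argued correctly and essentially as in the paper.
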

	
		\begin{remark}
			We point out that we have used the ``bar"-notation for the term $ \overline{\vu \otimes \vu}$ appearing in convergence \eqref{c5} to underline that \textit{a priori} it does not coincide with $\vu \otimes \vu $. Indeed, despite the fact that the limit of the sequence $\{ \widehat{\vr}_{\ve} \widetilde{\vu}_{\ve} \otimes \widetilde{\vu}_{\ve} \}_{\ve >0}$ exists, we cannot conclude that it is equal to $\overline{ \vr} \vu \otimes \vu$, since the convective term is a non-linear function of the density and velocity. More generally, from now on the ``bar"-notation will be used to represent weak limits of non-linear functions of the unknowns.
	\end{remark}
	\begin{proof}
		The main observation used throughout the proof is that the measures of the holes and of the ``residual" subset tend to zero, as it can be deduced from \eqref{e4} and \eqref{c3.1}; specifically,
		\begin{equation} \label{measure tends to zero}
			\esssup_{t\in (0,T)} \left( | \mathcal{M}_{\rm res}|, | \mathcal{M}_{\rm holes}|\right) \to 0 \quad \mbox{as } \ve \to 0.
		\end{equation}
		
		First of all, we have that 
		\begin{equation*}
			\widetilde{\vr}_{\ve}^{(1)}=\vr_{\ve}^{(1)}\mathbbm{1}_{\Omega_{\ve}}= \left[\vr_{\ve}^{(1)}\right]_{\rm ess}+ \left[\vr_{\ve}^{(1)}\right]_{\rm res};
		\end{equation*}
		noticing that, from \eqref{e4}, \eqref{e5}, for a.e. $t\in (0,T)$
		\begin{equation*}
			\left\| \left[ \vr_{\ve}^{(1)}(t) \right]_{\rm res} \right\|_{L^{\frac{5}{3}}(\Omega)}^{\frac{5}{3}}= \left\| \left[ \frac{\vr_{ \ve}(t)-\overline{ \vr}}{\ve^m} \right]_{\rm res}\right\|_{L^{\frac{5}{3}}(\Omega)}^{\frac{5}{3}} \leq \ve^{-\frac{5}{3}m} \left( \| [\vr_{ \ve}(t)]_{\rm res} \|_{L^{\frac{5}{3}}(\Omega)}^{\frac{5}{3}} + \overline{ \vr}^{\frac{5}{3}}|\mathcal{M}_{\rm res}(t)| \right) \leq c(\overline{ \vr})\ve^{\frac{m}{3}},
		\end{equation*}
		using additionally \eqref{e1}, we can deduce, passing to suitable subsequences as the case may be, 
		\begin{align}
			&\left[\vr_{\ve}^{(1)}\right]_{\rm ess} \overset{*}{\rightharpoonup} \vr^{(1)} \quad \mbox{in } L^{\infty}(0,T; L^2(\Omega)), \label{c11} \\
			&\left[\vr_{\ve}^{(1)}\right]_{\rm res} \overset{*}{\rightharpoonup} 0 \hspace{0.75cm} \mbox{in } L^{\infty}(0,T; L^{\frac{5}{3}}(\Omega)),
		\end{align}
		implying, in particular, \eqref{convergence vr1}. At this point, it is straightforward to deduce the strong convergence \eqref{c1}. 
		
		If we now use the decomposition \eqref{holes part}, we can write
		\begin{equation*}
		 \widehat{\vt}_{\ve}^{(1)}=\vt_{\ve}^{(1)}\mathbbm{1}_{\Omega_{\ve}} + E_{\ve} \big( \vt_{ \ve}^{(1)} \big)\mathbbm{1}_{\Omega \setminus \Omega_{\ve}}= \left[\vt_{\ve}^{(1)}\right]_{\rm ess}+ \left[\vt_{\ve}^{(1)}\right]_{\rm res} + \left[\vt_{\ve}^{(1)}\right]_{\rm holes}.
		\end{equation*}
		From \eqref{e4}, \eqref{e5} and the fact that for a.e. $t\in (0,T)$ $\| [\vt_{ \ve}(t)]_{\rm res}\|_{L^2(\Omega)} \leq \| [\vt_{ \ve}(t)]_{\rm res}\|_{L^4(\Omega)} |\mathcal{M}_{\rm res}(t)|^{\frac{1}{4}}$ as consequence of H\"{o}lder's inequality, we have  for a.e. $t\in (0,T)$
		\begin{equation*}
			\left\| \left[ \vt_{\ve}^{(1)}(t) \right]_{\rm res} \right\|_{L^2(\Omega)}^2= \left\| \left[ \frac{\vt_{ \ve}(t)-\overline{ \vt}}{\ve^m} \right]_{\rm res}\right\|_{L^2(\Omega)}^2 \leq \ve^{-2m} \left( \| [\vt_{ \ve}(t)]_{\rm res}\|_{L^4(\Omega)}^2|\mathcal{M}_{\rm res}(t)|^{\frac{1}{2}}  +  \overline{ \vt}^2|\mathcal{M}_{\rm res}(t)| \right) \leq c(\overline{ \vt});
		\end{equation*}
		therefore, using additionally \eqref{e1}, \eqref{measure tends to zero} and estimate \eqref{e11}, we obtain, passing to suitable subsequences as the case may be, 
		\begin{align}
			&\left[\vt_{\ve}^{(1)}\right]_{\rm ess} \overset{*}{\rightharpoonup} \vt^{(1)} \quad \mbox{in } L^{\infty}(0,T; L^2(\Omega)), \label{c2.1}\\
			&\left[\vt_{\ve}^{(1)}\right]_{\rm res} \overset{*}{\rightharpoonup} 0 \hspace{0.75cm} \mbox{in } L^{\infty}(0,T; L^2(\Omega)), \\
			&\left[\vt_{\ve}^{(1)}\right]_{\rm holes} \overset{*}{\rightharpoonup}  0 \hspace{0.5cm} \mbox{in } L^{\infty}(0,T; L^2(\Omega)),
		\end{align}
		implying, in particular, convergence \eqref{convergence vt1 1}; moreover, from \eqref{e3} we recover \eqref{convergence vt1 2}. 
		From \eqref{convergence vt1 1}, \eqref{convergence vt1 2} it is now straightforward to deduce the strong convergence \eqref{c2}. 
		
		Next, convergences \eqref{c3}--\eqref{c5} can be deduced from  \eqref{e9.1}--\eqref{e2} and \eqref{c1}. Similarly, $\vS(\widetilde{\vt}_{\ve}, \Grad \widetilde{\vu}_{\ve})= \vS(\vt_{\ve}, \Grad \vu_{\ve})\mathbbm{1}_{\Omega_{\ve}}$ and therefore, from the constitutive relations \eqref{shear condition}, \eqref{bulk condition} and convergences \eqref{c2}, \eqref{c3} we can deduce \eqref{c6}.
		
		We now point out that for any given function $f\in C^1(\overline{\mathcal{O}}_{\rm ess})$, denoting 
		\begin{equation}
			f_{\ve}^{(1)}: = \frac{f(\vr_{\ve}, \vt_{ \ve})- f(\overline{ \vr}, \overline{ \vt})}{\ve^m},
		\end{equation}
		due to convergences \eqref{c11}, \eqref{c2.1}, we recover that 
		\begin{equation} \label{essential part}
			\left[ f_{\ve}^{(1)} \right]_{\rm ess} \overset{*}{\rightharpoonup} \frac{\partial f(\overline{\vr}, \overline{\vt})}{\partial \vr} \vr^{(1)}+ \frac{\partial f(\overline{\vr}, \overline{\vt})}{\partial \vt} \vt^{(1)} \quad \mbox{in } L^{\infty}(0,T; L^2(\Omega));
		\end{equation}
		see \cite[Proposition 5.2]{FeiNov} for more details. Therefore, writing
		\begin{equation*}
			\widetilde{p}_{\ve}^{(1)} = p_{\ve}^{(1)} \mathbbm{1}_{\Omega_{\ve}} =\left[ p_{\ve}^{(1)} \right]_{\rm ess} + \left[ p_{\ve}^{(1)} \right]_{\rm res},
		\end{equation*}
		where, from \eqref{e4}, \eqref{e15} we have for a.e. $t\in (0,T)$
		\begin{equation*}
			\left\| \left[ p_{\ve}^{(1)}(t) \right]_{\rm res} \right\|_{L^1(\Omega)} \leq \left\| \left[\frac{p(\vr_{ \ve}, \vt_{ \ve})(t)}{\ve^m}\right]_{\rm res}\right\|_{L^1(\Omega)}+\frac{p(\overline{ \vr}, \overline{ \vt})}{\ve^m}|\mathcal{M}_{\rm res}(t)|  \leq c(\overline{\vr},\overline{ \vt}) \ve^m,
		\end{equation*}
		using additionally \eqref{essential part}, we obtain \eqref{c10}. Similarly, we write 
		\begin{equation*}
			\widehat{\vr}_{\ve} \widetilde{s}_{\ve}^{(1)}=\vr_{\ve} s_{\ve}^{(1)} \mathbbm{1}_{\Omega_{\ve}} =\left[ \vr_{\ve} \right]_{\rm ess} \left[  s_{\ve}^{(1)}\right]_{\rm ess} + \left[ \vr_{ \ve} s_{\ve}^{(1)}\right]_{\rm res};
		\end{equation*}
		from \eqref{e4}, \eqref{e5} and \eqref{e6} we get
		\begin{align*}
			&\left\| \left[ \vr_{ \ve} s_{\ve}^{(1)}\right]_{\rm res} \right\|_{L^2(0,T; L^{\frac{30}{23}}(\Omega))}^2 \\ &\lesssim \left\| \left[ \frac{\vr_{ \ve} s(\vr_{ \ve}, \vt_{ \ve})}{\ve^m}  \right]_{\rm res} \right\|_{L^2(0,T; L^{\frac{30}{23}}(\Omega))}^2 + T \frac{s^2(\overline{ \vr}, \overline{ \vt})}{\ve^{2m}} \esssup_{t\in (0,T)}  \big(\| [\vr_{ \ve}(t)]_{\rm res}\|_{L^{\frac{5}{3}}(\Omega)}^2 |\mathcal{M}_{\rm res}(t)|^{\frac{1}{3}}\big) \\
			&\leq c(\overline{ \vr}, \overline{ \vt}) (1+ \ve^{\frac{16}{15}m});
		\end{align*}
		therefore, using additionally \eqref{c1}, \eqref{measure tends to zero} and \eqref{essential part}, we have 
		\begin{align}
			\left[ \vr_{\ve} \right]_{\rm ess} \left[  s_{\ve}^{(1)}\right]_{\rm ess}  & \overset{*}{\rightharpoonup} \overline{\vr}\left( \frac{\partial s(\overline{\vr}, \overline{\vt})}{\partial \vr} \vr^{(1)}+ \frac{\partial s(\overline{\vr}, \overline{\vt})}{\partial \vt} \vt^{(1)}\right) \quad \mbox{in } L^{\infty}(0,T; L^2(\Omega)), \label{c1.1}\\
			\left[ \vr_{ \ve} s_{\ve}^{(1)}\right]_{\rm res} &\rightharpoonup 0 \quad \mbox{in } L^2(0,T; L^{\frac{30}{23}}(\Omega)). \label{c1.3}
		\end{align}
		We get, in particular, \eqref{c7}. In a similar way, we write 
		\begin{equation*}
			\widehat{\vr}_{\ve} \widetilde{s}_{\ve}^{(1)} \widetilde{\vu}_\ve= \left[ \vr_{\ve} \right]_{\rm ess} \left[  s_{\ve}^{(1)}\right]_{\rm ess}  \vuex+ \left[ \vr_{ \ve} s_{\ve}^{(1)}\right]_{\rm res} \vuex;
		\end{equation*}
		from \eqref{e4}, \eqref{e5}, \eqref{e2} and \eqref{e7} we get 
		\begin{align*}
			&\left\| \left[ \vr_{ \ve} s_{\ve}^{(1)}\right]_{\rm res} \vuex \right\|_{L^2(0,T; L^{\frac{30}{29}}(\Omega))}^2 \\ &\lesssim \left\| \left[ \frac{\vr_{ \ve} s(\vr_{ \ve}, \vt_{ \ve})}{\ve^m}  \right]_{\rm res} \vuex \right\|_{L^2(0,T; L^{\frac{30}{29}}(\Omega))}^2 + \frac{s^2(\overline{ \vr}, \overline{ \vt})}{\ve^{2m}}  \esssup_{t\in (0,T)}  \big(\| [\vr_{ \ve}(t)]_{\rm res}\|_{L^{\frac{5}{3}}(\Omega)}^2 |\mathcal{M}_{\rm res}(t)|^{\frac{2}{5}} \big) \| \vuex\|_{L^2(0,T; L^6(\Omega))}^2 \\
			&\leq c(\overline{ \vr}, \overline{ \vt}) (1+ \ve^{\frac{6}{5}m}),
		\end{align*}
		and hence, from \eqref{c3}, \eqref{measure tends to zero}, \eqref{c1.1}, we obtain 
		\begin{align}
			\left[ \vr_{\ve} \right]_{\rm ess} \left[  s_{\ve}^{(1)}\right]_{\rm ess} \vuex & \overset{*}{\rightharpoonup} \overline{\overline{\vr}\left( \frac{\partial s(\overline{\vr}, \overline{\vt})}{\partial \vr} \vr^{(1)}+ \frac{\partial s(\overline{\vr}, \overline{\vt})}{\partial \vt} \vt^{(1)}\right) \vu} \quad \mbox{in } L^2(0,T; L^\frac{3}{2}(\Omega; \mathbb{R}^3)),  \label{limit rsu}\\
			\left[ \vr_{ \ve} s_{\ve}^{(1)}\right]_{\rm res} \vuex &\rightharpoonup 0 \quad \mbox{in } L^2(0,T; L^{\frac{30}{29}}(\Omega; \mathbb{R}^3)).
		\end{align}
		
		Moreover, we can write 
		\begin{align*}
			\widehat{\kappa}_{\ve} \nabla_x  \widehat{\ell}_{\ve}^{(1)} &= \kappa(\vt_{ \ve}) \Grad \ell_{\ve}^{(1)} \mathbbm{1}_{\Omega_{\ve}} + \kappa(\overline{\vt}) \nabla_x E_{\ve} \big( \ell_{\ve}^{(1)}\big) \mathbbm{1}_{\Omega \setminus \Omega_{\ve}} \\[0.1cm]
			&= \left[ \frac{\kappa(\vt_{\ve})}{\vt_{\ve}} \right]_{\rm ess} \Grad \left( \frac{\vt_{\ve}-\overline{\vt}}{\ve^m} \right) + \left[ \frac{\kappa(\vt_{\ve})}{\vt_{\ve}} \Grad \left( \frac{\vt_{\ve}}{\ve^m} \right) \right]_{\rm res}  + \kappa(\overline{\vt}) \Grad [\ell_{\ve}^{(1)}]_{\rm holes } ,
		\end{align*}
		and thus, in virtue of \eqref{e12}, \eqref{e3.1}, \eqref{c2}, \eqref{measure tends to zero}, \eqref{c2.1}, \eqref{convergence vt1 2}  and estimate \eqref{e10}, we get
		\begin{align}
			\left[ \frac{\kappa(\vt_{\ve})}{\vt_{\ve}} \right]_{\rm ess} \Grad \left( \frac{\vt_{\ve}-\overline{\vt}}{\ve^m} \right) &\rightharpoonup \frac{\kappa(\overline{\vt})}{\overline{\vt}} \nabla_x \vt^{(1)} \quad \mbox{in } L^2(0,T; L^2(\Omega; \mathbb{R}^3)), \\
			\left[ \frac{\kappa(\vt_{\ve})}{\vt_{\ve}} \Grad \left( \frac{\vt_{\ve}}{\ve^m} \right) \right]_{\rm res}  &\rightharpoonup 0 \quad \mbox{in } L^\frac{14}{13}(0,T; L^{\frac{14}{13}}(\Omega; \mathbb{R}^3)), \\
			\Grad [\ell_{\ve}^{(1)}]_{\rm holes } &\rightharpoonup 0 \quad \mbox{in } L^2(0,T; L^2(\Omega; \mathbb{R}^3)), \label{c1.6}
		\end{align}
		leading to \eqref{c9}. 
		
		Finally, as consequence of the Div-Curl Lemma \cite[Proposition 3.3]{FeiNov}, we obtain 
		\begin{equation*}
			\overline{\overline{\vr}\left( \frac{\partial s(\overline{\vr}, \overline{\vt})}{\partial \vr} \vr^{(1)}+ \frac{\partial s(\overline{\vr}, \overline{\vt})}{\partial \vt} \vt^{(1)}\right) \vu} = \overline{\vr}\left( \frac{\partial s(\overline{\vr}, \overline{\vt})}{\partial \vr} \vr^{(1)}+ \frac{\partial s(\overline{\vr}, \overline{\vt})}{\partial \vt} \vt^{(1)}\right) \vu
		\end{equation*}
		and hence \eqref{c8}; notice, in particular, that we can repeat the same passages performed in \cite[Section 5.3.2, (iii)]{FeiNov} since only the essential parts of the functions are involved.
	\end{proof}
	
	We are ready to let $\ve \to 0$ in the weak formulations of the problem on the homogenized domain $\Omega$ and get the first result of our work.
	
	\begin{proposition} \label{First result}
		Under the hypotheses of Theorem \ref{main theorem}, passing to suitable subsequences as the case may be,
		\begin{align*}
			\vuex &\rightharpoonup \vu \hspace{0.7cm} \mbox{in } L^2(0,T; W^{1,2}(\Omega; \mathbb{R}^3)), \\
			\widehat{\vt}_{\ve}^{(1)} &\rightharpoonup  \vt^{(1)} \quad \mbox{in } L^2(0,T; W^{1,2}(\Omega)),
		\end{align*}
		where $[\vu, \vt^{(1)}]$ is a dissipative solution to the Oberbeck-Boussinesq system emanating from $[\vu_0, \vt_{0}^{(1)}]$ in the sense of Definition \ref{dissipative solution OB}, with $\vu_0, \vt_{0}^{(1)}$ the weak limits appearing in \eqref{i2}, \eqref{convergence initial temperatures}, respectively.
	\end{proposition}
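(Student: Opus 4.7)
The plan is to pass to the limit $\ve \to 0$ in the extended integral identities \eqref{WF1 ext}, \eqref{WF3 ext}, \eqref{WF4 ext}, \eqref{WF6} together with the Helmholtz-based energy balance \eqref{energy inequality with Helmholtz}, using the convergences collected in Lemma \ref{Convergences} and the quantitative residual bounds \eqref{distribution 1}, \eqref{distribution 2}, \eqref{distribution 3}. First, passing to the limit in \eqref{WF1 ext} with \eqref{c1} and \eqref{c4} yields $\partial_t \overline{\vr} + \Div(\overline{\vr}\vu) = 0$, hence $\Div \vu = 0$. Likewise, the limit of \eqref{WF6} with \eqref{c10}, \eqref{c1} and \eqref{distribution 2} gives
\begin{equation*}
	\int_0^T \int_\Omega \left[ \frac{\partial p(\overline{\vr},\overline{\vt})}{\partial \vr}\vr^{(1)} + \frac{\partial p(\overline{\vr},\overline{\vt})}{\partial \vt}\vt^{(1)} - \overline{\vr}G \right] \Div \bfphi \, \dx \dt = 0
\end{equation*}
for all $\bfphi \in C_c^\infty((0,T)\times\Omega; \mathbb{R}^3)$, which combined with the well-prepared hypothesis \eqref{well-prepared data} at $t=0$, the zero-mean condition \eqref{zero mean potential}, and the identities $\int_\Omega \widetilde{\vr}_\ve^{(1)} = \int_\Omega \widehat{\vt}_\ve^{(1)} = 0$ (inherited from \eqref{integral initial density and temperature} through the continuity and entropy equations), forces the bracket to vanish a.e.\ in $(0,T)\times \Omega$: this is the limiting Boussinesq relation, needed repeatedly below.

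For items (ii) and (iii) of Definition \ref{dissipative solution OB}, I test \eqref{WF3 ext} against a divergence-free $\bfphi$ vanishing on $\partial \Omega$ and let $\ve \to 0$, using \eqref{c3}, \eqref{c5}, \eqref{c6}, \eqref{convergence vr1} and \eqref{distribution 1}. The convective nonlinearity produces the positive tensor defect
\begin{equation*}
	\mathfrak{R} := \overline{\vr}\big(\overline{\vu \otimes \vu} - \vu \otimes \vu\big),
\end{equation*}
whose positivity is inherited from $\widehat{\vr}_\ve(\widetilde{\vu}_\ve - \vu)\otimes (\widetilde{\vu}_\ve - \vu) \geq 0$. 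The Boussinesq relation lets me rewrite $\vr^{(1)}\Grad G = -(A/\overline{\vr})\vt^{(1)}\Grad G + \Grad \Phi$ for a scalar potential $\Phi$, and the gradient part disappears against divergence-free test functions, yielding \eqref{DF OB 1}. For the temperature equation, I pass to the limit in \eqref{WF4 ext}: the term scaled by $\ve^m$ vanishes, $\ve^{-m}\widetilde{\mathfrak{S}}_\ve \to 0$ by \eqref{e13}, and \eqref{c7}--\eqref{c9} combined with \eqref{distribution 3} identify the remaining limits. Invoking the Maxwell identity and eliminating $\vr^{(1)}$ via the Boussinesq relation reduces the limit to \eqref{OB 2} with source $\overline{\vt}A\Grad G \cdot \vu$. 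Since $\vu \in L^2(0,T; L^6(\Omega; \mathbb{R}^3))$, this source lies in $L^{5/4}((0,T)\times\Omega)$, and parabolic maximal $L^p$-regularity for the Neumann heat equation together with the Slobodeckii datum \eqref{Slobodeckii condition} delivers the class \eqref{regularity class}.

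Finally, for items (iv) and (v), I Taylor-expand $H_{\overline{\vt}}$ in \eqref{energy inequality with Helmholtz} to second order around $(\overline{\vr},\overline{\vt})$; using $\partial^2_{\vr\vr}H_{\overline{\vt}}(\overline{\vr},\overline{\vt}) = \partial_\vr p(\overline{\vr},\overline{\vt})/\overline{\vr}$ and $\partial^2_{\vt\vt}H_{\overline{\vt}}(\overline{\vr},\overline{\vt}) = \overline{\vr}c_v/\overline{\vt}$, the quadratic part becomes $\tfrac12[\partial_\vr p\, |\vr^{(1)}|^2/\overline{\vr} + \overline{\vr}c_v |\vt^{(1)}|^2/\overline{\vt}]$ plus a residual controlled by \eqref{e4}, \eqref{e5}, \eqref{e15}. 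Eliminating $|\vr^{(1)}|^2$ through the Boussinesq relation, absorbing the gravitational contribution $-(\vr_\ve - \overline{\vr})G/\ve^m$, and invoking the Mayer identity $c_p = c_v + a(\overline{\vr},\overline{\vt})\overline{\vt}\, \partial_\vt p(\overline{\vr},\overline{\vt})/\overline{\vr}$, the coefficient in front of $|\vt^{(1)}|^2$ collapses to $c_p\overline{\vr}/(2\overline{\vt})$. Weak lower semicontinuity of the $L^2$-norms and of the dissipation integrals then produces \eqref{DF OB 2} with
\begin{equation*}
	\mathfrak{E} := \tfrac12\overline{\vr}\big(\overline{|\vu|^2} - |\vu|^2\big),
\end{equation*}
the thermal contribution passing to the limit without defect thanks to strong $L^2_t L^2_x$-compactness of $\{\widehat{\vt}^{(1)}_\ve\}$ obtained from \eqref{e3} via an Aubin--Lions argument. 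The compatibility condition \eqref{compatibility condition} is then the pointwise identity $\trace[\overline{\vr}(\overline{\vu\otimes \vu} - \vu \otimes \vu)] = \overline{\vr}(\overline{|\vu|^2} - |\vu|^2) = 2\mathfrak{E}$. The main obstacle is the reduction of the entropy identity to the clean form \eqref{OB 2}: converting $\partial_\vr s\, \vr^{(1)} + \partial_\vt s\, \vt^{(1)}$ into $c_p \vt^{(1)}/\overline{\vt}$ modulo a total time derivative and identifying the coupling source $\overline{\vt}A\Grad G \cdot \vu$ rely delicately on the just-derived Boussinesq relation, and one must verify that no additional defect measure is generated in the passage from entropy to temperature.
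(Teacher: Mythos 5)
Your overall strategy coincides with the paper's: pass to the limit in the extended identities \eqref{WF1 ext}, \eqref{WF3 ext}, \eqref{WF4 ext}, \eqref{WF6} using Lemma \ref{Convergences}, derive the Boussinesq relation, introduce the defects $\mathfrak{R}$, $\mathfrak{E}$, get the heat equation pointwise by maximal $L^p$-regularity, and close with the Helmholtz-based energy balance \eqref{energy inequality with Helmholtz}. However, there are genuine gaps. First, your justification of the Boussinesq relation relies on the claim that $\int_{\Omega}\widehat{\vt}_{\ve}^{(1)}\,\dx=0$ is ``inherited from \eqref{integral initial density and temperature} through the \dots entropy equations''; this is false at the $\ve$-level: the temperature obeys no conservation law, the entropy balance \eqref{WF4} carries nonnegative production terms (the $\ve^{2m}$-scaled dissipation and the measure $\mathfrak{S}_{\ve}$), and the extension operator $E_{\ve}$ does not preserve zero means. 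What is true, and what the paper uses, is that mass conservation gives $\int_{\Omega}\widetilde{\vr}_{\ve}^{(1)}(\tau)\,\dx=0$, hence $\int_{\Omega}\vr^{(1)}(\tau)\,\dx=0$ via the limit continuity equation \eqref{C2}; then the \emph{limit} entropy identity \eqref{C1}, in which all production terms have disappeared, conserves $\int_{\Omega}\overline{\vr}\big(\partial_{\vr}s\,\vr^{(1)}+\partial_{\vt}s\,\vt^{(1)}\big)\,\dx$, and only this, combined with \eqref{integral initial density and temperature}, yields $\int_{\Omega}\vt^{(1)}(\tau)\,\dx=0$ and hence kills the time-dependent constant in the Boussinesq relation. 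Without this repair your argument for the a.e. identity $\partial_{\vr}p\,\vr^{(1)}+\partial_{\vt}p\,\vt^{(1)}=\overline{\vr}G$ is incomplete. (Also a small slip: substituting \eqref{rho 1} gives the coefficient $A=\overline{\vr}\,a(\overline{\vr},\overline{\vt})$ in front of $\vt^{(1)}\Grad G$, not $A/\overline{\vr}$.)

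Second, in the energy part your appeal to Aubin--Lions is unjustified: \eqref{e3} is a purely spatial bound, and no uniform bound on $\DerTime\widehat{\vt}_{\ve}^{(1)}$ in any negative space is available for the temperature alone (only the entropy combination satisfies an equation, and it involves $\vr_{\ve}^{(1)}$, the measure $\ve^{-m}\widetilde{\mathfrak{S}}_{\ve}$ and the extension errors). Fortunately strong compactness is not needed, since \eqref{DF OB 2} is an inequality and weak lower semicontinuity of the quadratic Helmholtz part suffices, as in \cite[Section 5.5.4]{FeiNov}; but you should not base the absence of a thermal defect on a compactness statement you cannot prove. The genuinely delicate step you gloss over with ``weak lower semicontinuity of the dissipation integrals'' is the liminf of the thermal dissipation: at the $\ve$-level it lives only on $\Omega_{\ve}$ with the fluctuating weight $\kappa(\vt_{\ve})/\vt_{\ve}^{2}$, whereas $\Grad\vt^{(1)}$ is the weak limit of the \emph{extended} gradient on all of $\Omega$, whose restriction to the holes is bounded but not small in $L^2$. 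The paper resolves this with the $(2-\delta)$-power trick of \cite{PokSkr}: the hole contribution is controlled by $\ve^{\frac{3}{2}\delta(\alpha-1)}$, the essential-set coefficient is frozen, the error $I(\delta)$ vanishes, and Vitali's theorem recovers the quadratic dissipation as $\delta\to0$; your write-up does not engage with this point, which is where the perforation actually enters. Finally, the regularity class \eqref{regularity class} also requires $\vu\in C_{\rm weak}([0,T];L^2(\Omega;\mathbb{R}^3))$ (needed to make sense of the instantaneous terms in \eqref{DF OB 1} and of the initial condition); the paper proves this via a Helmholtz-projection argument, and your proposal omits it.
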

	
	\begin{proof}
		\textit{Passage to the limit in the continuity equation.} In view of \eqref{i1}, \eqref{c1} and \eqref{c4}, passing to the limit in \eqref{WF1 ext}, we obtain that 
		\begin{equation*}
			\int_{0}^{\tau} \int_{\Omega} \vu \cdot \Grad \varphi \ \dx \dt=0
		\end{equation*}
		holds for any $\tau \in [0,T]$ and any $\varphi \in C^1 ([0,T] \times \overline{\Omega})$; in particular, we get that condition (i) of Definition \ref{dissipative solution OB} is satisfied. Additionally, if we divide \eqref{WF1 ext} by $\ve^m$ and let $\ve \to 0$, from \eqref{i1}, \eqref{c3} and \eqref{convergence vr1} we recover that 
		\begin{equation} \label{C2}
			\left[\int_{\Omega} \vr^{(1)} \varphi(t,\cdot) \ \dx \right]_{t=0}^{t=\tau} =  \int_{0}^{\tau} \int_{\Omega} \left[ \vr^{(1)} \DerTime \varphi + \vr^{(1)} \vu \cdot \Grad \varphi \right] \dx \dt
		\end{equation}
		holds for any $\tau \in [0,T]$ and any $\varphi \in C^1 ([0,T] \times \overline{\Omega})$. Therefore, choosing properly the test function $\varphi$ in \eqref{C2}, from \eqref{integral initial density and temperature} we can deduce that for a.e. $\tau \in (0,T)$
		\begin{equation} \label{integral density zero}
			\int_{\Omega} \vr^{(1)} (\tau, \cdot) \ \dx =0. 
		\end{equation}
		
		\textit{Passage to the limit in the momentum equation.} 
		Putting together convergences \eqref{i1},\eqref{i2}, \eqref{c4}, \eqref{c5}, \eqref{c6} and \eqref{convergence vr1}, we are ready to pass to the limit in \eqref{WF3 ext}, obtaining that the integral identity
			\begin{equation} \label{E1} 
				\begin{aligned}
					\overline{ \vr}\int_{0}^{T} \int_{\Omega} \left(\vu\cdot \DerTime \bfphi +  \overline{\vu \otimes \vu} : \Grad \bfphi \right) \dx \dt = \mu(\overline{ \vt}) &\int_{0}^{T} \int_{\Omega} \left( \Grad \vu + \Grad^{\top} \vu \right) : \Grad \bfphi  \ \dx \dt \\
					- &\int_{0}^{T} \int_{\Omega}  \vr^{(1)} \Grad G \cdot \bfphi \ \dx \dt 
				\end{aligned}
			\end{equation}
			holds for any $\bfphi \in C^1_c((0,T)\times \Omega; \mathbb{R}^3)$ such that $\Div \bfphi =0$. We can now show that 
			\begin{equation*}
				\vu \in C_{\rm weak} ([0,T]; L^2(\Omega; \mathbb{R}^3),
			\end{equation*}
			which, together with \eqref{c3}, provides the regularity class \eqref{regularity class} for $\vu$. 
			To this end, one has to prove that for any fixed $\bm{\phi} \in C^{\infty}_c(\Omega; \mathbb{R}^3)$, the time-dependent function 
			\begin{equation*}
				F_{\bm{\phi}} (t):= \int_{\Omega} \vu(t,x) \cdot \bm{\phi}(x) \ \dx
			\end{equation*} 
			is absolutely continuous on $[0,T]$; the latter will follow by showing that $F_{\bm{\phi}} \in W^{1,q} (0,T)$ for some $q \in [1,\infty]$. Clearly, from the fact that $\vu \in L^{\infty}(0,T; L^2(\Omega; \mathbb{R}^3))$, $F_{\bm{\phi}}$ is bounded. Moreover, introducing the Helmholtz projector $\vv \mapsto \textbf{H}(\vv)$ such that $\Div \textbf{H}(\vv)=0$, it is known that $\textbf{H}$ maps continuously the $L^p$ and $W^{1,p}$-spaces into themselves for any $p\in (1,\infty)$; see e.g. \cite[Section 11.7]{FeiNov}. Using the fact that $\vu$ is solenoidal and therefore $\vu=\textbf{H}(\vu)$, we get that for any $\psi \in C_c^{\infty}(0,T)$ 
			\begin{equation*}
				\int_{0}^{T} F_{\bm{\phi}} (t) \psi'(t) \ \dt = \int_{0}^{T} \int_{\Omega} \textbf{H}(\vu) \cdot \bm{\phi} \ \psi' \ \dx \dt = \int_{0}^{T} \int_{\Omega} \vu \cdot \textbf{H}(\bm{\phi}) \ \psi' \ \dx \dt.
			\end{equation*}
			Finally, using $\bfphi(t,x) = \textbf{H}(\bm{\phi})(x)  \psi(t) $ as test function in \eqref{E1}, form convergences (5.3), (5.6) and (5.15) we get that 
			\begin{equation*}
				\left| \int_{0}^{T} F_{\bm{\phi}} (t) \psi'(t) \ \dt \right| \leq C \| \psi \|_{L^2(0,T)} \quad \mbox{for any} \  \psi \in C_c^{\infty}(0,T),
			\end{equation*}
			implying in particular that $F_{\bm{\phi}} \in W^{1,2}(0,T)$. Next, we introduce the measure 
		\begin{equation} \label{defect R}
			\begin{aligned}
				\mathfrak{R} &\in L^{\infty}(0,T; \mathcal{M}^{+} (\overline{\Omega}; \mathbb{R}^{3\times 3}_{\rm sym})), \\
				\textup{d}\mathfrak{R} &:= \overline{\vr} \left( \overline{\vu \otimes \vu} - \vu \otimes \vu\right) \dx,
			\end{aligned}
		\end{equation}
		where the positivity of $\mathfrak{R}$ follows from the fact that for any $\bm{\xi} \in \mathbb{R}^3$ and any open set $\mathcal{B} \subset \Omega$ we have 
		\begin{align*}
			(\overline{\vu \otimes \vu}- \vu \otimes \vu) : (\bm{\xi} \otimes \bm{\xi}) &= \lim_{\ve \rightarrow 0} \left[(\vuex \otimes \vuex) : (\bm{\xi} \otimes \bm{\xi})\right] - (\vu \otimes \vu) :  (\bm{\xi} \otimes \bm{\xi}) \\
			&= \lim_{\ve\to 0} | \vuex \cdot \bm{\xi}|^2 - |\vu \cdot \bm{\xi}|^2 = \overline{|\vu \cdot \bm{\xi}|^2}- |\vu \cdot \bm{\xi}|^2
		\end{align*}
		in $\mathcal{D}'((0,T) \times \mathcal{B})$ and $\overline{|\vu \cdot \bm{\xi}|^2} \leq |\vu \cdot \bm{\xi}|^2$ due to the convexity of the function $\vu \mapsto |\vu \cdot \bm{\xi}|^2$; see e.g. \cite[Theorem 2.1.1]{Fei}. Noticing that 
		\begin{equation*}
			\int_{\Omega} G \Grad G \cdot \bm{\varphi} \ \dx = \frac{1}{2} \int_{\Omega} \Grad |G|^2 \cdot \bm{\varphi} \ \dx =  - \frac{1}{2} \int_{\Omega} G^2 \  \Div \bm{\varphi} \ \dx =0,
		\end{equation*}
		and, due to \eqref{OB 1} $\Div (\vu \otimes \vu)= \vu \cdot \Grad \vu$, \eqref{E1} can be rewritten as 
		\begin{equation} \label{C3}
			\begin{aligned}
				\overline{\vr}\left[\int_{\Omega} \vu \cdot \bfphi (t,\cdot) \dx \right]_{t=0}^{t=\tau} &= \overline{\vr}\int_{0}^{\tau} \int_{\Omega} \left[ \vu\cdot \DerTime \bfphi - (\vu \cdot \Grad) \vu \cdot  \bfphi  \right] \dx \dt  - \mu(\overline{ \vt}) \int_{0}^{\tau} \int_{\Omega} (\Grad \vu+ \Grad^{\top} \vu ) : \Grad \bm{\varphi} \ \dx \dt \\
				&+ \int_{0}^{\tau} \int_{\Omega} \left(\vr^{(1)} - \frac{\overline{\vr}}{\partial_{\vr}p(\overline{\vr}, \overline{\vt})} G\right)\Grad G \cdot \bfphi \ \dx \dt+ \int_{0}^{\tau} \int_{\overline{\Omega}} \Grad \bm{\varphi}: \textup{d} \mathfrak{R} \ \dt,
			\end{aligned}
		\end{equation}
		for any $\tau \in [0,T]$ and any $\bm{\varphi} \in C^1 ([0,T] \times \overline{\Omega}; \mathbb{R}^3)$, $\bm{\varphi}|_{\partial \Omega}=0$ such that $\Div \bm{\varphi}=0$.
		
		\textit{Passage to the limit in the entropy equation.} Similarly, due to convergences \eqref{i1}, \eqref{convergence initial temperatures}, \eqref{c7}--\eqref{c9}, letting $\ve \to 0$ in \eqref{WF4 ext} we obtain that 
		\begin{equation} \label{C1} 
			\begin{aligned}
				- \int_{\Omega} &\overline{\vr}\left( \frac{\partial s(\overline{\vr}, \overline{\vt})}{\partial \vr} \vr^{(1)}_0+ \frac{\partial s(\overline{\vr}, \overline{\vt})}{\partial \vt} \vt^{(1)}_0\right)  \vphi (0,\cdot) \dx \\
				& = \int_{0}^{T} \int_{\Omega} \left[\overline{\vr}\left( \frac{\partial s(\overline{\vr}, \overline{\vt})}{\partial \vr} \vr^{(1)}+ \frac{\partial s(\overline{\vr}, \overline{\vt})}{\partial \vt} \vt^{(1)}\right)  \big( \DerTime \vphi + \vu \cdot \nabla_x \vphi \big) - \frac{\kappa(\overline{\vt})}{\overline{\vt}} \nabla_x \vt^{(1)} \cdot \Grad \varphi\right] \dx\dt 
			\end{aligned}
		\end{equation}
		holds for any $\vphi \in C_c^1([0,T) \times \overline{\Omega})$. In particular, choosing properly $\varphi$ in \eqref{C1}, from \eqref{integral initial density and temperature} and \eqref{C2} we can deduce that for a.e. $\tau \in (0,T)$
		\begin{equation} \label{integral temperature zero}
			\int_{\Omega} \vt^{(1)} (\tau, \cdot) \ \dx =0. 
		\end{equation}
		
		\textit{Passage to the limit in the Boussinesq equation.} Letting $\ve \to 0$ in \eqref{WF6}, in virtue of convergences \eqref{c1} and \eqref{c10}, we obtain that
		\begin{equation*}
			\int_{0}^{T} \int_{\Omega} \left( \frac{\partial p(\overline{\vr}, \overline{\vt})}{\partial \vr} \vr^{(1)}+ \frac{\partial p(\overline{\vr}, \overline{\vt})}{\partial \vt} \vt^{(1)}\right) \Div \bm{\varphi} \ \dx \dt =  - \overline{\vr} \int_{0}^{T} \int_{\Omega} \Grad G \cdot \bm{\varphi} \ \dx \dt
		\end{equation*}
		holds for any $\bm{\varphi} \in C_c^{\infty}((0,T) \times \Omega; \mathbb{R}^3)$. We get in particular that
		\begin{equation*}
			\Grad \left( \frac{\partial p(\overline{\vr}, \overline{\vt})}{\partial \vr} \vr^{(1)}+ \frac{\partial p(\overline{\vr}, \overline{\vt})}{\partial \vt} \vt^{(1)} \right) = \overline{ \vr} \Grad G \quad \Rightarrow \quad \frac{\partial p(\overline{\vr}, \overline{\vt})}{\partial \vr} \vr^{(1)}+ \frac{\partial p(\overline{\vr}, \overline{\vt})}{\partial \vt} \vt^{(1)} =  \overline{ \vr} G + f(t).
		\end{equation*}
		If we integrate the previous identity over $(0,\tau) \times \Omega$ for any $\tau \in [0,T]$ we can deduce from \eqref{zero mean potential}, \eqref{integral density zero} and \eqref{integral temperature zero} that $f\equiv 0$. Therefore,
		\begin{equation} \label{rho 1}
			\vr^{(1)} = -A \vt^{(1)} + \frac{\overline{ \vr}}{\partial_{\vr}p(\overline{\vr}, \overline{\vt}) }G,
		\end{equation}
		where $A$ is the constant defined in \eqref{OB 4}.
		
		We can now substitute \eqref{rho 1} into \eqref{C3} and \eqref{C1}; we obtain that condition (ii) of Definition \ref{dissipative solution OB} is satisfied and
		\begin{equation*}
			-\overline{ \vr}c_p \int_{\Omega}  \vt_0^{(1)} \ \varphi(0,\cdot) \ \dx= \int_{0}^{T} \int_{\Omega} \left[ \overline{ \vr} c_p \vt^{(1)} (\DerTime \varphi + \vu \cdot \Grad \varphi) - (\kappa(\overline{ \vt}) \Grad \vt^{(1)} +\overline{ \vt} A G\vu) \cdot \Grad \varphi\right] \dx \dt
		\end{equation*}
		holds for any $\vphi \in C_c^1([0,T) \times \overline{\Omega})$. In particular, we have used the fact that, from Gibb's relation \eqref{Gibb's relation},
		\begin{equation*}
			\frac{\partial s(\overline{\vr}, \overline{\vt})}{\partial \vr} 
			=- \frac{1}{\overline{ \vr}^2} \frac{\partial p(\overline{\vr}, \overline{\vt})}{\partial \vt},
		\end{equation*}
		and, since the initial data $(\vr_0^{(1)}, \vt_{0}^{(1)})$ are well-prepared and satisfy \eqref{well-prepared data}, 
		\begin{equation*}
			c_p \vt_0^{(1)} = \overline{ \vt} \left( \frac{\partial s(\overline{\vr}, \overline{\vt})}{\partial \vr} \vr^{(1)}_0+ \frac{\partial s(\overline{\vr}, \overline{\vt})}{\partial \vt} \vt^{(1)}_0 + a(\overline{\vr}, \overline{\vt}) G \right).
		\end{equation*}
		Hence, $\vt^{(1)}$ satisfies the weak formulation of \eqref{OB 2}.
		
		Next, by interpolation, from \eqref{c3} and \eqref{c4} we can deduce that $\vu\in L^{\frac{10}{3}}(0,T; L^{\frac{10}{3}}(\Omega; \mathbb{R}^3))$, implying that 
		\begin{equation*}
			\vu \cdot \Grad \vt^{(1)} \in L^p(0,T; L^p(\Omega)) \quad \mbox{with} \quad p=\frac{5}{4}.
		\end{equation*}
		Due to the additional assumption \eqref{Slobodeckii condition} on the initial temperature $\vt_0^{(1)}$, we can apply \cite[Theorem 10.22]{FeiNov} to the deduce the regularity class \eqref{regularity class} for $\vt^{(1)}$. Consequently, condition (iii) of Definition \ref{dissipative solution OB} is satisfied.
		
		\textit{Passage to the limit in the energy equality.}  
		We start pursuing the same idea developed in \cite[Section 3.2]{PokSkr}. From the constitutive relation \eqref{conductivity condition}, we can write for any $\delta \in (0,1)$ and any $\psi \in C^1[0,T]$, $\psi\geq 0$
			\begin{equation} \label{e16}
				\begin{aligned}
					\int_{0}^{T} &\int_{\Omega_{\ve}} \kappa(\vt_{ \ve}) | \Grad \ell_{\ve}^{(1)} |^2 \psi \  \dx \dt \\
					=& \ \int_{0}^{T} \int_{\Omega} \widehat{\kappa}_{\ve} | \Grad \widehat{\ell}_{\ve}^{(1)} |^{2-\delta} \psi \ \dx \dt - \kappa(\overline{ \vt}) \int_{0}^{T} \int_{\Omega \setminus \Omega_{\ve}} | \Grad E_{\ve} (\ell_{\ve}^{(1)}) |^{2-\delta} \psi \ \dx \dt\\
					&+ \int_{0}^{T} \int_{\Omega_{\ve}} \kappa(\vt_{ \ve}) \left(| \Grad \ell_{\ve}^{(1)} |^2 - | \Grad \ell_{\ve}^{(1)} |^{2-\delta}\right) \psi \ \dx \dt  \\
					\geq & \  \int_{0}^{T}  \int_{\Omega} \left[ \frac{\kappa(\vt_{\ve})}{\vt_{\ve}^{2-\delta}} \right]_{\rm ess} \big| \Grad \widehat{\vt}_{\ve}^{(1)} \big|^{2-\delta} \psi \ \dx \dt - \kappa(\overline{ \vt}) \int_{0}^{T} \int_{\Omega \setminus \Omega_{\ve}} | \Grad E_{\ve} (\ell_{\ve}^{(1)}) |^{2-\delta} \psi\  \dx \dt\\ 
					& - \overline{\kappa}\int_{0}^{T} \int_{\Omega_{\ve}} (1+\vt_{ \ve}^3) \left( | \Grad \ell_{\ve}^{(1)} |^{2-\delta} -| \Grad \ell_{\ve}^{(1)} |^2 \right) \mathbbm{1}_{\{  |\Grad \ell_{\ve}^{(1)} | \leq 1 \}} \psi \  \dx \dt = \sum_{k=1}^{3} I_{\ve, k}. 
				\end{aligned}
			\end{equation}
			From the properties of the extension operator $E_{\ve} $, we have that 
			\begin{equation}
				|I_{\ve,2}| \leq c(\psi) \| \widehat{\ell}_{\ve}^{(1)} \|_{L^2(0,T; W^{1,2}(\Omega; \mathbb{R}^3 ))} |\Omega \setminus \Omega_{\ve}|^{\frac{\delta}{2}} \lesssim \ve^{\frac{3}{2}\delta(\alpha-1)};
			\end{equation}
			moreover, noticing that from \eqref{c2} by interpolation
			\begin{equation*}
				\{\widehat{\vt}_{\ve}\}_{\ve>0} \mbox{ is uniformly bounded } \in L^p(0,T;L^p(\Omega)) \ \mbox{with } p= \frac{10}{3}
			\end{equation*}
			and that the function $f(z)= (z^{2-\delta} - z^2)^{10}$ attains its maximum on $[0,1]$ at the point $z_{\max} = \left( 1- \frac{\delta}{2}\right)^{\frac{1}{\delta}}$, we deduce that
			\begin{equation}
				|I_{\ve,3}|  \leq c(\psi) \left(1+ \| \widehat{\vt}_{ \ve}\|_{L^p(0,T;L^p(\Omega ))}\right)  I(\delta) \lesssim I(\delta),
			\end{equation}
			with 
			\begin{equation}
				I(\delta):= \frac{\delta}{2} \left(1- \frac{\delta}{2}\right)^{\frac{2}{\delta}-1}.
			\end{equation}
			On the other hand,
			\begin{equation*}
				\vS (\vt_{\ve}, \Grad \vu_{\ve}): \Grad \vu_{\ve} = \frac{\mu(\vt_{\ve})}{2} \left| \Grad \vu_{\ve} + \Grad^{\top} \vu_{\ve} -\frac{2}{3} (\Div \vu_{\ve}) \mathbb{I}\right|^2 + \eta(\vt_{\ve}) |\Div \vu_{\ve}|^2,
			\end{equation*}
			and therefore, for any $\psi \in C^1[0,T]$, $\psi\geq 0$
			\begin{equation} \label{e17}
				\int_{0}^{T} \int_{\Omega} \widehat{\vt}_{ \ve}^{-1} \vS (\widehat{\vt}_{\ve}, \Grad \widetilde{\vu}_{\ve}): \Grad \widetilde{\vu}_{\ve} \ \psi  \ \dx \dt \geq \frac{1}{2} \int_{0}^{T} \int_{\Omega}  \left[ \frac{\mu(\vt_{\ve})}{\vt_{\ve}} \right]_{\rm ess} \left| \Grad \widetilde{\vu}_{\ve} + \Grad^{\top} \widetilde{\vu}_{\ve} -\frac{2}{3} (\Div \widetilde{\vu}_{\ve}) \mathbb{I}\right|^2 \psi \  \dx \dt.
			\end{equation}
			Putting together \eqref{e16}--\eqref{e17}, from \eqref{c2}, \eqref{c3}, \eqref{convergence vt1 2} and the lower semi-continuity of convex functions, for any $\psi \in C^1[0,T]$, $\psi\geq 0$ we have
			\begin{align*}
				&\liminf_{\ve\to 0} \int_{0}^{T} \int_{\Omega_{\ve}} \overline{\vt}\left[ \frac{1}{\vt_{\ve}} \vS(\vt_{\ve}, \Grad \vu_{\ve}): \nabla_x \vu_{\ve} + \kappa(\vt_{\ve}) \left| \nabla_x \ell^{(1)}_{\ve}\right|^2 \right] \psi  \ \dx\dt \\
				&\geq \frac{\mu(\overline{ \vt})}{2} \int_{0}^{T} \int_{\Omega} |\Grad \vu+ \Grad^{\top} \vu |^2 \ \psi \ \dx \dt + 
				\overline{ \vt}^{\delta-1}\kappa(\overline{ \vt}) \int_{0}^{T} \int_{\Omega} |\Grad \vt^{(1)}|^{2-\delta} \ \psi \ \dx \dt- I(\delta).
			\end{align*} 
			Similarly to \cite[Section 3.2]{PokSkr}, if now we let $\delta \to 0$, $I(\delta) \to 0$ and, by Vitali convergence Theorem, we can conclude that 
			\begin{equation*}
				\overline{ \vt}^{\delta-1}\kappa(\overline{ \vt}) \int_{0}^{T} \int_{\Omega} |\Grad \vt^{(1)}|^{2-\delta} \ \psi \ \dx \dt \to \frac{\kappa(\overline{ \vt})}{\overline{ \vt}}\int_{0}^{T} \int_{\Omega} |\Grad \vt^{(1)}|^{2} \ \psi \ \dx \dt.
			\end{equation*}
		Introducing the measure 
		\begin{equation} \label{defect E}
			\begin{aligned}
				\mathfrak{E} &\in L^{\infty}(0,T; \mathcal{M}^+ (\overline{\Omega})), \\
				\textup{d} \mathfrak{E} &:= \frac{1}{2} \overline{ \vr} \left( \overline{|\vU|^2} - |\vU|^2 \right) \dx,
			\end{aligned}
		\end{equation}
		and noticing that 
		\begin{equation*}
			\widetilde{H}_{\overline{\vt}, \ve}- (\widetilde{\vr}_{\ve}-\overline{\vr}) \frac{\partial H_{\overline{\vt}}(\overline{\vr}, \overline{\vt})}{\partial \vr} -H_{\overline{\vt}}(\overline{\vr}, \overline{\vt}) = \begin{cases}
				H_{\overline{ \vt}}(\vr_{ \ve}, \vt_{ \ve})- (\vr_{ \ve}-\overline{\vr}) \frac{\partial H_{\overline{\vt}}(\overline{\vr}, \overline{\vt})}{\partial \vr} -H_{\overline{\vt}}(\overline{\vr}, \overline{\vt}) &\mbox{in } \Omega_{\ve}, \\
				0 &\mbox{in } \Omega \setminus \Omega_{\ve},
			\end{cases}
		\end{equation*}
		we can pass to the limit in \eqref{energy inequality with Helmholtz} and repeat the same passages as in \cite[Section 5.5.4]{FeiNov}, obtaining that the energy inequality 
		\begin{align*}
			&\int_{\Omega} \left[ \frac{1}{2} \overline{ \vr}|\vu|^2+ \frac{1}{2\overline{ \vr}} \frac{\partial p(\overline{ \vr}, \overline{ \vt})}{\partial \vr} \big| \vr^{(1)} \big|^2+ \frac{\overline{ \vr}}{2\overline{ \vt}} \frac{\partial e(\overline{ \vr}, \overline{ \vt})}{\partial \vt} \big| \vt^{(1)} \big|^2 - \vr^{(1)}  G \right] (\tau, \cdot) \dx \\
			+ \frac{\mu(\overline{ \vt})}{2}& \int_{0}^{\tau} \int_{\Omega} |\Grad \vu+ \Grad^{\top} \vu |^2 \ \dx \dt + \frac{\kappa(\overline{ \vt})}{\overline{ \vt}} \int_{0}^{\tau} \int_{\Omega} |\Grad \vt^{(1)} |^2 \ \dx \dt + \int_{\Omega} \textup{d} \mathfrak{E}(\tau) \\
			\leq &\int_{\Omega} \left[ \frac{1}{2} \overline{ \vr}|\vu_{0}|^2+ \frac{1}{2\overline{ \vr}} \frac{\partial p(\overline{ \vr}, \overline{ \vt})}{\partial \vr} \big| \vr_0^{(1)} \big|^2+ \frac{\overline{ \vr}}{2\overline{ \vt}} \frac{\partial e(\overline{ \vr}, \overline{ \vt})}{\partial \vt} \big| \vt_0^{(1)} \big|^2 - \vr_0^{(1)}  G \right] \dx
		\end{align*}
		holds for a.e. $\tau \in (0,T)$. Substituting \eqref{well-prepared data} and \eqref{rho 1}, we get that condition (iv) of Definition \ref{dissipative solution OB} is satisfied. 
		
		Finally, condition (v) of Definition \ref{dissipative solution OB} follows from the fact that $\trace [\vu \otimes \vu] = |\vu|^2 $; this concludes the proof.
	\end{proof}
	
	\section{Weak--strong uniqueness} \label{Weak--strong uniqueness principle}
	
	Our goal in this section is to prove the \textit{weak--strong uniqueness} principle for the target system: if the Oberbeck--Boussinesq approximation admits a strong solution, then it must coincide with the dissipative solution emanating from the same initial data.
	
	We start recalling the following result on the local existence of strong solutions, cf. \cite[Theorem 2.1]{CliGuiRoj}. Notice that in \cite{CliGuiRoj} the authors considered time-periodic solutions with small data; however, the proof, based on Galerkin approximation and uniform bounds, can be adapted to get local existence with large data. We also recall the recent result by Abbatiello and Feireisl \cite{AbbFei} where the existence was proven considering non-local boundary conditions for the temperature.
	
	\begin{theorem}[Existence of strong solutions to the Oberbeck--Boussinesq system] \label{Existence strong solutions OB}
		There exists a positive time $T^{*}$ and a trio of functions
		\begin{align}
			\vU &\in W^{1,2}(0,T^{*}; L^2(\Omega; \mathbb{R}^3)) \cap L^{\infty}(0,T^{*}; W^{1,2}(\Omega; \mathbb{R}^3)) \cap L^2(0,T^{*}; W^{2,2} (\Omega; \mathbb{R}^3)), \label{reg class 1}\\
			\Theta &\in W^{1,2}(0,T^{*}; W^{1,2}(\Omega)) \cap L^{\infty}(0,T^{*}; W^{2,2}(\Omega)) \cap L^2(0,T^{*}; W^{2,3} (\Omega)), \\
			\Pi &\in L^2(0,T; W^{1,2}(\Omega)) \label{reg class 3}
		\end{align}
		satisfying the Oberbeck--Boussinesq system \eqref{OB 1}--\eqref{OB 7} a.e. in $(0,T^{*})\times \Omega$.
	\end{theorem}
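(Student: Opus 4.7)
The strategy follows the standard route for the three–dimensional incompressible Navier--Stokes system coupled with a transport--diffusion equation, adapted to the Boussinesq setting; the local lifespan $T^*$ will be dictated by the onset of blow-up in a Gronwall inequality with super-linear right-hand side.

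My plan is to proceed by a Faedo--Galerkin approximation. I would fix an orthogonal basis $\{\mathbf{w}_k\}_{k \geq 1}$ of $W_0^{1,2}(\Omega;\mathbb{R}^3) \cap \{\Div \mathbf{w} = 0\}$ consisting of eigenfunctions of the Stokes operator, and a basis $\{\psi_k\}_{k \geq 1}$ of $W^{1,2}(\Omega)$ consisting of eigenfunctions of the Neumann Laplacian. Projecting \eqref{OB 3} onto $\mathrm{span}\{\mathbf{w}_1,\dots,\mathbf{w}_n\}$ and \eqref{OB 2} onto $\mathrm{span}\{\psi_1,\dots,\psi_n\}$ produces a coupled system of ODEs for the Fourier coefficients with locally Lipschitz nonlinearity. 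The Picard--Lindel\"of theorem yields approximate solutions $(\vU_n, \Theta_n)$ on some interval $[0, T_n]$, and the incompressibility constraint is built into the basis so that $\Pi$ does not appear at this stage.

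Next I would close uniform estimates. The basic energy identity, obtained by testing the momentum equation with $\vU_n$ and the heat equation with $c_p \Theta_n/\overline{\vt}$, reads
\begin{equation*}
\frac{\mathrm{d}}{\dt}\left(\frac{\overline{\vr}}{2}\|\vU_n\|_{L^2}^2 + \frac{\overline{\vr} c_p}{2\overline{\vt}}\|\Theta_n\|_{L^2}^2\right) + \mu(\overline{\vt})\|\Grad \vU_n\|_{L^2}^2 + \frac{\kappa(\overline{\vt})}{\overline{\vt}}\|\Grad\Theta_n\|_{L^2}^2 = 0,
\end{equation*}
as the cross-terms $-A\Theta_n \Grad G \cdot \vU_n$ and $\overline{\vt} A \Grad G \cdot \vU_n \cdot \Theta_n /\overline{\vt}$ cancel after proper weighting, which is precisely the structural reason the Oberbeck--Boussinesq system dissipates energy. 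To upgrade to $H^1$-regularity, I would further test the momentum equation with the Stokes operator $-\mathbf{P}\Delta \vU_n$ and with $\DerTime \vU_n$, and the heat equation with $-\Delta \Theta_n$. The nonlinear contributions $(\vU_n \cdot \Grad)\vU_n$ and $\vU_n \cdot \Grad \Theta_n$ are controlled, via the three-dimensional interpolation $\|\vU\|_{L^6} \lesssim \|\Grad \vU\|_{L^2}$ together with $\|\vU\|_{L^\infty} \lesssim \|\vU\|_{H^1}^{1/2}\|\vU\|_{H^2}^{1/2}$, to yield an inequality of the form
\begin{equation*}
\frac{\mathrm{d}}{\dt} Y_n(t) + c\bigl(\|\vU_n\|_{H^2}^2 + \|\Theta_n\|_{H^2}^2\bigr) \leq C\bigl(1 + Y_n(t)^q\bigr), \qquad q > 1,
\end{equation*}
where $Y_n := \|\vU_n\|_{H^1}^2 + \|\Theta_n\|_{H^1}^2$. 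A standard comparison argument provides a lifespan $T^* > 0$ depending only on the $H^1$-norm of the initial data on which $Y_n$ stays bounded uniformly in $n$.

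Having $\{\vU_n\}$ bounded in $L^\infty(0,T^*; H^1) \cap L^2(0,T^*; H^2)$ with $\{\DerTime \vU_n\}$ in $L^2(0,T^*; L^2)$, and analogously for $\{\Theta_n\}$, an Aubin--Lions compactness argument delivers strong convergence in $L^2(0,T^*; H^1)$, sufficient to pass to the limit in the quadratic terms $(\vU \cdot \Grad)\vU$ and $\vU \cdot \Grad \Theta$. Once $\vU$ and $\Theta$ are identified, I would bootstrap: the regularity $\vU \in L^\infty_t H^1_x \cap L^2_t H^2_x$ together with the Sobolev embeddings $H^1 \hookrightarrow L^6$ and $H^2 \hookrightarrow W^{1,6}$ yield $\vU \cdot \Grad \Theta \in L^2(0,T^*; L^3(\Omega))$; maximal $L^p$--regularity for the heat equation with homogeneous Neumann condition then lifts $\Theta$ to $L^2(0,T^*; W^{2,3}(\Omega))$, producing the regularity class \eqref{reg class 1}--\eqref{reg class 3}. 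The pressure $\Pi$ is recovered \emph{a posteriori} through the Helmholtz projection applied to \eqref{OB 3}: taking divergence gives a Neumann problem for $\Pi$ with right-hand side in $L^2(0,T^*; L^2)$, so elliptic regularity provides $\Pi \in L^2(0,T^*; W^{1,2})$.

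The main obstacle is closing the $H^1$ estimate uniformly in $n$ on a time interval independent of $n$. The coupling term $-A \Theta \Grad G$ is linear and therefore benign, but the convective contributions force the differential inequality for $Y_n$ to be super-linear, which is the unavoidable source of the finiteness of $T^*$; any refinement of the lifespan requires quantitative interpolation tailored to the particular structure of the Boussinesq coupling and is not needed for our purposes, since \eqref{hypothesis parameters} and the weak--strong uniqueness established in the next section only use the existence of \emph{some} positive $T^*$.
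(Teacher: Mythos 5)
The paper does not actually prove this theorem: it is quoted from \cite[Theorem 2.1]{CliGuiRoj} (time-periodic solutions with small data), with the remark that the Galerkin-plus-uniform-bounds argument there adapts to give local existence for large data; your Faedo--Galerkin scheme with Stokes and Neumann--Laplacian eigenfunctions, the weighted energy identity exploiting the cancellation of the two $A\Grad G$ coupling terms, the superlinear Gronwall inequality fixing $T^*$, Aubin--Lions compactness, and the a posteriori recovery of $\Pi$ through the Helmholtz projection are exactly that strategy, so in approach you are aligned with what the paper invokes.

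There is, however, a gap between what your estimates deliver and the regularity class actually asserted for $\Theta$. Your first- and second-level estimates give $\vU_n$ bounded in $L^{\infty}(0,T^*;W^{1,2})\cap L^2(0,T^*;W^{2,2})$ with $\DerTime\vU_n\in L^2(0,T^*;L^2)$, which matches \eqref{reg class 1}, and for $\Theta_n$ only $L^{\infty}(0,T^*;W^{1,2})\cap L^2(0,T^*;W^{2,2})$ with $\DerTime\Theta_n\in L^2(0,T^*;L^2)$; the maximal-regularity bootstrap with $\vU\cdot\Grad\Theta\in L^2(0,T^*;L^3)$ then adds $\Theta\in L^2(0,T^*;W^{2,3})$. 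This falls short of the stated $\Theta\in L^{\infty}(0,T^*;W^{2,2}(\Omega))$ and $\DerTime\Theta\in L^2(0,T^*;W^{1,2}(\Omega))$. To reach those you need a further tier of parabolic estimates on the Galerkin level (e.g. testing the heat equation with $\Delta_x^2\Theta_n$, or differentiating it in time and testing with $\DerTime\Theta_n$ and $-\Delta_x\DerTime\Theta_n$), which is closable thanks to $\vU_n\in L^2(0,T^*;W^{2,2})$ and $\DerTime\vU_n\in L^2(0,T^*;L^2)$, but requires the initial temperature to lie in $W^{2,2}(\Omega)$ compatibly with the Neumann condition -- an assumption you never state (the theorem leaves the data implicit, so it should be made explicit in your argument). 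Without this extra round, the class \eqref{reg class 1}--\eqref{reg class 3} is not fully established; the rest of the proposal (energy cancellation, lifespan argument, passage to the limit in the quadratic terms, and the elliptic recovery of $\Pi\in L^2(0,T;W^{1,2})$) is sound.
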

	
	\begin{theorem} [Weak--strong uniqueness principle] \label{Weak-strong uniqueness}
		Let $[\vU, \Theta, \Pi]$ be a strong solution of the Oberbeck-Boussinesq system \eqref{OB 1}--\eqref{OB 7} on $[0,T^*]$, the existence of which is guaranteed by Theorem \ref{Existence strong solutions OB}. Let $[\vu, \vartheta^{(1)}]$ be a dissipative solution of the same system with dissipation defects $\mathfrak{R}, \mathfrak{E}$ in the sense of Definition \ref{dissipative solution OB}. If 
		\begin{equation} \label{same initial data}
			[\vU(0,x), \Theta(0,x)] = [\vu(0,x), \vt^{(1)}(0,x)] \quad \mbox{for a.e. } x\in \Omega 
		\end{equation}
		then $\mathfrak{R}\equiv \mathfrak{E} \equiv 0$ and 
		\begin{equation} \label{a.e. equality}
			[\vU(t,x), \Theta(t,x)] = [\vu(t,x), \vt^{(1)}(t,x)] \quad \mbox{for a.e. } (t,x)\in (0,T^*) \times \Omega.
		\end{equation}
	\end{theorem}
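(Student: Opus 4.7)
The plan is to apply the relative energy (or weak-strong uniqueness) method. I introduce the relative energy functional
\begin{equation*}
\mathcal{E}_{\rm rel}(\tau) := \int_{\Omega} \left( \frac{\overline{\vr}}{2}|\vu - \vU|^2 + \frac{c_p \overline{\vr}}{2\overline{\vt}}|\vt^{(1)} - \Theta|^2 \right)(\tau, \cdot) \, \dx + \int_{\overline{\Omega}} \textup{d}\mathfrak{E}(\tau),
\end{equation*}
and aim to derive a Gronwall-type inequality of the form $\mathcal{E}_{\rm rel}(\tau) \lesssim \int_0^\tau h(t)\,\mathcal{E}_{\rm rel}(t)\,\dt$ with $\mathcal{E}_{\rm rel}(0) = 0$, the latter being guaranteed by \eqref{same initial data}. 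Combined with the compatibility condition \eqref{compatibility condition}, this forces $\mathfrak{E} \equiv \mathfrak{R} \equiv 0$ and \eqref{a.e. equality}.

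First I would expand the quadratic terms in $\mathcal{E}_{\rm rel}$ by the binomial formula. The pure dissipative-solution contributions $\frac{\overline{\vr}}{2}|\vu|^2$, $\frac{c_p\overline{\vr}}{2\overline{\vt}}|\vt^{(1)}|^2$ and $\int \textup{d}\mathfrak{E}$ are bounded from above through the energy inequality \eqref{DF OB 2}, while the pure strong-solution contributions $\frac{\overline{\vr}}{2}|\vU|^2$ and $\frac{c_p\overline{\vr}}{2\overline{\vt}}|\Theta|^2$ are computed exactly by testing \eqref{OB 3} against $\vU$ and \eqref{OB 2} against $\Theta$, which is legitimate thanks to the regularity class \eqref{reg class 1}--\eqref{reg class 3}. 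The mixed terms $\overline{\vr}\int_\Omega \vu \cdot \vU\,\dx$ and $\frac{c_p\overline{\vr}}{\overline{\vt}}\int_\Omega \vt^{(1)}\Theta\,\dx$ are reproduced by using $\vU$ as a divergence-free, boundary-vanishing test function in \eqref{DF OB 1} and by multiplying the pointwise heat identity for $[\vu, \vt^{(1)}]$ from item (iii) of Definition \ref{dissipative solution OB} by $\Theta$, combined with its strong counterpart multiplied by $\vt^{(1)}$.

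Collecting all contributions, after systematic cancellations one arrives at a relative energy inequality
\begin{equation*}
\mathcal{E}_{\rm rel}(\tau) + \int_0^\tau\!\! \int_\Omega \left( \frac{\mu(\overline{\vt})}{2}|\Grad(\vu-\vU)+\Grad^{\top}(\vu-\vU)|^2 + \frac{\kappa(\overline{\vt})}{\overline{\vt}}|\Grad(\vt^{(1)}-\Theta)|^2 \right) \dx \dt \leq \mathcal{R}(\tau),
\end{equation*}
where the remainder $\mathcal{R}(\tau)$ collects: (i) quadratic transport terms of the form $\overline{\vr}\int_0^\tau\!\int_\Omega (\vu-\vU)\otimes(\vu-\vU):\Grad\vU\,\dx\dt$ and $\overline{\vr}c_p\int_0^\tau\!\int_\Omega (\vu-\vU)\cdot\Grad\Theta\,(\vt^{(1)}-\Theta)\,\dx\dt$; (ii) a linear buoyancy coupling $A\int_0^\tau\!\int_\Omega (\vt^{(1)}-\Theta)\Grad G \cdot(\vU-\vu)\,\dx\dt$; and (iii) the defect contribution $\int_0^\tau\!\int_{\overline{\Omega}}\Grad\vU : \textup{d}\mathfrak{R}\,\dt$ inherited from the weak momentum equation. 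Terms (i) and (ii) are controlled by Young's inequality, absorbing the gradient differences of $\vu-\vU$ and $\vt^{(1)}-\Theta$ into the dissipation terms on the left-hand side and leaving multipliers like $\|\Grad\vU\|_{L^\infty(\Omega)}$ and $\|\Grad\Theta\|_{L^\infty(\Omega)}$, which lie in $L^1(0,T^*)$ thanks to the Sobolev embeddings applied to \eqref{reg class 1}--\eqref{reg class 3}.

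The main obstacle is term (iii), since $\mathfrak{R}$ is an abstract tensor-valued measure with no explicit expression in terms of $[\vu,\vt^{(1)}]$. Here the compatibility condition \eqref{compatibility condition} is indispensable: exploiting the positivity of $\mathfrak{R}$ one gets the pointwise estimate $|\Grad\vU : \textup{d}\mathfrak{R}| \leq \|\Grad\vU\|_{L^\infty(\Omega)}\,\textup{d}\trace[\mathfrak{R}]$, and then \eqref{compatibility condition} yields
\begin{equation*}
\left|\int_{\overline{\Omega}}\Grad\vU : \textup{d}\mathfrak{R}\right| \lesssim \|\Grad\vU\|_{L^\infty(\Omega)}\int_{\overline{\Omega}} \textup{d}\mathfrak{E},
\end{equation*}
which is already part of $\mathcal{E}_{\rm rel}$. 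Once this bound is in place, Gronwall's lemma together with $\mathcal{E}_{\rm rel}(0)=0$ closes the argument, yielding $\mathcal{E}_{\rm rel}\equiv 0$ on $[0,T^*]$; the positivity of $\mathfrak{E}$ then gives $\mathfrak{E}\equiv 0$, the compatibility condition forces $\mathfrak{R}\equiv 0$, and the coincidence \eqref{a.e. equality} of the two solutions follows.
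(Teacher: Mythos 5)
Your proposal follows essentially the same relative-energy argument as the paper: test the weak formulations \eqref{DF OB 1}, \eqref{OB 2} with the strong solution, combine with the energy inequality \eqref{DF OB 2}, control the defect term $\int \Grad \vU : \textup{d}\mathfrak{R}$ through the compatibility condition $\trace[\mathfrak{R}] \simeq \mathfrak{E}$, and close by Gronwall using \eqref{same initial data}. The minor discrepancies — your leftover buoyancy coupling $A\int(\vt^{(1)}-\Theta)\Grad G\cdot(\vU-\vu)$, which in the paper's bookkeeping cancels once the strong equations are substituted (and would in any case be absorbed by Gronwall since $G \in W^{1,\infty}$), and the slightly optimistic appeal to $\|\Grad\vU\|_{L^\infty(\Omega)}$, $\|\Grad\Theta\|_{L^\infty(\Omega)}$ rather than interpolated norms compatible with \eqref{reg class 1}--\eqref{reg class 3} — are harmless and do not change the argument.
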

	\begin{proof}
		Let us define 
		\begin{equation*}
			E(\vu, \vt^{(1)} \ | \ \vU, \Theta):= \frac{1}{2} \left( \overline{ \vr}|\vu -\vU|^2 + \frac{\overline{ \vr}}{\overline{ \vt}} c_p | \vt^{(1)}- \Theta|^2\right)
		\end{equation*}
		and for any $\tau \in [0,T^*]$ the spatial integral of it, known as \textit{relative energy functional}, 
		\begin{equation*}
			\begin{aligned}
				\mathcal{E}(\vu, \vt^{(1)} \ | \ \vU, \Theta)(\tau)&:= \int_{\Omega} E(\vu, \vt^{(1)} \ | \ \vU, \Theta)(\tau, \cdot) \ \dx \\
				&= \frac{1}{2} \int_{\Omega} \left( \overline{ \vr}|\vu -\vU|^2 + \frac{\overline{ \vr}}{\overline{ \vt}} c_p | \vt^{(1)}- \Theta|^2\right) (\tau, \cdot ) \ \dx.
			\end{aligned}
		\end{equation*}
		Clearly, $\mathcal{E}(\vu, \vt^{(1)} \ | \ \vU, \Theta)(\tau) \geq 0$  for any $\tau \in [0,T^*]$ and the equality holds if and only if \eqref{a.e. equality} holds. Therefore, it is enough to show that 
		\begin{equation} \label{zero energy}
			\mathfrak{E} \equiv 0, \quad \mathcal{E}(\vu, \vt^{(1)} \ | \ \vU, \Theta) \equiv 0 \quad \mbox{a.e. in } (0,T^*). 
		\end{equation}
		
		Let us at first suppose that $[\vU, \Theta, \Pi]$ are smooth and compactly supported functions such that $\vU|_{\partial \Omega}=0$ and $\Div \vU=0$. Then, $\bm{\varphi}= \vU$ can be used as test function in the weak formulation \eqref{DF OB 1}, obtaining 
		\begin{equation} \label{Id 1}
			\begin{aligned}
				\overline{\vr}\left[\int_{\Omega} (\vu \cdot \vU) (t, \cdot) \ \dx\right]_{t=0}^{t=\tau} &= \overline{\vr} \int_{0}^{\tau} \int_{\Omega}   \left( \vu \cdot \DerTime \vU + (\vu\otimes \vu): \Grad \vU \right) \dx \dt \\
				&- 2\mu(\overline{ \vt}) \int_{0}^{\tau} \int_{\Omega} \mathbb{D}_x \vu : \mathbb{D}_x \vU \ \dx \dt - A\int_{0}^{\tau} \int_{\Omega} \vt^{(1)}\Grad G \cdot \vU \ \dx \dt \\
				&+ \int_{0}^{\tau} \int_{\overline{\Omega}} \Grad \vU: \textup{d} \mathfrak{R} \ \dt, 
			\end{aligned}
		\end{equation}
		where we have introduced symmetric velocity gradient, defined as
		\begin{equation*}
			\mathbb{D}_x \vv = \frac{\Grad \vv + \Grad^{\top} \vv}{2}.
		\end{equation*}
		Similarly, $\varphi = \Theta$ can be used as test function in the weak formulation of \eqref{OB 2}, obtaining 
		\begin{equation} \label{Id 2}
			\begin{aligned}
				\frac{\overline{\vr}}{\overline{ \vt}} c_p\left[\int_{\Omega}( \vt^{(1)}  \Theta )(t, \cdot) \ \dx\right]_{t=0}^{t=\tau} &= \frac{\overline{\vr}}{\overline{ \vt}} c_p \int_{0}^{\tau} \int_{\Omega} \vt^{(1)} \left( \DerTime \Theta +  \vu \cdot  \Grad \Theta \right) \dx \dt \\
				&- \frac{\kappa(\overline{ \vt})}{\overline{ \vt}} \int_{0}^{\tau} \int_{\Omega} \Grad \vt^{(1)} \cdot \Grad \Theta \ \dx \dt + A\int_{0}^{\tau} \int_{\Omega} \Theta\Grad G \cdot \vu \ \dx \dt . 
			\end{aligned}
		\end{equation}
		Moreover, using $\varphi = |\vU|^2, |\Theta|^2$ as test function in the weak formulation of the incompressibility condition \eqref{OB 1}, we have the following identity
		\begin{equation} \label{Id 3}
			\begin{aligned}
				\frac{1}{2} \left[ \int_{\Omega} \left( \overline{ \vr}|\vU|^2 + \frac{\overline{ \vr}}{\overline{ \vt}} c_p | \Theta|^2\right)(t,\cdot) \right]_{t=0}^{t=\tau} &= \overline{ \vr} \int_{0}^{\tau} \int_{\Omega}\left[\vU \cdot \DerTime \vU + (\vu \cdot \Grad) \vU \cdot \vU \right] \ \dx \dt \\
				&+ \frac{\overline{ \vr}}{\overline{ \vt}} c_p \int_{0}^{\tau} \int_{\Omega} \Theta (\DerTime \Theta + \vu \cdot \Grad \Theta)\ \dx \dt.
			\end{aligned}
		\end{equation}
		We can now subtract \eqref{Id 1}, \eqref{Id 2} and sum \eqref{Id 3} to the energy inequality \eqref{DF OB 2}, obtaining
		\begin{equation*} 
			\begin{aligned}
				&\left[ \mathcal{E}(\vu, \vt^{(1)} \ | \ \vU, \Theta) (t) \right]_{t=0}^{t=\tau} + \int_{\overline{\Omega}} \textup{d} \mathfrak{E}(\tau) \\
				&\quad + 2\mu(\overline{ \vt}) \int_{0}^{\tau} \int_{\Omega} \mathbb{D}_x \vu : \mathbb{D}_x (\vu -\vU) \ \dx \dt + \frac{\kappa(\overline{ \vt})}{\overline{ \vt}} \int_{0}^{\tau} \int_{\Omega} \Grad \vt^{(1)} \cdot \Grad (\vt^{(1)} - \Theta) \ \dx \dt \\
				& \leq - \int_{0}^{\tau} \int_{\Omega} (\vu-\vU) \cdot \left( \overline{ \vr} [\DerTime \vU + (\vU \cdot \Grad ) \vU]+ A \Theta \Grad G\right) \dx \dt \\
				& \quad - \frac{1}{\overline{ \vt}} \int_{0}^{\tau} \int_{\Omega} (\vt^{(1)}- \Theta) \left[ \overline{ \vr} c_p (\DerTime \Theta +  \vU \cdot \Grad \Theta) - \overline{ \vt} A \Grad G \cdot \vU \right] \dx \dt \\
				&\quad - \overline{ \vr} \int_{0}^{\tau} \int_{\Omega} \left[(\vu-\vU) \cdot \Grad  \right] \vU \cdot (\vu -\vU) \ \dx \dt - \frac{\overline{\vr}}{\overline{ \vt}} c_p \int_{0}^{\tau} \int_{\Omega}  (\vt^{(1)}- \Theta)  \Grad \Theta \cdot (\vu -\vU) \ \dx \dt \\
				& \quad - \int_{0}^{\tau} \int_{\overline{\Omega}} \Grad \vU: \textup{d} \mathfrak{R} \ \dt.
			\end{aligned}
		\end{equation*}
		Next, we add to the previous inequality the following vanishing integrals,
		\begin{align*}
			\mu(\overline{ \vt}) &\int_{0}^{\tau} \int_{\Omega} \left[(\vu-\vU) \cdot \Delta_x \vU  + 2\mathbb{D}_x(\vu-\vU) : \mathbb{D}_x \vU\right]  \dx \dt, \\
			\frac{\kappa(\overline{ \vt})}{\overline{ \vt}} &\int_{0}^{\tau} \int_{\Omega} \left[(\vt^{(1)}-\Theta) \Delta_x \Theta  + \Grad(\vt^{(1)}-\Theta) \cdot \Grad \Theta \right]  \dx \dt, \\
			&\int_{0}^{\tau} \int_{\Omega} (\vu-\vU) \cdot \Grad \Pi  \ \dx \dt,
		\end{align*}
		getting finally the \textit{relative energy inequality}, 
		\begin{equation} \label{relative energy inequality}
			\begin{aligned}
				&\left[ \mathcal{E}(\vu, \vt^{(1)} \ | \ \vU, \Theta) (t) \right]_{t=0}^{t=\tau} + \int_{\overline{\Omega}} \textup{d} \mathfrak{E}(\tau) \\
				&\quad +2\mu(\overline{ \vt}) \int_{0}^{\tau} \int_{\Omega} | \mathbb{D}_x (\vu - \vU)|^2 \ \dx \dt + \frac{\kappa(\overline{ \vt})}{\overline{ \vt}} \int_{0}^{\tau} \int_{\Omega} |\Grad (\vt^{(1)} - \Theta)|^2 \ \dx \dt \\
				& \leq - \int_{0}^{\tau} \int_{\Omega} (\vu-\vU) \cdot \left( \overline{ \vr} [\DerTime \vU + (\vU \cdot \Grad) \vU] + \Grad \Pi - \mu(\overline{ \vt}) \Delta_x \vU + A \Theta \Grad G\right) \dx \dt \\
				& \quad - \frac{1}{\overline{ \vt}} \int_{0}^{\tau} \int_{\Omega} (\vt^{(1)}- \Theta) \left[ \overline{ \vr} c_p (\DerTime \Theta + \vU \cdot \Grad \Theta ) - \kappa(\overline{ \vt}) \Delta_x \Theta - \overline{ \vt} A \Grad G \cdot \vU \right] \dx \dt \\
				&\quad - \overline{ \vr} \int_{0}^{\tau} \int_{\Omega}  \left[(\vu-\vU) \cdot \Grad  \right] \vU \cdot (\vu -\vU) \ \dx \dt - \frac{\overline{\vr}}{\overline{ \vt}} c_p \int_{0}^{\tau} \int_{\Omega}  (\vt^{(1)}- \Theta)  \Grad \Theta \cdot (\vu -\vU) \ \dx \dt \\
				& \quad - \int_{0}^{\tau} \int_{\overline{\Omega}} \Grad \vU: \textup{d} \mathfrak{R} \ \dt.
			\end{aligned}
		\end{equation}
		
		The class of functions $[\vU, \Theta, \Pi]$ satisfying the relative energy inequality can be enlarged by a density argument, as long as all the involved integrals remain well-defined. In particular \eqref{relative energy inequality} holds for $[\vU, \Theta, \Pi]$ belonging to the regularity classes defined in \eqref{reg class 1}--\eqref{reg class 3}.
		
		If we additionally suppose that $[\vU, \Theta, \Pi]$ is a strong solution of \eqref{OB 1}--\eqref{OB 7} satisfying \eqref{same initial data}, we get that $\mathcal{E}(\vu, \vt^{(1)} \ | \ \vU, \Theta) (0)$ and the first two integrals on the right-hand side of \eqref{relative energy inequality} vanish; in particular, \eqref{relative energy inequality} reduces to
		\begin{equation} \label{REI}
			\begin{aligned}
				& \mathcal{E}(\vu, \vt^{(1)} \ | \ \vU, \Theta) (\tau)  + \int_{\overline{\Omega}} \textup{d} \mathfrak{E}(\tau) \\
				&\quad + 2\mu(\overline{ \vt}) \int_{0}^{\tau} \int_{\Omega} | \mathbb{D}_x (\vu - \vU)|^2 \ \dx \dt + \frac{\kappa(\overline{ \vt})}{\overline{ \vt}} \int_{0}^{\tau} \int_{\Omega} |\Grad (\vt^{(1)} - \Theta)|^2 \ \dx \dt \\
				& \leq - \overline{ \vr} \int_{0}^{\tau} \int_{\Omega}  [(\vu-\vU) \otimes (\vu -\vU)] : \Grad \vU \ \dx \dt - \frac{\overline{\vr}}{\overline{ \vt}} c_p \int_{0}^{\tau} \int_{\Omega} (\vt^{(1)}- \Theta)  \Grad \Theta \cdot (\vu -\vU)  \ \dx \dt \\
				& \quad - \int_{0}^{\tau} \int_{\overline{\Omega}} \Grad \vU: \textup{d} \mathfrak{R} \ \dt,
			\end{aligned}
		\end{equation}
		for a.e $\tau \in (0,T)$. Clearly, 
		\begin{equation*}
			\overline{ \vr} | (\vu-\vU) \otimes (\vu -\vU) | \lesssim \frac{1}{2} \overline{ \vr} \trace [(\vu-\vU) \otimes (\vu -\vU)] = \frac{1}{2} \overline{ \vr}  |\vu -\vU|^2,
		\end{equation*}
		\begin{equation*}
			\frac{\overline{\vr}}{\overline{ \vt}} c_p| (\vu -\vU) (\vt^{(1)}- \Theta) | \lesssim \frac{1}{2} \overline{ \vr}  |\vu -\vU|^2 + \frac{1}{2} 	\frac{\overline{\vr}}{\overline{ \vt}} c_p |\vt^{(1)}- \Theta|^2,
		\end{equation*}
		\begin{equation*}
			|\mathfrak{R}| \lesssim \trace [\mathfrak{R}] \lesssim \mathfrak{E},
		\end{equation*}
		where in the last inequality we have used the compatibility condition \eqref{compatibility condition}. Therefore, for a.e. $\tau \in (0,T)$ we obtain 
		\begin{equation*}
			\mathcal{E}(\vu, \vt^{(1)} \ | \ \vU, \Theta) (\tau)  + \int_{\overline{\Omega}} \textup{d} \mathfrak{E}(\tau) \leq c(\Grad \vU, \Grad \Theta) \int_{0}^{\tau} \left( \mathcal{E}(\vu, \vt^{(1)} \ | \ \vU, \Theta)(t)  + \int_{\overline{\Omega}} \textup{d} \mathfrak{E}(t)  \right) \dt.
		\end{equation*}
		Applying the Gronwall argument, we recover in particular that for a.e. $\tau \in (0,T)$
		\begin{equation*}
			\mathcal{E}(\vu, \vt^{(1)} \ | \ \vU, \Theta) (\tau)  + \int_{\overline{\Omega}} \textup{d} \mathfrak{E}(\tau) \leq 0.
		\end{equation*}
		Since the left-hand side of the previous inequality is the sum of two non-negative quantities, the only possibility is that \eqref{zero energy} holds; we get the claim.
	\end{proof}
	
	\subsection{Proof of Theorem \ref{main theorem}}
	
	In Proposition \ref{First result}, we have proven that, passing to suitable subsequences as the case may be,
	\begin{equation}
		[\vuex, \vtex^{(1)}] \rightharpoonup [\vu, \vt^{(1)}] \quad \mbox{in } L^2(0,T; W^{1,2}(\Omega; \mathbb{R}^4)),
	\end{equation}
	where $[\vu, \vt^{(1)}]$ is a dissipative solution to the Oberbeck-Boussinesq system in the sense of Definition \ref{dissipative solution OB} with dissipation defects $\mathfrak{R}$, $\mathfrak{E}$ defined by \eqref{defect R}, \eqref{defect E}, respectively. From the fact that $[\vU_0, \Theta_0]= [\vu_0, \theta_0^{(1)}]$, Theorem \ref{main theorem} is therefore a straightforward corollary of Theorem \ref{Weak-strong uniqueness}.

\end{document}